\DeclareMathOperator{\rank}{rank}
\newcommand{\F}{\mathbb{F}}
\newcommand{\Z}{\mathbb{Z}}
\newcommand{\C}{\mathbb{C}}
\newcommand{\PP}{\mathbb{P}}
\newcommand{\Q}{\mathbb{Q}}
\newcommand{\calA}{\mathcal{A}}
\newcommand{\calF}{\mathcal{F}}
\newcommand{\calG}{\mathcal{G}}
\newcommand{\w}{\omega}
\newcommand{\iso}{\cong}
\newcommand{\Sp}{\operatorname{sp}}
\newcommand{\lk}{\operatorname{lk}}
\newcommand{\slfrac}[2]{\left.#1\middle/#2\right.}
\newtheorem*{rep@theorem}{\rep@title}
\newcommand{\newreptheorem}[2]{%
\newenvironment{rep#1}[1]{%
 \def\rep@title{#2 \ref{##1}}%
 \begin{rep@theorem}}%
 {\end{rep@theorem}}}
\newtheorem{theorem}{Theorem}[section]
\newtheorem{lemma}[theorem]{Lemma}
\newtheorem{question}[theorem]{Question}
\newtheorem{proposition}[theorem]{Proposition}
\newtheorem{definition}[theorem]{Definition}
\newtheorem{corollary}[theorem]{Corollary}
\title{A Link Splitting Spectral Sequence in Khovanov Homology}
\author[Joshua Batson]{Joshua Batson}
\address{Department of Mathematics \\ Massachusetts Institute of Technology}
\email{joshua@math.mit.edu}
\author[Cotton Seed]{Cotton Seed}
\address{Department of Mathematics \\ Princeton University}
\email{cseed@math.princeton.edu}
\begin{document}
\begin{abstract} 
We construct a new spectral sequence beginning at the Khovanov
homology of a link and converging to the Khovanov homology of the
disjoint union of its components. The page at which the sequence
collapses gives a lower bound on the splitting number of the link, the
minimum number of times its components must be passed through one
another in order to completely separate them.  In addition, we build
on work of Kronheimer-Mrowka and Hedden-Ni to show that Khovanov
homology detects the unlink.

\end{abstract}

\maketitle

\tableofcontents

\section{Introduction}
\label{sec:intro}
	Quantum invariants of knots and $3$-manifolds have been an active area
of study since the discovery of the Jones polynomial in 1984 \cite{jones}.  Despite the
attention they have received, the geometric and topological content
of these invariants is not well understood. A promising turn was the discovery by Khovanov \cite{khovanov} of a homology theory
assigning bigraded groups to a link $L$ in $S^3$; the Jones polynomial
can be recovered as their graded Euler characteristic. 
Rasmussen used a spectral sequence from Khovanov homology to 
Lee homology to construct the
$s$-invariant, a lower bound on the slice genus of a knot \cite{ras}.

In this paper we use Khovanov homology to bound a simple topological
invariant of a link, its splitting number. Roughly, the splitting
number of a link is the minimum number of times its components must be
passed through one another in order to completely separate them.  For
example, the two-component link in Figure~\ref{boundexfig} can be
split into an unknot and a trefoil by changing three crossings. We
show that this is best possible.

Our bound comes from a new spectral sequence beginning at the Khovanov
homology of a link and converging to the Khovanov homology of the disjoint union of its
components. 

\begin{theorem}\label{SseqThm}
Let $L$ be a link and $R$ a ring. Choose weights $w_c \in R$ for each component $c$ of $L$. Then
there is a spectral sequence with pages $E_k(L,w)$, and
\[E_1(L,w) \cong Kh(L;R).\]
If the difference $w_c-w_d$ is invertible in $R$ 
for each pair of components $c$ and $d$ with distinct weights,
 then the spectral sequence converges to
\[Kh \left (\coprod_{r\in R} L^{(r)};R \right),\]
where $L^{(r)}$ denotes the sub-link of $L$ consisting of those components
with weight $r$.
\end{theorem}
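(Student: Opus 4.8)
The plan is to realize the spectral sequence as that of a filtered deformation of the Khovanov complex. Fix a diagram $D$ for $L$ and let $CKh(D)$ be its Khovanov complex, built from the cube of resolutions with the Frobenius algebra $A = R[X]/(X^2)$. On the same underlying module I would define a new differential $d_w = d_0 + d_1 + d_2 + \cdots$, where $d_0$ is the usual Khovanov differential and each $d_i$ raises by $i$ an auxiliary filtration degree $\FF$ (for instance a rescaling of the quantum grading). The weights enter only through the higher terms: at a saddle joining strands of components $c$ and $d$, the edge map of $d_w$ is the Khovanov saddle together with a correction proportional to $w_c - w_d$ (and to a dotted cobordism), so the correction vanishes whenever the two strands lie on equally weighted components. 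Since $d_0$ is exactly the Khovanov differential, the $E_0$ page is the associated graded $\operatorname{gr} CKh(D)$ with its induced differential, and
\[ E_1(L,w) = H(CKh(D), d_0) = Kh(L;R), \]
which is the first assertion. For a finite diagram the filtration is bounded, so the spectral sequence converges to $H(CKh(D), d_w)$, and the whole problem reduces to identifying this total homology.

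Before that I would check the two structural facts that make $d_w$ and its spectral sequence well defined. First, $d_w^2 = 0$: expanding gives the system $\sum_{i+j=n} d_i d_j = 0$, whose $n=0$ term is the known $d_0^2 = 0$ and whose higher terms form a finite list of local identities on the cube (equivalently, working in Bar-Natan's cobordism category, the statement that the deformed evaluation respects neck-cutting and the sphere and dot relations). I expect this to be a direct, if slightly involved, local verification. Second, invariance: I would show that each Reidemeister move induces a filtered homotopy equivalence of deformed complexes, exactly as in the undeformed and Lee cases, so that every page $E_k$ with $k \ge 1$ is an invariant of the weighted link.

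The heart of the argument is the computation of $H(CKh(D), d_w)$ under the hypothesis that $w_c - w_d$ is invertible whenever $c$ and $d$ carry distinct weights. I would isolate this as a \emph{splitting lemma}: if a crossing $x$ of $D$ is formed by strands of two components of distinct weight, then $(CKh(D), d_w)$ is filtered-homotopy-equivalent to the deformed complex of the diagram in which $x$ has been removed and its two strands separated. Locally the complex near $x$ is the mapping cone of the deformed saddle map between the two resolutions; after delooping, the correction term supplies a matrix entry equal to an invertible unit times $w_c - w_d$, and Gaussian elimination on this entry cancels the crossing and produces the uncrossed complex up to homotopy. This is the step I expect to be the main obstacle, both in carrying out the cancellation cleanly and in checking that it respects the filtration, so that the induced maps on all later pages are genuine isomorphisms.

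Granting the splitting lemma, I would finish by induction on the number of crossings between differently weighted components. Applying the lemma once to each such crossing separates the weight classes, leaving a split diagram that is the disjoint union of sub-diagrams $D^{(r)}$, one for each occurring weight $r$, so that $(CKh(D), d_w) \simeq \bigotimes_r (CKh(D^{(r)}), d_w)$. Within a single class all components share one weight, every correction term carries the factor $w_c - w_d = 0$, and $d_w$ collapses to the ordinary Khovanov differential; hence $(CKh(D^{(r)}), d_w)$ computes $Kh(L^{(r)};R)$. Taking the tensor product over the weight values, and using that Khovanov homology of a disjoint union is the tensor product of its factors, yields
\[ H(CKh(D), d_w) \cong Kh\!\left( \coprod_{r \in R} L^{(r)}; R \right), \]
which identifies the $E_\infty$ page and completes the proof.
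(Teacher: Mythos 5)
Your overall skeleton (deform the Khovanov differential, identify $E_1$, check Reidemeister invariance, then compute the total homology by reducing to a split diagram) matches the paper's strategy, but both load-bearing ingredients are wrong. First, the deformation itself. You add \emph{forward} corrections: a dotted saddle scaled by $w_c-w_d$ along each mixed edge of the cube. Such a deformation is gauge-trivial in examples where the theorem has content. Writing the dotted saddle as $(1+\lambda_i X)\circ\calA(I,J)$, with $X$ multiplication by $x$ on the active circle, one can seek vertex automorphisms $\phi_I=\prod_{z\in I}(1+\mu_z X_z)$ intertwining the deformed and undeformed complexes; the required equations are $\mu_f=\mu_a+\mu_b+\lambda_i$ at a merge $a,b\to f$ and $\mu_a+\mu_b=\mu_f+\lambda_i$ at a split $f\to a,b$, and this system is solvable for the Hopf link and for the $(2,4)$-torus link (for the latter, set $\mu_{[i,j]}=h(j)-h(i)+\sum_{l\in[i,j]}\lambda_l$ on the segment circles). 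Hence for $T(2,4)$ your total homology is $Kh(T(2,4);\Q)$, of rank $6$, not $Kh(U^2;\Q)$ of rank $4$, so your complex cannot converge to the split link's homology. The paper's $d_1$ is a different kind of object: it is the \emph{reverse} saddle $\calA(J,I)$, going backward along cube edges and lowering homological degree, weighted by $s(i)(w^i_\text{over}-w^i_\text{under})$ for a global sign assignment $s$. That deformation cannot be conjugated away, and $d^2=0$ is then a genuinely nonlocal fact: the diagonal terms of $d^2$ cancel only because $\delta s=\beta$ as cochains on the $2$-sphere. Your deferred ``finite list of local identities'' reflects that your deformation is of the local (and here trivializable) type; the paper even emphasizes in its symplectic section that for its $d$, ``$d^2=0$ for nonlocal reasons.''

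Second, your splitting lemma is false for \emph{any} deformation whose $E_1$ page is $Kh$, so no repair of the first point can save it. A filtered homotopy equivalence between the deformed complex of $D$ and that of the crossing-removed diagram would induce an isomorphism of $E_1$ pages, i.e. $Kh(L)\cong Kh(L_{\mathrm{split}})$ up to an overall shift; the Hopf link already rules this out, since $Kh(\text{Hopf})$ has rank $(1,1,1,1)$ in four consecutive $g$-degrees while $Kh(U^2)$ has ranks $(1,2,1)$. Mechanically, the cancellation you invoke cannot start: every differential in your complex points forward in the cube, and a forward saddle between the two resolutions of a crossing is never invertible (the circle counts differ, and dotted entries are nilpotent because $x^2=0$), so delooping produces no unit entry between the $0$- and $1$-resolution summands. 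The paper threads this needle by proving something deliberately weaker (its Proposition~\ref{CrossingChangeProp}): a crossing \emph{change}, not a removal, gives an isomorphism of relatively $\ell$-graded complexes --- explicitly \emph{not} filtered, which is the whole point, since the failure of filtration-preservation is what makes the collapse page a nontrivial bound on the splitting number. There the maps $f,f'$ satisfy $ff'=f'f=s(i)(w^i_\text{over}-w^i_\text{under})$, and invertibility of the weight difference enters exactly at this step; the proof then layers the weight classes by crossing changes, separates them by Reidemeister moves, and observes that on the resulting split diagram $d_1=0$, so the total homology is $Kh$ of the disjoint union.
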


Each choice of weights for a link $L$ gives a lower bound on the splitting number.

\begin{theorem}\label{SplittingNumThm}
  Let $L$ be a link and let $w_c\in R$ be a set of component weights such
  that $w_c - w_d$ is invertible for each pair of components $c$ and $d$. Let $b(L,
  w)$ be largest $k$ such that $E_k(L, w) \neq E_\infty(L, w)$.
  Then $b(L, w)\le \Sp(L)$.
\end{theorem}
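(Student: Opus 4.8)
The strategy is to compare the spectral sequences of diagrams that differ by a single crossing change and to induct along a sequence of crossing changes realizing $\Sp(L)$. Fix a diagram for $L$ together with a sequence $L=L_0,L_1,\dots,L_n$ of diagrams, $n=\Sp(L)$, in which $L_{i+1}$ is obtained from $L_i$ by changing one crossing between two distinct components and $L_n$ is a split diagram. Since all weights are assumed pairwise distinct, every such crossing is one between components of different weight, which is exactly the situation controlled by Theorem~\ref{SseqThm}. For the split diagram we have $L_n=\coprod_r L_n^{(r)}$, so $E_1(L_n,w)\iso Kh(L_n;R)\iso E_\infty(L_n,w)$; as these have equal total rank, no differential in the sequence for $L_n$ can be nonzero, whence $b(L_n,w)=0$. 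Granting the local estimate $|b(L_i,w)-b(L_{i+1},w)|\le 1$, the telescoping inequality $b(L,w)=b(L_0,w)\le b(L_n,w)+n=n=\Sp(L)$ finishes the proof.

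Everything therefore rests on the local claim: if $L'$ is obtained from $L$ by changing a single crossing $x$ between components $c$ and $d$, then $|b(L,w)-b(L',w)|\le 1$. First note that such a crossing change alters neither $c$ nor $d$ as a knot, so $L$ and $L'$ have the same oriented split link $\coprod_r L^{(r)}$; by Theorem~\ref{SseqThm} the perturbed complexes computing $E_\bullet(L,w)$ and $E_\bullet(L',w)$ have isomorphic total homology, and in particular $E_\infty(L,w)\iso E_\infty(L',w)$. To compare the filtrations I would resolve every crossing except $x$ and present each perturbed complex as a mapping cone on the saddle at $x$: $C(L,w)\iso\mathrm{Cone}\bigl(C_0\xrightarrow{\;\phi\;}C_1\bigr)$ and, after the crossing change, $C(L',w)\iso\mathrm{Cone}\bigl(C_1\xrightarrow{\;\psi\;}C_0\bigr)$, where $C_0$ and $C_1$ are the perturbed complexes of the two resolutions of $x$ and the direction of the saddle is reversed by the crossing change. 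The perturbation adds to the Khovanov saddle a weight term carrying the factor $w_c-w_d$; because $c$ and $d$ have distinct, hence difference-invertible, weights, this term is invertible in the appropriate internal degree.

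From the two cone presentations I would build the crossing-change chain maps between $C(L,w)$ and $C(L',w)$ out of the identity maps on the shared pieces $C_0$ and $C_1$. These are chain homotopy equivalences—this is where the invertibility of $w_c-w_d$ is essential, since it is what promotes the weight term to an isomorphism and thereby trivializes the linking at $x$ in homology—but they interact with the filtration with a shift of one, because the conventions distinguishing a positive from a negative crossing move the relevant filtration by one level at $x$. Thus the two spectral sequences are $1$-interleaved, so their collapse pages differ by at most one, giving $|b(L,w)-b(L',w)|\le 1$, as required.

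The main obstacle is the precise bookkeeping in this last step. One must verify not only that the crossing-change maps are homotopy equivalences after forgetting the filtration—which is a formal consequence of the cone descriptions once the weight term is seen to be invertible—but that they realize an interleaving of exactly one page, neither zero nor two. Pinning this constant down is the entire quantitative content of the bound: it requires tracking how the weight deformation is distributed across the filtration of the cube of resolutions, precisely the data assembled in the proof of Theorem~\ref{SseqThm}, and confirming that a single inter-component crossing change moves that data by one filtration level at $x$. Once the single-crossing estimate is established with the correct constant, independence of the chosen splitting sequence is automatic and the induction goes through.
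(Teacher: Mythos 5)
Your overall skeleton --- induct along a sequence of $\Sp(L)$ inter-component crossing changes, show the split endpoint has $b=0$, and prove a one-page local estimate for each single crossing change --- is exactly the paper's strategy. But the two steps carrying all the content have genuine gaps. First, the mapping-cone presentation is false for the deformed complex: the whole point of the deformation $d_1$ is that it adds a \emph{reverse} edge map at every mixed crossing, in particular at the crossing $x$ being changed, so the total differential has components both $C_0 \to C_1$ and $C_1 \to C_0$; neither summand is a subcomplex, and $C(L,w)$ is not $\mathrm{Cone}\bigl(C_0\to C_1\bigr)$. Worse, ``the perturbed complexes of the two resolutions of $x$'' are not well-defined objects: one resolution of $x$ merges the components $c$ and $d$, and the merged circle has no single weight, so $C_0$ and $C_1$ carry internal reverse maps inherited from the weighting of $L$ rather than being deformed complexes of links in their own right. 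The correct comparison (the paper's Proposition~\ref{CrossingChangeProp}) is of a different nature: $C(D,w)$ and $C(D',w')$ are literally \emph{isomorphic} as unfiltered complexes, via a map that rescales generators by signs and by the unit $w_c - w_d$, so that one obtains two filtrations on one and the same complex.

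Second, the step you dismiss as bookkeeping is the actual theorem. You assert that ``$1$-interleaved'' spectral sequences have collapse pages differing by at most one. Precisely this statement --- for two filtrations on the same complex whose degrees differ pointwise by $0$ or $1$, the collapse page moves by at most one --- is Proposition~\ref{filtrationchangeprop} of the paper, and its proof is a genuine argument (a minimality induction with a two-case analysis of representatives), not an accounting exercise; the authors even remark they could not find it in the literature. Moreover, your formulation would need this lemma in the weaker setting of filtration-shifting homotopy equivalences between two \emph{different} complexes, which is strictly harder than the paper's setting of two filtrations on a single complex. Two smaller points: the filtration discrepancy produced by a crossing change is not a uniform shift by one --- it takes the two adjacent values $\{-1,-2\}$ (positive crossing) or $\{1,2\}$ (negative crossing) depending on the resolution at $x$, and only after discarding a harmless global shift does it lie in $\{0,1\}$, and verifying this two-valuedness is part of the required work; and your base-case argument by equality of total ranks is only meaningful over a field, whereas the theorem is stated over a general ring $R$ --- the paper instead observes that a split diagram has $d_1 = 0$, so the differential preserves the filtration and $E_1 = E_\infty$ directly.
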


We also use the spectral sequence to show that the Poincar\'e polynomial of Khovanov homology detects the unlink. In contrast, there are infinite families of links with the same Jones polynomial as the unlink \cite{ekt, thistle}.

\begin{theorem}\label{ThmUnlinkDetection} 
  Let $L$ be an $m$-component link, and $U^m$ the $m$-component unlink.  If
\[ \rank Kh^{i,j}(L;\F_2) = \rank Kh^{i,j}(U^m;\F_2) \]
  for all $i, j$, then $L$ is the unlink.
\end{theorem}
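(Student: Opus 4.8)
The plan is to extract a rank inequality from the spectral sequence of Theorem~\ref{SseqThm} run with the finest possible weighting. Work over a finite field $R=\F_{2^k}$ containing at least $m$ elements, and assign the components $K_1,\dots,K_m$ of $L$ pairwise distinct weights $w_1,\dots,w_m\in R$, so that each difference $w_c-w_d$ is a unit. Theorem~\ref{SseqThm} then produces a spectral sequence with
\[E_1\iso Kh(L;R)\qquad\text{and}\qquad E_\infty\iso Kh(K_1\sqcup\cdots\sqcup K_m;R),\]
where the right-hand group is the Khovanov homology of the split union of the components, each now regarded as an individual knot. Because $R$ is a field extension of $\F_2$ the Khovanov complex base-changes freely, so $\dim_R Kh(L;R)=\dim_{\F_2}Kh(L;\F_2)=2^m$ by hypothesis; and because total rank can only drop along a spectral sequence, $\dim_R E_\infty\le 2^m$.

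Now I would pin the components down. Over the field $R$ the Khovanov homology of a split union is a tensor product, so $\dim_R E_\infty=\prod_{i=1}^m\dim_{\F_2}Kh(K_i;\F_2)$. Every knot satisfies $\dim_{\F_2}Kh(K_i;\F_2)\ge 2$, since the $E_\infty$ page of its Bar--Natan spectral sequence is already two-dimensional. Hence
\[2^m\le\prod_{i=1}^m\dim_{\F_2}Kh(K_i;\F_2)=\dim_R E_\infty\le 2^m,\]
which forces $\dim_{\F_2}Kh(K_i;\F_2)=2$ for every $i$ (and incidentally forces the sequence to collapse at $E_1$). By Kronheimer--Mrowka's unknot-detection theorem, together with the identity $\dim_{\F_2}Kh(K;\F_2)=2\,\dim_{\F_2}\widetilde{Kh}(K;\F_2)$ valid over $\F_2$, each component $K_i$ is therefore unknotted. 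This is the essential new input supplied by our spectral sequence: it converts the rank hypothesis directly into unknottedness of the components, with no reference to the module structure used by Hedden--Ni.

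Separately, I would track the same rank hypothesis through the double branched cover. Over $\F_2$ one has $\dim_{\F_2}\widetilde{Kh}(L;\F_2)=2^{m-1}$, and the mirror $\overline L$ has the same reduced rank; feeding this into the Ozsv\'ath--Szab\'o spectral sequence $\widetilde{Kh}(\overline L;\F_2)\Rightarrow\widehat{HF}(\Sigma(L);\F_2)$ and again using that rank can only drop gives $\dim_{\F_2}\widehat{HF}(\Sigma(L);\F_2)\le 2^{m-1}$. This is the minimal possible value, realized by $\Sigma(U^m)=\#^{m-1}(S^1\times S^2)$, so the double branched cover of $L$ is forced to have minimal-rank Heegaard Floer homology.

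The crux, and where I expect the genuine difficulty to sit, is the final geometric step: an $m$-component link whose components are unknotted and whose double branched cover has minimal-rank $\widehat{HF}$ must be the unlink. None of the Khovanov-theoretic bookkeeping above sees geometry, so this implication cannot be purely homological. My intention is to import it from Hedden--Ni, whose sutured-Floer and Gabai-style analysis of the branched cover establishes exactly this detection (via the identification $\Sigma(L)\iso\#^{m-1}(S^1\times S^2)$ and the characterization of links with such branched covers). Granting their result, the two strands combine: the link-splitting spectral sequence guarantees unknotted components, the Ozsv\'ath--Szab\'o sequence guarantees a minimal branched cover, and Hedden--Ni then identify $L$ with $U^m$.
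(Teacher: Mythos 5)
Your first step---distinct weights, the rank inequality from the spectral sequence, and Kronheimer--Mrowka to conclude that every component is an unknot---is correct, and is precisely Proposition~\ref{PropUnknotComp} of the paper. The fatal gap is your final step. Everything you extract from the hypothesis is a statement about \emph{total} ranks, and total ranks cannot detect the unlink. Concretely, the Hopf link $H$ satisfies every condition you derive: both components are unknots, $\rank Kh(H;\F_2)=4=2^2$, and its branched double cover is $\mathbb{RP}^3$, whose $\widehat{HF}$ has rank $2=2^{2-1}$, exactly the value you call minimal. Yet $H$ is not the unlink. So the implication you hope to import from Hedden--Ni---unknotted components together with minimal-rank $\widehat{HF}$ of the branched double cover forces the unlink---is false. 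The underlying reason is that $\rank\widehat{HF}(\Sigma(L);\F_2)=2^{m-1}$ does not identify $\Sigma(L)$ with $\#^{m-1}(S^1\times S^2)$: nothing in your hypotheses controls $b_1(\Sigma(L))$ (for the Hopf link it is $0$, not $1$), and even when $b_1$ is correct, rank is blind to L-space summands such as the Poincar\'e sphere. Hedden--Ni's Floer-theoretic detection is a \emph{module} statement: it needs $\widehat{HF}(\Sigma(L);\F_2)$ to be free of rank one over the exterior algebra on $H_1/\mathrm{tors}$, equivalently $Kh(L;\F_2)$ to be free of rank one over $A_m$, and your argument never produces that structure.

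The symptom is that you never use the bigraded hypothesis, only $\sum_{i,j}\rank Kh^{i,j}$; the paper's proof uses the bigradings essentially, and precisely to manufacture the module structure you are missing. Since $Kh(U^m)$ is concentrated in homological degree $0$ with rank $\binom{m}{k}$ in quantum grading $m-2k$, the spectral sequence collapses at $E_1$, so the induced filtration $\calG$ on the total homology has ranks given by partial sums of binomial coefficients. By Proposition~\ref{totalhomfreerankone} that total homology is a free rank-one $A^\F_m$-module, and comparing ranks identifies $\calG$ with the $I$-adic filtration for $I=(X_1,\dots,X_m)$; hence $Kh(L;\F)\iso E_\infty$ is free of rank one over $A^\F_m$ as a module, not merely of the right rank. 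A descent argument using that $A_m$ is local brings this down to $\F_2$, and only then is the Hedden--Ni module-detection theorem (Theorem~\ref{HeddenNiThm}) applied. The bigradings are exactly what exclude the Hopf link: there the filtration has ranks $1<2<3<4$ rather than $1<3<4$, and the associated graded module is not free, as the example closing \S\ref{sec:unlinkdetect} shows.
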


The proof of Theorem~\ref{ThmUnlinkDetection} depends on two earlier spectral sequences that relate Khovanov homology to more manifestly geometric invariants coming from Floer homology. The first, constructed by Ozsv\'ath and
Szab\'o, begins at the Khovanov
homology of a link and converges to the Heegaard Floer homology of
its branched double cover \cite{ozsszo}. The second, constructed by Kronheimer and Mrowka, 
begins at the Khovanov homology of a knot and converges to its instanton knot Floer homology \cite{km}. The latter group is nontrivial for
nontrivial knots, so:
\begin{theorem}[Kronheimer-Mrowka]
Let $K$ be a knot, and $U$ the unknot. If 
\[ \rank Kh(K) = \rank Kh(U), \]
then $K$ is the unknot.
\end{theorem}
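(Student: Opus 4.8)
The plan is to derive the theorem as a formal consequence of the Kronheimer--Mrowka spectral sequence together with the nonvanishing and genus-detection properties of instanton knot Floer homology; all of the geometric input is contained in \cite{km}, so our task is only to arrange the ranks. Recall that their spectral sequence begins at the reduced Khovanov homology $\widetilde{Kh}(K)$ and converges to the instanton invariant $I^\natural(K)$. Since each page is the homology of its predecessor, the total rank cannot increase from page to page, and therefore
\[ \dim I^\natural(K) \le \rank \widetilde{Kh}(K). \]

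First I would convert the hypothesis on the unreduced group into one on $\widetilde{Kh}(K)$. Working over $\F_2$, Shumakovitch's isomorphism identifies $Kh(K;\F_2)$ with $\widetilde{Kh}(K;\F_2)\otimes V$ for a two-dimensional space $V$, so that $\rank Kh(K) = 2\,\rank \widetilde{Kh}(K)$. As $\rank Kh(U) = 2$, the hypothesis $\rank Kh(K) = \rank Kh(U)$ forces $\rank \widetilde{Kh}(K) = 1$, whence $\dim I^\natural(K) \le 1$ by the displayed inequality. On the other hand, the graded Euler characteristic of $I^\natural(K)$ is, up to normalization, the Alexander polynomial of $K$, whose value at $1$ is $\pm 1$; hence $I^\natural(K)$ is nonzero and $\dim I^\natural(K) \ge 1$. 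Combining the two bounds gives $\dim I^\natural(K) = 1$, and the detection theorem of \cite{km}---that $I^\natural(K)$ has rank one precisely when $K$ is the unknot---yields the conclusion.

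I expect the genuine depth of the statement to reside entirely in the two black boxes imported from \cite{km}: the construction of the spectral sequence, and, above all, the implication $\dim I^\natural(K)=1 \Rightarrow K = U$, which relies on the relationship between $I^\natural$ and sutured instanton homology and on the instanton detection of the Seifert genus. On our side, the only point requiring care is the reconciliation of coefficients: the clean reduced/unreduced relation above is an $\F_2$ phenomenon, whereas the instanton spectral sequence is naturally defined over $\Z$ (or $\C$), so one must check that the rank inequality and the Euler-characteristic estimate are applied over mutually compatible ground rings before the chain of inequalities can be closed.
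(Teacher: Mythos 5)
Your proposal is correct and takes essentially the same route as the paper: the paper presents this statement as Kronheimer--Mrowka's theorem, justified in one sentence by the spectral sequence from Khovanov homology to instanton knot Floer homology plus the nontriviality of that group for nontrivial knots, which is exactly the skeleton you flesh out (reduced vs.\ unreduced, Shumakovitch's isomorphism, and the $\F_2$/$\Z$ bookkeeping being minor variants of detail the paper leaves to the citation). One small caveat: the identification of the graded Euler characteristic of $I^\natural(K)$ with the Alexander polynomial is due to Lim rather than to \cite{km}, though \cite{km} itself supplies the nonvanishing and the rank-one detection that your argument actually needs.
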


\begin{figure}[H]
  \begin{center}
    \includegraphics[scale=0.3]{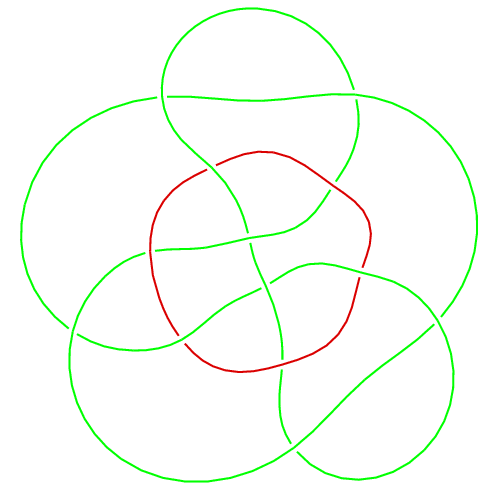}
  \end{center}
  \caption{The link ${}^2n^{13}_{8862}$ has splitting number $3$.\label{boundexfig}}
\end{figure}

The Khovanov homology groups contain more information than their ranks alone---there
is a natural action of the algebra
\[ A_m = \F_2[X_1,\dots,X_m]/(X_1^2, X_2^2, \dotsc, X_m^2)  \]
on the homology of an $m$-component link.
Hedden and Ni \cite{heddenni} showed that the entire spectral sequence of  Ozsv\'ath and
Szab\'o admits a compatible $A_m$ action. They then used of Floer homology
to detect $S^1 \times S^2$ summands in the branched double cover of the link, and showed:
\begin{theorem}[Hedden-Ni]\label{HeddenNiThm}
Let $L$ be an $m$-component link, and $U^m$ the $m$-component unlink. If there
is an isomorphism of $A_m$ modules
\[ Kh(L;\F_2) \cong Kh(U^m;\F_2), \]
then $L$ is the unlink.
\end{theorem}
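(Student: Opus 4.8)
The plan is to follow the three ingredients named above: run the Ozsv\'ath--Szab\'o spectral sequence, carry the $A_m$-action through all of its pages to the branched double cover, and then use Floer-theoretic detection of $S^1\times S^2$ summands together with a covering-space argument to recover the link. Recall first the model computation: the branched double cover of the unlink is $\Sigma(U^m)\cong \#^{m-1}(S^1\times S^2)$, and its Heegaard Floer homology
\[ \widehat{HF}\bigl(\#^{m-1}(S^1\times S^2);\F_2\bigr)\cong \Lambda^*\bigl(\F_2^{\,m-1}\bigr) \]
has the minimal possible rank $2^{m-1}$ and carries the full exterior module structure over $\Lambda^* H^1$. The Ozsv\'ath--Szab\'o spectral sequence begins at the (reduced) Khovanov homology of the mirror $\bar L$ over $\F_2$ and converges to $\widehat{HF}(\Sigma(L);\F_2)$; since Khovanov homology is determined up to the usual symmetry by passing to the mirror and the unlink is amphichiral, the hypothesised isomorphism $Kh(L;\F_2)\cong Kh(U^m;\F_2)$ transfers to $\bar L$, so I may work with $\bar L$ throughout.

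First I would upgrade this spectral sequence to one of $A_m$-modules. The $A_m$-action on $Kh(L;\F_2)$ is geometric: $X_c$ is basepoint (dot) multiplication on the component $c$. On the Floer side the corresponding operators are the action of the classes in $H_1(\Sigma(L);\F_2)$ carried by the lifted meridians of the components. The technical heart of the argument --- and the content of the Hedden--Ni construction --- is that these two actions are intertwined by the differential on every page, so that an isomorphism of $A_m$-modules at $E_2$ propagates to an isomorphism of the associated graded at $E_\infty$ as modules. Feeding in $Kh(L;\F_2)\cong Kh(U^m;\F_2)$, I would conclude that the spectral sequence for $L$ degenerates exactly as it does for $U^m$ and that $\widehat{HF}(\Sigma(L);\F_2)\cong \widehat{HF}(\#^{m-1}(S^1\times S^2);\F_2)$ as $\Lambda^* H^1$-modules; in particular $b_1(\Sigma(L))=m-1$ and the Floer homology realises the minimal rank $2^{m-1}$ with the exterior-algebra structure.

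Next I would invoke the Floer detection of $S^1\times S^2$ summands. Because $\widehat{HF}$ detects the Thurston norm and fiberedness, a closed oriented three-manifold $Y$ with $b_1(Y)=m-1$ whose Floer homology realises the minimal rank $2^{m-1}$ together with the full $\Lambda^* H^1(Y;\F_2)$-module structure must be $\#^{m-1}(S^1\times S^2)$: any nontrivial norm, any non-torsion $\mathrm{spin}^c$ support, or any failure of the exterior structure would raise the rank or spread out the homology. Applying this to $Y=\Sigma(L)$ gives $\Sigma(L)\cong \#^{m-1}(S^1\times S^2)$.

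Finally I would descend from the cover to the link. The manifold $\#^{m-1}(S^1\times S^2)$ is reducible, so by the equivariant sphere theorem the covering involution can be isotoped to respect a maximal system of reducing spheres; splitting along their images downstairs exhibits $L$ as a split union of $m$ sublinks, each of whose branched double cover is $S^3$. A knot whose double branched cover is $S^3$ is the unknot (by the resolution of the Smith conjecture), so every piece is an unknot and $L=U^m$. I expect the main obstacle to be the equivariance in the second paragraph: matching the $H_1$-action on $\widehat{HF}$ with the $A_m$-action page by page requires a careful naturality analysis of the Ozsv\'ath--Szab\'o filtration and of how dotted cobordisms act on it, and only once this compatibility is secured does the module isomorphism rather than a mere rank count become available. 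The closing covering-space step is geometrically delicate but standard given the equivariant sphere theorem and the Smith-conjecture input.
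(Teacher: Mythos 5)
This theorem is not proved in the paper at all: it is imported verbatim from Hedden--Ni \cite{heddenni}, and the paper's only gloss on its proof is exactly the strategy you describe --- endow the Ozsv\'ath--Szab\'o spectral sequence with a compatible $A_m$-action, then use Floer homology to detect $S^1\times S^2$ summands in the branched double cover. Your sketch reconstructs that same route (module-compatible spectral sequence, Floer detection of $\Sigma(L)\cong \#^{m-1}(S^1\times S^2)$, and the standard equivariant-sphere-theorem/Smith-conjecture descent to conclude $L$ is the unlink), so it matches the argument the paper attributes to its source rather than departing from it.
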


To prove Theorem ~\ref{ThmUnlinkDetection}, we apply our spectral sequence with component weights in a suitably large finite field $\F$ of characteristic $2$. We lift the $A_m$-module structure from the abutment of our spectral sequence, which turns out to be isomorphic to $Kh(U^m;\F)$, to the first page, $Kh(L;\F)$, and then to $Kh(L;\F_2)$, where we apply Theorem~\ref{HeddenNiThm}.

\noindent {\bf Deformations.} We construct our spectral sequence by giving a filtration-preserving deformation of the
differential in Khovanov's cube of resolutions complex. That deformation may have analogues in other flavors of Khovanov homology. 

In \cite{seidelsmith}, Seidel and Smith associate
to a braid $\beta$ a pair of Lagrangians in an exact symplectic manifold $\mathcal{Y}_{n,t_0}$.
The Lagrangian Floer homology of that pair is conjectured to be isomorphic to the Khovanov homology
of the closure of $\beta$ (Abouzaid and Smith have announced a proof \cite{sympkheq}). In \S\ref{sec:symp}, we discuss how a perturbation of the symplectic form away
from exactness could give a result in symplectic Khovanov homology similar to the rank inequality implied by Theorem~\ref{ThmUnlinkDetection}:
\begin{corollary}\label{CorRankBounds}
Let $\F$ be any field, and let $L$ be a link with components $K_1,\dots,K_m$. Then
\[ \rank Kh^*(L;\F) \geq \rank \otimes_{c=1}^m Kh^*(K_c;\F). \]
\end{corollary}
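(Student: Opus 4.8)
The plan is to derive Corollary~\ref{CorRankBounds} as a rank consequence of the spectral sequence of Theorem~\ref{SseqThm}. The corollary asserts $\rank Kh^*(L;\F)\ge \rank\bigotimes_{c=1}^m Kh^*(K_c;\F)$ for an arbitrary field $\F$, so the first move is to arrange that the spectral sequence actually converges to the Khovanov homology of the \emph{fully split} link, i.e. to $\coprod_{c=1}^m K_c$. By Theorem~\ref{SseqThm} this happens precisely when we assign \emph{pairwise distinct} weights $w_c\in R$ with every difference $w_c-w_d$ invertible. Over a given field $\F$ this is not always possible (a small finite field may not have $m$ distinct elements), so I would first pass to a finite extension field $\F'\supseteq\F$ large enough that $m$ distinct weights exist; since every nonzero element of a field is invertible, the invertibility hypothesis is then automatic.

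Next I would unpack the abutment. The disjoint union $\coprod_{r} L^{(r)}$ with all weights distinct is exactly the split union $K_1\sqcup\cdots\sqcup K_m$, and Khovanov homology carries a Künneth formula: over a field, $Kh^*\!\bigl(\coprod_c K_c;\F'\bigr)\cong\bigotimes_{c=1}^m Kh^*(K_c;\F')$. Thus the spectral sequence runs
\[
E_1(L,w)\cong Kh(L;\F')\ \Longrightarrow\ \bigotimes_{c=1}^m Kh^*(K_c;\F').
\]
The central structural fact I would invoke is that in any spectral sequence of $\F'$-vector spaces the rank can only drop (or stay equal) from one page to the next, because each page is the homology of the previous one with respect to a differential, and passing to homology never increases dimension. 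Iterating, $\rank E_1\ge \rank E_\infty$. Since $E_1\cong Kh(L;\F')$ and $E_\infty$ is the associated graded of a filtration on the abutment, it has the same total rank as the abutment; hence
\[
\rank Kh^*(L;\F')\ \ge\ \rank E_\infty\ =\ \rank\bigotimes_{c=1}^m Kh^*(K_c;\F').
\]

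It remains to descend from the extension field $\F'$ back to $\F$. This is the step I expect to require a little care, though it is standard: Khovanov homology is defined from a chain complex of free $\Z$-modules (or over $\F$, free $\F$-modules), and field extension is flat, so $Kh^*(-;\F')\cong Kh^*(-;\F)\otimes_\F\F'$ for every link. Tensoring with a field extension preserves dimension, so $\rank_{\F'}Kh^*(L;\F')=\rank_\F Kh^*(L;\F)$, and likewise each factor $\rank_{\F'}Kh^*(K_c;\F')=\rank_\F Kh^*(K_c;\F)$; the tensor product of the extended factors is the extension of the tensor product, so the right-hand rank is also unchanged. Substituting these equalities into the inequality proved over $\F'$ yields exactly
\[
\rank Kh^*(L;\F)\ \ge\ \rank\bigotimes_{c=1}^m Kh^*(K_c;\F),
\]
which is the assertion of the corollary.

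The genuinely delicate point is the Künneth identification of the abutment: one must check that the spectral sequence's target $Kh(\coprod_c K_c;\F')$ really splits as the tensor product of the individual $Kh^*(K_c;\F')$, including bigradings, and that the weight-distinctness hypothesis forces $L^{(r)}$ to be a single component for each realized weight $r$. Everything else—monotonicity of rank across pages and flat base change of coefficients—is formal. I would therefore spend the bulk of the argument on verifying the convergence hypothesis and the Künneth splitting, and treat the rank-monotonicity and field-extension steps as brief routine remarks.
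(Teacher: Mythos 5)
Your proposal is correct and follows essentially the same route as the paper: choose pairwise distinct weights over a sufficiently large extension field $\F'$ so that Theorem~\ref{SseqThm} identifies the abutment with $Kh$ of the split union (hence, by K\"unneth over a field, with $\bigotimes_c Kh^*(K_c;\F')$), use rank monotonicity of spectral sequence pages to get $\rank E_1 \geq \rank E_\infty$, and descend to $\F$ via the base-change isomorphism $Kh^*(-;\F')\cong Kh^*(-;\F)\otimes_\F \F'$. The paper additionally records a stronger $\ell$-graded refinement with an explicit linking-number shift, but for the corollary as stated your argument matches the paper's.
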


Cautis and Kamnitzer have given a description of Khovanov homology in terms
of derived categories of coherent sheaves \cite{cautkam}, which is mirror
to symplectic Khovanov homology. It would be interesting
to see how our deformation manifests in their framework.

There is also a Khovanov homology for tangles \cite{khovanov2}, 
which associates to a tangle $T$ a complex $Kh(T)$ of bimodules over
certain rings. In a forthcoming article, we will construct a deformation of
$Kh(T)$ to a curved complex, analogous to the curved Fukaya category
of a non-exact symplectic manifold \cite{forgetfultang}. When the
tangle is a closed link, this will recover the spectral sequence constructed here. 
We suspect that a similar deformation exists in the
Khovanov-Rozansky theory of matrix factorizations for
$\mathfrak{sl}_n$ link homology \cite{khovanovrozansky}.

\noindent {\bf Outline.} In \S\ref{sec:ourconst},
we recall the construction of the Khovanov complex, define a filtered
chain complex $C(D, w)$ which induces the spectral sequence $E_k$, 
 and compute the $E_1$ and $E_\infty$ pages.  
 In \S\ref{sec:inv}, we verify that our spectral sequence is
 independent of the choice of link diagram by checking invariance under the Reidemeister moves.
 In \S\ref{sec:ssreview}, we review how
endomorphisms of a filtered complex act on the associated spectral
sequence and discuss the effect of changing the filtration.
In \S\ref{sec:splitnum}, we prove Theorem \ref{SplittingNumThm} on the 
splitting number.
In \S\ref{sec:unlinkdetect}, we give the proof of unlink detection. 
In \S\ref{sec:symp} we discuss the relationship with symplectic topology.
Finally, in \S\ref{sec:ex}, we discuss
computations illustrating the strength of this spectral sequence and
the splitting number bound.

\noindent {\bf Acknowledgements.}  The authors would like to thank Yi Ni for asking if such a spectral sequence might exist, Paul Seidel for suggesting the deformation of symplectic Khovanov homology, and Eli Grigsby for suggesting unlink detection as a potential application and introducing us to \cite{heddenni}. We also wish to thank Peter Ozsv\'ath, Sucharit Sarkar and Zoltan Szab\'o for helpful conversations. The link diagrams in this paper were rendered by Knotilus \cite{knotilus}. The work of the first author was partially supported  by NSF grants DMS-0603940, DMS-1006006 and EMSW21-RTG.

\section{Our construction}
\label{sec:ourconst}
	Khovanov's construction begins with a diagram $D$ for a link $L$. He builds a cube of resolutions for $D$ and applies a $(1+ 1)$-dimensional TQFT $\calA$ to produce a cube-graded complex. A sprinkling of signs yields a chain complex $(C(D), d_0)$, with homology $Kh(L)$. We will give another differential $d$ on the same chain complex, but first we must set some notation.

	\subsection{A review of Khovanov homology}
		(Following \cite{khovanov} and \cite{bncat}.)

A crossing in a link diagram can be resolved in two ways, called the 0-resolution and 1-resolution in Figure~\ref{resolutionfig}. 
A (complete) resolution of $D$ is a choice of resolution at each crossing. Order the crossings
of $D$ from $1$ to $n$ so we can index complete resolutions by vertices in the hypercube $\{0, 1\}^n$.
An edge in the cube connects a pair of resolutions $(I, J)$, where $J$ is obtained from $I$ by changing the $i^\text{th}$ digit from $0$ to $1$. A complete resolution $I$ yields a finite collection of circles in the plane, which we may also call $I$. An edge $(I,J)$ yields a cobordism from $I$ to $J$, given by the natural saddle cobordism from the $0$- to the $1$-resolution in a neighborhood of the changing crossing and the product cobordism elsewhere.

\begin{figure}
  \begin{center}
    \includegraphics{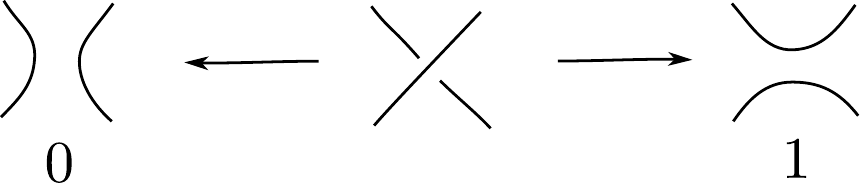}
  \end{center}
  \caption{The $0$ and $1$ resolutions associated to a
    crossing.\label{resolutionfig}}
\end{figure}

A $(1 + 1)$-dimensional TQFT is determined by commutative Frobenius algebra \cite{ko}. We fix a ring of coefficients $R$, and let $\calA$ be the TQFT associated to the Frobenius algebra $V = H^*(S^2;R) = R[x]/(x^2)$. The diagonal map $i: S^2 \hookrightarrow S^2 \times S^2$ induces the multiplication $i^*:H^*(S^2 \times S^2)  \rightarrow H^*(S^2)$. The comultiplication comes from Poincar\'{e} duality, $PD\circ i_* \circ PD: H^*(S^2) \rightarrow H^*(S^2 \times S^2)$. More explicitly, the multiplication $m : V\otimes V\to V$ is given by
\begin{align*}
  m(1\otimes 1) &= 1 & m(x\otimes 1) &= x \\
  m(1\otimes x) &= x & m(x\otimes x) &= 0,
\end{align*}
and the comultiplication $\Delta : V\to V\otimes V$ is given by
\begin{align*}
  \Delta(1) &= 1 & \Delta(x) &= 1\otimes x + x\otimes 1.
\end{align*}

The TQFT $\calA$ associates to a circle the $R$-module $V$ and
 takes disjoint unions to tensor products. The pair of pants cobordism that merges
two circles into one induces the multiplication map $m$, and the pair of pants cobordism that splits
one circle into two induces the comultiplication map $\Delta$.

Let $S = (x_1, \dotsc, x_p)$ be a collection of circles.  To simplify
notation, we note that
\begin{align*}
  \calA(S) &= \bigotimes_{i=1}^p V \\
  &= R[x_1, \dotsc, x_t]/(x_1^2, \dotsc, x_p^2).
\end{align*}
We will write elements of $V(S)$ as (commutative) products of
the circles $x_i$ rather elements of the tensor product.  Such a
product of circles is called a monomial of $S$.

Applying the TQFT $\calA$ to the cube of resolutions, we obtain a
cube-graded complex of $R$-modules.  For each resolution $I$, we have
an $R$-module $\calA(I)$, and for each edge $(I, J)$, we have a
homomorphism $\calA(I, J) : \calA(I)\to \calA(J)$.  Khovanov's complex
is obtained by collapsing the cube-graded complex.  We set
\[ C(D) = \bigoplus_{\text{resolutions $I$}} V(I). \]
The differential $d_0 : C(D)\to C(D)$ is given by
\[ d_0 = \sum_{\text{edges $(I, J)$}} (-1)^{n(I, J)} \calA(I, J), \]
where, if $(I, J)$ differ at $i$,
\[ n(I, J) = \#\{ I(k) = 1 \,|\, 1\le k < i \}. \]

We define four related gradings on $C(D)$ as follows. Let $x \in V(I)$.
The homological or $h$ grading is given by
\[ h(x) = |I| - n_-(D), \]
where $|I|$ number of 1 digits in $I$ and $n_-(D)$ is
the number of negative crossings in $D$. Monomials in $V^{\otimes p}$ have a natural degree induced by 
\[ \deg(1) = 0 \text{ and } \deg(x_i) = 2. \]
The internal or $\ell$ grading is given by 
\[ \ell(x) = \deg(x) - p(I) - \mbox{writhe}(D), \]
where  $p(I)$ is the number of circles in the resolution $I$.
The quantum or $q$ grading is given by
\begin{align*}
  q(x) &= h(x) - \ell(x) \\
\end{align*}
Finally, we define the $g$ grading, a normalization of the $q$ grading, by
\[ g(x) = \frac{q(x) - m}{2}, \]
where $m$ is the number of components of $L$. (It turns out that $g$ is always an integer \cite[\S 6.1]{khovanov}.)  The $g$ grading will induce the filtration on $C(D)$ in the definition of our spectral sequence.

Khovanov's differential $d_0$ increases both $h$ and $\ell$ by $1$, so it preserves $q$ and $g$. Khovanov homology is
\[ Kh(L) = H^*(C(D), d_0), \]
and has a bigrading given by $(h,q)$.

A choice of marked point on the diagram $D$ induces an endomorphism of
Khovanov homology \cite{khpatterns}:  Let $p$ be a marked point on $L$
away from the double points.  For a resolution $I$, let $x_p = x_p(I)$
denote the circle of $I$ meeting $p$.  Define a map $X_p : C(D)\to
C(D)$ by 
\[ X_p(x) = x_px \]
for $x \in V(I)$.
The map $X_p$ is a chain map and shifts the $(h, q)$ bigrading by $(0,
-2)$.  The map induced on homology, which we also call $X_p$, depends
only on the marked component, and not on the choice of marked point.

	\subsection{Our construction}
		
We begin by describing our construction in the case of a two-component
link $L$ with coefficients in $\F_2$. Khovanov's construction 
assigns a bigraded chain complex $(C(D),d_0)$ to a planar diagram $D$
for $L$.  We will give an endomorphism $d_1$ of $C(D)$ such that
\begin{enumerate}
\item $d:=d_0+d_1$ is a differential, which increases the $\ell$-grading by 1.
\item $d_1$ lowers the $g$-grading by 1, making $(C(D),d)$ a $g$-filtered complex.
\item If $i$ is a crossing in $D$ involving strands from different components of $L$ (a \emph{mixed} crossing), and $D'$ is the diagram for a link $L'$ produced by changing over-strand to under-strand at $i$, then $(C(D),d)$ and $(C(D'),d')$ are isomorphic chain complexes (with different $g$-filtrations).
\end{enumerate}

The new endomorphism is
\[ d_1 = \sum_{\text{mixed edges $(I, J)$}} \calA(J, I), \label{twocompdiff}\]
where an edge in the cube of resolutions is mixed if the $I$ and $J$ differ at a mixed crossing, and $(J, I)$ denotes the cobordism $(I, J)$ viewed backwards as a cobordism from $J$ to $I$.
The total differential
\[ d = \sum_{\text{non-mixed edges $(I, J)$}} \calA(I, J) + \sum_{\text{mixed edges $(I, J)$}} \calA(I, J) + \calA(J, I)  \]
is manifestly unchanged if we swap a mixed crossing. The square $d^2$ can have a component from $V(I)$ to $V(J)$ only when $I$ and $J$ differ at $2$ crossings or when $I = J$. The former vanish because they come in commuting squares (all maps are induced by cobordisms, and those commute due to the TQFT). The latter will vanish too, essentially because each circle in a complete resolution must have an even number of mixed crossings.

To define the endomorphism $d_1$ when there are more than two components, or over bigger rings, we need some additional data. First, we must weight each component by an element of the coefficient ring $R$: component $c$ has weight $w_c$. Then we must construct a sign assignment so that $d^2$ will be zero, not just even. As usual, different choices of sign assignment will produce isomorphic complexes.

We now define a sign assignment.  The shadow of the diagram $D$ in the plane
gives a CW decomposition $X$ of $S^2$: the $0$-cells are the double
points of the diagram, the $1$-cells are the the $2n$ edges between
the crossings (oriented by the orientation of the link), and the
$2$-cells are the remaining regions (with the natural orientation induced from $S^2$).  For a
$1$-cell $e$, let $e(0)$ denote the initial vertex and $e(1)$ denote
the final vertex so that $\partial e = e(1) - e(0)$.

Let
\[ h(e, i) = \begin{cases}
  1 & \text{$e$ is an upper strand at $e(i)$} \\
  -1 & \text{$e$ is a lower strand at $e(i)$},
  \end{cases}
\]
where $i\in \{0, 1\}$.  There is a natural $1$-cochain $\beta : X^1\to
\Z/2$, where $\Z/2 = \{1, -1\}$ is written multiplicatively, given by
\[ \beta(e) = \begin{cases}
  -1 & h(e, 0) = h(e, 1) \\
  1 & \text{otherwise}.
\end{cases}
\]

A {\em sign assignment} is a $0$-cochain $s : X^0\to \Z/2$ such that
\begin{align}\label{SignEq}
  s(e(0))s(e(1)) = \beta(e),
\end{align}
for all $1$-cells $e$.  This is equivalent to $\delta s = \beta$. Note that if $D$ is an alternating diagram, then $s \equiv 1$ is a legal sign assignment.
In the definition of $d_1$, we will use $s$ to sign the weight of the top strand at each crossing; the bottom strand will get the opposite sign.  The condition $\delta s =
\beta$ means that at adjacent crossings, connected by a strand in component $c$ of the link, the weight $w_c$ will appear with opposite signs in the contributions from each.

We now define the endomorphism $d_1$ of $C(D)$ as
\[ d_1 = \sum_{\text{edges (I, J)}} (-1)^{n(I, J)} s(i) (w^i_\text{over} - w^i_\text{under}) \calA(J, I), \]
where $I$ and $J$ differ at the $i^\text{th}$ crossing, and $w^i_\text{over}$
and $w^i_\text{under}$ are the weights of the over- and under-strands at
the $i^\text{th}$ crossing. Only the differences of weights appear, so shifting all
the weights by some $r\in R$, leaves the complex invariant.

In particular, the complex for a two-component link is determined by the choice of a single value $w_1-w_2 \in R$. If that difference is $1\in \F_2$, then this definition of $d_1$ reduces to \eqref{twocompdiff}.

The complex $(C(D),d = d_0 + d_1)$ now satisfies properties (1) and (2) from the beginning of this section. Both $d_0$ and $d_1$ increase the (internal) $l$-grading by $1$.  The differential $d_0$ preserves the $g$ grading and $d_1$ decreases the $g$ grading by $1$.  So we have a $g$-filtration on $(C(D), d)$ given by
\[ \calF^pC(D) := \{ x \,|\, x\in C(D), g(x)\le p \}. \]
Moreover, the spectral sequence associated to this filtration has
$E_1$ page given by $H^*(C(D),d_0) \cong Kh(L)$.

We now show it is always possible to choose a sign assignment.

\begin{figure}
  \begin{center}
    \includegraphics{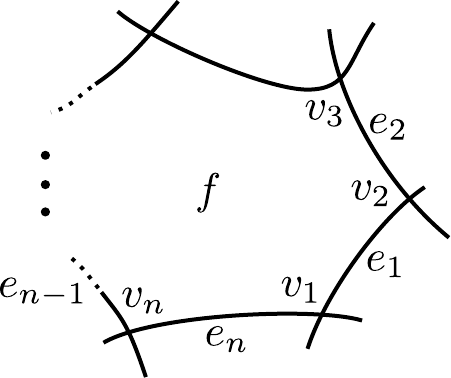}
  \end{center}
  \caption{\label{betacyclefig}}
\end{figure}

\begin{proposition}
  Let $D$ be a connected diagram.  There are precisely two sign
  assignments $s_1$ and $s_2$ for $D$, and $s_1 = -s_2$.
\end{proposition}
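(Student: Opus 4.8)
The plan is to read the defining equation $\delta s = \beta$ as an affine problem in the cellular cochain complex of $X$ with coefficients in $\Z/2 = \{1,-1\}$ (written multiplicatively), and to separate the count of solutions from their existence.

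First I would settle the count and the relation $s_1 = -s_2$, which is pure linear algebra. If nonempty, the solution set of $\delta s = \beta$ is a coset of $\ker(\delta : C^0(X)\to C^1(X)) = Z^0(X;\Z/2)$: indeed, if $s$ and $s_1$ both solve the equation, then $\delta(s/s_1) = \beta/\beta$ is trivial, so $s/s_1 \in Z^0(X;\Z/2)$. Since $D$ is connected, its underlying $4$-valent graph $X^1$ is connected, and a $0$-cochain is killed by $\delta$ exactly when it is constant on $X^1$; hence $Z^0(X;\Z/2) = \{1,-1\}$, the two constant cochains. Thus once a single sign assignment $s_1$ is produced, the full solution set is $\{s_1,\,-s_1\}$, giving exactly two sign assignments, negatives of one another. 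Everything therefore reduces to existence.

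Next I would establish existence cohomologically. Because $D$ is connected, the complementary regions of the diagram are disks, so $X$ is a genuine CW decomposition of $S^2$ whose cellular cochain complex computes $H^*(S^2;\Z/2)$. A solution to $\delta s = \beta$ exists precisely when $\beta$ is a coboundary; since $H^1(S^2;\Z/2)=0$, it suffices to check that $\beta$ is a cocycle, i.e. that $\delta\beta = 0$. This is the one genuinely geometric step, and the part I expect to be the main obstacle.

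To verify $\delta\beta = 0$ I would evaluate $\delta\beta$ on an arbitrary $2$-cell $f$ after rewriting $\beta(e) = -\,h(e,0)\,h(e,1)$, which follows directly from its definition. Traversing the attaching cycle of $f$ as an alternating sequence of corners and edges $v_0, e_1, v_1, \dots, e_m, v_m = v_0$, the contribution is $\prod_{j=1}^m \beta(e_j) = (-1)^m \prod_{j} h(e_j, v_{j-1})\,h(e_j, v_j)$. Regrouping the $h$-factors by the corner at which they occur, each vertex $v_j$ contributes the product of the $h$-values of the two boundary edges of $f$ meeting there. The crux is then the local picture at a crossing (Figure~\ref{betacyclefig}): the four edge-germs alternate over, under, over, under around the double point, so each of the four quadrant-regions is flanked by exactly one over-germ ($h=+1$) and one under-germ ($h=-1$). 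Every corner thus contributes $-1$, the $m$ corners give $(-1)^m$, and $\prod_j \beta(e_j) = (-1)^m(-1)^m = 1$. Hence $\delta\beta$ is trivial on every face, which completes the existence step and, together with the torsor argument above, the proposition.
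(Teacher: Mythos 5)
Your proof is correct, and its skeleton matches the paper's: both read a sign assignment as a $0$-cochain solving $\delta s = \beta$ on the CW structure $X$ on $S^2$, both reduce existence to the cocycle condition $\delta\beta = 1$ via $H^1(S^2;\Z/2)=0$, and both get the count of two from connectedness. (Your torsor formulation of the count---solutions form a coset of $Z^0(X;\Z/2)=\{\pm 1\}$---is the cleaner statement of what the paper phrases as ``a choice of sign at one crossing determines the rest,'' and it silently corrects the paper's slip of writing $H^2(S^2)$ where $H^0(S^2)$ is meant.) Where you genuinely diverge is in the verification that $\delta\beta$ vanishes on a face, which is the geometric heart of the proposition. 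The paper proves it by induction around the boundary: it tracks the partial products $t_i = \beta(e_1)\cdots\beta(e_{i-1})$, claims $t_i$ records whether $h(e_1,0)=h(e_i,0)$, and checks the inductive step by a four-way case analysis on $e_{i-1}$. You instead factor $\beta(e) = -\,h(e,0)\,h(e,1)$ and regroup the product $\prod_j \beta(e_j)$ by corners rather than by edges: each appearance of a corner of $f$ occupies one quadrant of a crossing, the two germs flanking a quadrant are one over-strand and one under-strand (since the four germs at a double point alternate over, under, over, under), so each corner contributes $-1$; because a polygonal face has as many corners as edges, the sign $(-1)^m$ from the factorization cancels the sign $(-1)^m$ from the corners. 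This regrouping replaces the paper's induction and case analysis with a telescoping identity and makes the underlying mechanism---over/under alternation at each crossing---explicit; the paper's case analysis encodes exactly the same local fact, just distributed across the induction. Both treatments correctly handle faces whose attaching cycle repeats an edge or a vertex, since both work with the attaching cycle counted with multiplicity.
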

\begin{proof}
By (\ref{SignEq}), a choice of sign at one crossing determines the sign assignment for a connected diagram, if one exists.  Existence is a simple cohomological argument. Since a sign assigment is just a cochain $s\in C^0(S^2)$ with $\delta s = \beta $, such an $s$ exists if and only if $\beta \in C^1(S^2)$ is exact, and is unique up to multiplication by an element of $H^2(S^2)=\{\pm 1\}$.  Since $H^1(S^2) = 0$, $\beta$ is exact if and only if it is closed.

  We now show that $\beta$ is closed. Let $f$ be a $2$-cell with the incident $0$- and $1$-cells numbered
  and counterclockwise $v_1, \dotsc, v_n$ and $e_1, \dotsc, e_n$,
  respectively; see Figure~\ref{betacyclefig}.  We have $v_i = e_i(0) =
  e_{i-1}(1)$, where we set $v_{n+1} = v_1$ and $e_{n+1} = e_1$.  Let $t_1 = 1$ and 
  \[ t_i = \beta(e_1)\beta(e_2)\dotsm \beta(e_{i-1}) \qquad (1 < i\le n). \]
  We claim
  \[ t_i = \begin{cases}
    1 & h(e_1, 0) = h(e_i, 0) \\
    -1 & \text{otherwise.}
    \end{cases}
  \]
  It is trivial for $i = 1$.  The inductive step follows by $4$-way
  case analysis on $e_{i-1}$.  Since $h(e_{n+1}, 1) = h(e_1, 0)$, we
  have that
  \[ \beta(e_1)\beta(e_2)\dotsm \beta(e_n) = 1. \]
  Finally,
  \begin{align*}
    (\delta\beta)(f) &= \beta(e_1)\beta(e_2)\dotsm \beta(e_n) \\
    &= 1.\qedhere
  \end{align*}
\end{proof}

For a split diagram, sign assignments can be chosen on each connected
component independently.

Property (3) does not hold on the nose. If $D$ and $D'$ are related by changing a crossing, then the associated differentials $d$ and $d'$ are not identical---they differ by elements of $R$.  We will investigate this in Subsection~\ref{subsec:totalhom} after verifying that our new differential squares to zero and showing the filtered chain homotopy type of the $(C(D), d)$ does not depend on the choice of sign assignment.

		\begin{figure}
  \begin{center}
    \includegraphics{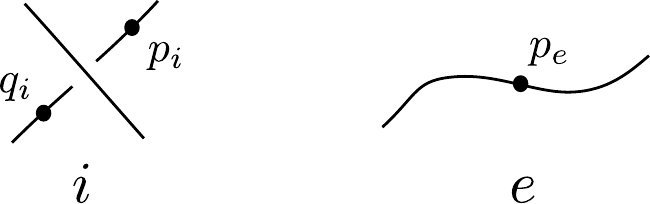}
  \end{center}
  \caption{We choose marked points $p_i$ and $q_i$ on the understrands
    at each crossing $i$ (left) and a marked point $p_e$ on each edge
    $e$ (right).\label{marksfig}}
\end{figure}

\begin{proposition}\label{complexprop}
  We have that $d^2 = 0$.
\end{proposition}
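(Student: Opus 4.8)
The plan is to expand $d^2 = d_0^2 + (d_0 d_1 + d_1 d_0) + d_1^2$ and kill each surviving piece. Khovanov's relation $d_0^2 = 0$ is already available, so the content is in the cross term $d_0 d_1 + d_1 d_0$ and in $d_1^2$. The organizing observation is that a single application of $d_0$ raises exactly one coordinate of a resolution from $0$ to $1$, while a single application of $d_1$ lowers exactly one coordinate from $1$ to $0$. Hence a component of $d^2$ from $V(I)$ to $V(J)$ can be nonzero only when $I$ and $J$ differ in exactly two coordinates, or when $I = J$, and I would treat these two cases separately.

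For $I\neq J$ I would run the usual commuting-square argument. If $I$ and $J$ differ at crossings $i$ and $j$, the relevant component of $d^2$ is the sum of the two composites passing through the two intermediate resolutions, i.e.\ the two ways of traversing a $2$-dimensional face of the cube. All the edge maps are induced by cobordisms supported near the disjoint crossings, so they commute by functoriality of $\calA$; the weight factors $s(i)(w^i_{\text{over}} - w^i_{\text{under}})$ and $s(j)(w^j_{\text{over}} - w^j_{\text{under}})$ depend only on the crossings and not on the order of traversal, so they agree on both composites; and the signs $(-1)^{n(I,J)}$ are exactly Khovanov's, so the two composites differ by an overall $-1$ and cancel. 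This simultaneously disposes of $d_1^2$ (two lowered coordinates) and of the distinct-crossing part of $d_0 d_1 + d_1 d_0$, leaving only the diagonal $I=J$ terms.

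The diagonal terms are the genuinely new part. A map $V(I)\to V(I)$ in $d^2$ can arise only from $d_0 d_1 + d_1 d_0$ at a single mixed crossing $i$, that is, from a saddle composed with its reverse. Using the Frobenius relations $\Delta\circ m = X_1 + X_2$ and $m\circ\Delta = 2X$ for $V$, each such loop equals $X_{p_i} + X_{q_i}$, where $p_i,q_i$ are the marked points on the two strands at $i$ and $X_p$ is multiplication by the circle through $p$. Thus the $V(I)\to V(I)$ part of $d^2$ is
\[ \sum_{\text{mixed } i} s(i)\,(w^i_{\text{over}} - w^i_{\text{under}})\,(X_{p_i} + X_{q_i}). \]
I would show this vanishes by regrouping it according to the circles of $I$: since $X_p$ depends only on the circle through $p$, the coefficient of the dot on a fixed circle $C$ is a sum of terms $\pm s(i)(w^i_{\text{over}} - w^i_{\text{under}})$, one for each passage of $C$ through a mixed crossing. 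The guiding principle, exactly as in the two-component case, is that the component label carried by $C$ can change only at a mixed crossing and must return to its starting value, so $C$ meets the mixed crossings an even number of times; the defining property $\delta s = \beta$ of the sign assignment—which makes the weight of each component appear with opposite signs at the two ends of every edge—then forces these contributions to cancel in pairs around $C$.

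\textbf{Main obstacle.} The distinct-crossing case is routine bookkeeping inherited from $d_0^2 = 0$. The real work is the diagonal case: first identifying each loop with $X_{p_i} + X_{q_i}$, and then verifying that the resulting operator is zero. The delicate point is the sign-and-weight accounting along each circle, where one must use $\delta s = \beta$ precisely as it was engineered together with the parity of mixed crossings met by $C$. This is exactly where the sign assignment and the marked points $p_i, q_i$ (and $p_e$) earn their keep, and where I expect the only real difficulty to lie.
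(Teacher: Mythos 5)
Your overall decomposition of $d^2$, the commuting-face cancellation for the components where the two resolutions differ at two crossings, and the identification of each diagonal loop with $X_{p_i}+X_{q_i}$ are exactly the steps in the paper's proof. The step that fails as written is the last one. The parity principle you invoke---that a circle $C$ meets the mixed crossings an even number of times because its component label must return to its starting value---is valid only when the labels appearing along $C$ take two values (e.g.\ for two-component links); with three or more distinct weights the label sequence can return to its start after an odd number of changes ($1\to 2\to 3\to 1$), and, more to the point, the crossing contributions $s(i)(w^i_{\text{over}}-w^i_{\text{under}})$ then need not cancel \emph{in pairs}: the model computation is $(w_1-w_2)+(w_2-w_3)+(w_3-w_1)=0$, a telescoping identity in which no two summands are opposite. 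Since the proposition is asserted for arbitrary links and weightings, ``even count plus pairwise cancellation'' is not a correct mechanism.

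The repair is to promote your parenthetical remark to the actual bookkeeping, which is what the paper does. Each resolution arc at a crossing $i$ joins one over-strand edge $e_o$ to one under-strand edge $e_u$, and the marked points $p_i$ (on the arc), $p_{e_o}$ and $p_{e_u}$ all lie on the same circle of the resolution, so
\[ s(i)\bigl(w^i_{\text{over}}-w^i_{\text{under}}\bigr)X_{p_i} \;=\; s(i)\,h(e_o,\epsilon_o)\,w_{e_o}X_{p_{e_o}} \;+\; s(i)\,h(e_u,\epsilon_u)\,w_{e_u}X_{p_{e_u}}, \]
where $\epsilon_o,\epsilon_u$ record which end of each edge sits at $i$; this identity also holds (as $0=0$) at non-mixed crossings. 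Summing over all crossings and both arcs at each crossing, and then regrouping by edges of the diagram rather than by circles, each edge $e$ contributes
\[ \bigl(s(e(0))h(e,0)+s(e(1))h(e,1)\bigr)\,w_e\,X_{p_e}, \]
and this coefficient vanishes precisely by the defining condition $\delta s=\beta$ of a sign assignment. This edge-by-edge resummation requires no parity statement, is insensitive to how many distinct weights occur along a circle, and automatically handles circles that pass through the same crossing twice. With that substitution in place of the pairing argument, your proof coincides with the paper's.
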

\begin{proof}
  Fix a resolution $I$ and let $x \in V(I)$.  The terms of $d^2(x)$
  lie in $V(K)$ where $K$ differs from $I$ in exactly two positions or
  $K = I$ itself.  We study these two cases.
  
  \textbf{Case 1}.  Let $K$ be a resolution that differs from $I$ in
  exactly two positions $i, j$ with $i < j$.  Let $J$ differ from $I$
  at $i$, and $J'$ differ from $I$ at $j$.  Then $I, J, J'$ and $K$
  are the four vertices of a face of the hypercube of resolutions.  By
  functoriality of $\calA$, we have that $\calA(J,K) \calA(I,J) =
  \calA(J',K)\calA(J',K)$.  The endomorphism $d_1$ uses the usual
  Khovanov sign assignments, so the two paths around the face have
  different signs.  Namely, we have that $n(I,J) = n(J',K)$ and
  $n(J,K) = - n(I,J')$.  The weights on the cobordism maps in
  $d_0$ and $d_1$ depend only on which crossing is changed, not
  the edge of the cube.  Denote the weights involved by $c(k)$,
  where
  \[ c(k) = \begin{cases}
    1 & I(k) = 0 \\
    s(k) (w^k_\text{over} - w^k_\text{under}) & I(k) = 1.
  \end{cases} \]
   The terms of
  $d^2(x)$ in $V(K)$ are
  \begin{align*}
    &\quad c(i)c(j) ((-1)^{n(I,J) + n(J,K)} \calA(J,K) \calA(I,J)(x) \\
    &\qquad \qquad \qquad  + (-1)^{n(I,J') + n(J',K)} \calA(J',K)\calA(I,J')(x) \\
    &= c(i)c(j) (-1)^{n(I,J) + n(J,K)} (\calA(J,K) \calA(I,J)(x) - \calA(J',K) \calA(I,J')(x)) \\
    &= 0.
  \end{align*}
  
  \textbf{Case 2}.  The terms of $d^2(x)$ in $V(I)$ are
  \[ \sum_{i=1}^n s(i)(w_\text{over} - w_\text{under}) \calA(J_i,I)\calA(I,J_i)(x), \]
  where $J_i$ is the resolution which differs from $I$ solely at the
  position $i$.  We choose
  marked points on the under-strands at each crossing and each edge,
  see Figure~\ref{marksfig}.  Straightforward computation shows that
  $\calA(J_i,I)\calA(I,J_i) = X_{p_i} + X_{q_i}$.  There is one under-strand
  edge and one over-strand edge in the circle containing $p_i$, and
  similarly for $q_i$, so we can rewrite the above sum as
  \begin{align*}
    &\sum_{i=1}^n s(i)(w_\text{over} - w_\text{under})(X_{p_i} + X_{q_i}) \\
    &= \sum_{e\in X^1} s(e(0))h(e,0)w_eX_{p_e} + s(e(1))h(e,1) w_eX_{p_e} \\
    &= \sum_{e\in X^1} (s(e(0))h(e,0) + s(e(1))h(e,1))w_eX_{p_e} \\
    &= 0,
  \end{align*}
  where $w_e$ denotes the weight of the component containing the edge
  $e$ and the final equality follows from the definition of a sign
  assignment.
\end{proof}

	\subsection{Change of sign assignment}
		While finding a sign assignment $s$ is crucial for defining the
complex over rings where $2 \neq 0$, different choices produce
isomorphic complexes. Indeed, consider a connected diagram $D$, weight
$w$, and sign assignment $s$ producing the complex $(C(D), d = d_0 +
d_1)$. Then taking the other sign assignment, $-s$, yields the
differential $d' = d_0 - d_1$ on the same group of chains
$C(D)$. Since $d_0$ fixes $g$-grading, and $d_1$ lowers it by $1$, the
endomorphism
\begin{align*}
\phi: C(D) &\rightarrow C(D) \\
x &\mapsto (-1)^{g(x)} x
\end{align*}
has the property that $d \phi = \phi d'$. That is, $\phi$ is an
invertible chain map between $(C(D),d)$ and $(C(D),d')$.

Next, consider the case when $D$ is possibly split and $s$ and $s'$
are two sign assignments.  Then, since $\calA$ is a monoidal functor,
the complexes $(C(D), d)$ and $(C(D), d)$ each decomposes into a
tensor product of complexes indexed over the components of $D$.  The
above analysis gives a chain equivalence $\phi$ for each component,
and their tensor product gives an invertible chain map between $(C(D),
d)$ and $(C(D), d')$.

Henceforth, we will often suppress the choice of a sign assignment,
writing $C(D,w)$ to indicate one of the two possible complexes.

	\subsection{Total homology}
        \label{subsec:totalhom}
		
We now show that changing a crossing doesn't affect the total homology of $(C(D),d)$, so long as the relevant weight $w_\text{over}-w_\text{under}$ is invertible. Of course, changing the crossing does {\em not} preserve the $g$-filtration on $C$.

\begin{figure}
  \begin{center}
    \includegraphics{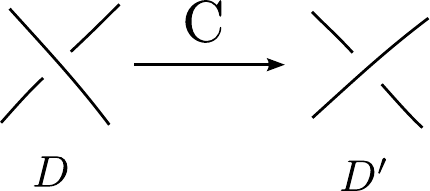}
  \end{center}
  \caption{The crossing change move C.\label{Cfig}}
\end{figure}

\begin{proposition}\label{CrossingChangeProp}
  Let $D$ and $D'$ be diagrams for links $L$ and $L'$ related by changing a crossing $i$ between components $c$ and $d$. Let $w$ be a weighting for $L$, and write $w'$ for the induced weighting on $L'$. Then if $w_c - w_d$ is invertible in $R$, the complexes $C(D, w)$ and $C(D', w')$ are isomorphic as relatively $\ell$-graded chain complexes.
\end{proposition}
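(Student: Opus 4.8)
The plan is to produce $\Phi\colon C(D,w)\to C(D',w')$ as an explicit diagonal (resolution-by-resolution) rescaling, exploiting the fact that a crossing change at $i$ merely interchanges the two resolutions there while leaving every cobordism map untouched. Concretely, switching over- and under-strand at $i$ swaps the $0$- and $1$-resolutions at that crossing, so the complete resolution $I$ of $D$ and the resolution $\sigma(I)$ of $D'$ obtained by flipping the $i^{\text{th}}$ digit consist of exactly the same circles; this gives a canonical identification $V(I)\iso V(\sigma(I))$ under which every map $\calA$ agrees on the two sides. Since the circle configurations match, $p(I)=p(\sigma(I))$ and $\deg$ is unchanged, so the two $\ell$-gradings differ only by the global constant $\mathrm{writhe}(D)-\mathrm{writhe}(D')=\pm 2$; thus $\sigma$ identifies the underlying objects as relatively $\ell$-graded $R$-modules (it will not respect the $g$-filtration, consistent with the remark preceding the statement).

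All the content is therefore in the coefficients. Transporting $d$ and $d'$ through $\sigma$, both total differentials run over the same cobordisms, but the scalar attached to a given cobordism can differ for three reasons: the Koszul sign $(-1)^{n(I,J)}$ is reindexed, and in fact flips on every edge at a crossing $k>i$ (flipping digit $i$ changes the parity count) while being unchanged for $k\le i$; the sign assignment must change, because switching the crossing flips $h(e,\cdot)$ on the edges incident to $i$ and so toggles $\beta$ there; and at $i$ itself the roles of $d_0$ and $d_1$ are interchanged, turning a coefficient $\pm1$ into one of the form $\pm(w_c-w_d)$ and conversely, because $w^i_{\mathrm{over}}-w^i_{\mathrm{under}}$ changes sign. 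The key structural step is to pin down the new sign assignment: I would take $s_{D'}$ to be $s_D$ multiplied by the $0$-cochain that is $-1$ only at the $0$-cell $i$. Its coboundary is $-1$ exactly on the edges meeting $i$, which is precisely $\beta_{D'}/\beta_D$ by the toggling just described, so $\delta s_{D'}=\beta_{D'}$; by the sign-assignment classification proved above this is the (essentially unique) sign assignment for $D'$, and different choices only change $\Phi$ by an automorphism.

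With this $s_{D'}$ in hand I would set
\[ \Phi|_{V(I)} = \epsilon(I)\,(w_c-w_d)^{I_i}, \]
where $I_i\in\{0,1\}$ is the digit at the switched crossing and $\epsilon(I)=(-1)^{\#\{k>i\,:\,I_k=1\}}$ (times a fixed sign absorbing that of $w^i_{\mathrm{over}}-w^i_{\mathrm{under}}$). The sign $\epsilon$ exactly cancels the reindexing discrepancy, and the factor $(w_c-w_d)^{I_i}$ cancels the $d_0\leftrightarrow d_1$ swap at $i$: conjugation by $\Phi$ multiplies the coefficient of the forward crossing-$i$ cobordism by $(w_c-w_d)$ and that of the backward one by $(w_c-w_d)^{-1}$. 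This is the unique place invertibility of $w_c-w_d$ is used, both to match the backward direction and to make $\Phi$ itself invertible. Checking $\Phi d=d'\Phi$ then reduces to an edge-by-edge comparison: for crossings $k\ne i$ it follows from $\epsilon$ matching the reindexing sign and from $s_{D'}(k)=s_D(k)$, while the switched crossing $i$ is the one genuine computation, where the identities $s_{D'}(i)=-s_D(i)$, $w^i_{\mathrm{over},D'}-w^i_{\mathrm{under},D'}=-(w^i_{\mathrm{over},D}-w^i_{\mathrm{under},D})$, and the power $(w_c-w_d)^{\pm1}$ conspire to make both the forward and the backward cobordism coefficients agree. I expect this last verification at crossing $i$—keeping the sign bookkeeping, the weight sign flip, and the unit rescaling simultaneously consistent for both cobordisms on the mixed edge—to be the main obstacle; once it goes through, $\Phi$ is an invertible, relatively $\ell$-graded chain isomorphism (applied identically to the connected component containing $i$, and as the identity on the rest of a split diagram), and the proposition follows.
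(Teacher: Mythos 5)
Your proposal is correct and is essentially the paper's own proof: the same choice of modified sign assignment ($s'(i)=-s(i)$, $s'(j)=s(j)$ for $j\neq i$), the same diagonal rescaling map (the paper's $f$ multiplies $V(I)$ by $(-1)^{a(I,i)}$ on the $0$-resolution summand and by $(-1)^{a(\cdot,i)}s(i)(w^i_{\mathrm{over}}-w^i_{\mathrm{under}})$ on the $1$-resolution summand, which is exactly your $\epsilon(I)(w_c-w_d)^{I_i}$ up to the absorbed sign), and the same edge-by-edge chain-map verification, with invertibility of $w_c-w_d$ entering only to invert the map. The lone cosmetic difference is that you justify the new sign assignment cohomologically via $\delta s'=\beta_{D'}$, where the paper simply asserts it.
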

\begin{proof}
  Let $s$ be a sign assignment for $D$.  A sign assigment $s'$  for $D'$ is given by $s'(j) =
  s(j)$ for $j\neq i$ and $s'(i) = -s(i)$.  Let $(C,d)$ be the complex $C(D,w,s)$, and let $(C',d')$ be
  the complex $C(D',w,s')$.  Let $C_0$ be the summand of $C$
  consisting of complete resolutions which include the $0$ resolution
  at crossing $i$, and let $C_1$, $C'_0$ and $C'_1$ be defined
  analgously.  Note that $C_0$ and $C'_1$ are identical as
  relatively $\ell$-graded complexes; similarly for $C_1$ and $C'_0$. 
  (The writhes of the diagrams differ by $2$, which will contribute
  a global shift between their $\ell$-gradings.)
  
  The components of $L$ are preserved by the crossing change,
  except $(w'^i_\text{over} - w'^i_\text{under}) = -(w^i_\text{over} -
  w^i_\text{under})$, since the upper and lower strands
  have been exchanged.  This means that
  \[ s(i)(w^i_\text{over} - w^i_\text{under}) = s'(i)(w'^i_\text{over} - w'^i_\text{under}). \]
  
 Before giving the chain map $f : C\to C'$, we must first introduce some notation.  Let $I$ be a resolution of $D$.  We write $I'$ to
  denote the same element of $\{0, 1\}^n$ interpreted as a resolution
  of $D'$.  We write $I_i$ for the resolution of $D$ that differs with
  $I$ solely at crossing $i$.  Note that $I$ and $I'_i$ are
  canonically isomorphic resolutions.  Let $J$ denote a resolution of
  $D$ that differs from $I$ at some crossing $j\neq i$.  Finally, let
\[ a(I, i) = \#\{ I(k) = 1 \,|\, i < k\le n \} \]
  be the number of one digits in $I$ above $i$.
  
  We define the map $f : C\to C'$ as follows.  
  	
\[ x \mapsto \begin{cases}
   & (-1)^{a(I, i)}x \text{ if } x \in V(I) \subset C_0 \\
   & (-1)^{a(I_1, i)} s(i)(w^i_\text{over} - w^i_\text{under}) \text{ if } x \in V(J) \subset C_1,
  \end{cases}
\]  
  To verify $f$ is a chain map, we use two easily verifiable facts about the signs: 
  \[(-1)^{a(I, i)} = (-1)^{a(I_i, i)} \] 
  and
\[ (-1)^{n(I, J)} (-1)^{a(J, i)} = (-1)^{n(I'_i, J'_i)} (-1)^{a(I, i)}. \]
  
  Consider $x\in V(I)\subset C_0$.  The image of
  $x$ under $fd$ or $d'f$ has components in $V(I')$ and $V(J'_i)$, for the resolutions $J$ differing from $I$ at one crossing.
  
  First, consider the $V(I')$-component of the image .  We have
  \begin{align*}
    fd(x) &= f((-1)^{n(I,I_i)} \calA(I, I_i)(x)) \\
    &= (-1)^{a(I_i, i)} (-1)^{n(I,I_i)} s(i)(w^i_\text{over} - w^i_\text{under}) \calA(I,I_i)(x) \\
    &= (-1)^{a(I, i)} (-1)^{n(I'_i,I')} s'(i)(w'^i_\text{over} - w'^i_\text{under}) \calA(I'_i, I')(x) \\
    &= d'((-1)^{a(I, i)} x) \\
    &= d'f(x).
  \end{align*}
  
  Next, consider the image in $V(J'_i)$ for some $J$ which differs from $I$
  at crossing $j$.  Let
\[ c(j) = \begin{cases}
  1 & I(j) = 0 \\
  s(i)(w^j_\text{over} - w^j_\text{under}) & I(j) = 1
\end{cases}
\]
  denote the coefficient of $\calA(I, J)$ in $d$.  It is the same as
  the coeffient of $\calA(I'_i, J'_i)$ in $d'$.  We have
\begin{align*}
  fd(x) &= f((-1)^{n(I,J)} c(j) \calA(I, J)(x)) \\
  &= (-1)^{a(J,i)} (-1)^{n(I,J)} c(j) \calA(I, J) \\
  &= (-1)^{a(I, i)} (-1)^{n(I'_i,J'_i)} c(j) \calA(I'_i, J'_i) \\
  &= d((-1)^{a(I, i)} x) \\
  &= df(x).
\end{align*}
  
  A similar analysis shows that $fd(x) = d'f(x)$ for $x\in
  V(I_1)\subset C_1$.
  
  Let $f': C'\to C$ be the chain map produced by reversing the roles of $D$ and $D'$.  The composition  
  \[ff'
  = f'f = s(i)(w^i_\text{over} - w^i_\text{under})\]
  is an isomorphism if $(w^i_\text{over} - w^i_\text{under})$ is invertible for all $i$. In that case,
  $f$ is a chain homotopy equivalence.
\end{proof}

\section{Reidemeister invariance}
\label{sec:inv}
	The proof that the filtered chain homotopy type of $C(D,w,s)$ is invariant under the Reidemeister moves parallels  the standard proof that the Khovanov chain complex is invariant. We divide the complex into the summands corresponding to the $2,4,$ or $8$ ways of resolving the crossings involved in the move, and cancel isomorphic summands along components of the differential. This is complicated slightly by the $d_{1}$ terms which prevent the natural summands of $C$ from being subcomplexes; the post-cancellation differential is not merely a restriction of the original one. The new differential is provided by the following standard cancellation lemma.

	\begin{lemma}
Let $(C,d)$ be a chain complex. Suppose that $C$, viewed as an $R$-module, splits as a direct sum $V \oplus W \oplus C'$. Let $d_{WV}$ denote the component mapping of $d$ mapping to $V$ from $W$, and similarly for other components.  If $d_{WV}$ is an isomorphism, then $(C,d)$ is chain homotopy equivalent to $(C',d')$ with $$d' = d_{C'C'}-d_{C'V}d_{WV}^{-1}d_{WC'}.$$
\end{lemma}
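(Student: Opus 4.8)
The plan is to prove this standard cancellation (Gaussian elimination) lemma by exhibiting the chain homotopy equivalence explicitly, with maps going in both directions, and then checking the required identities directly. The key observation is that if $d_{WV}$ is an isomorphism, then the piece of the complex consisting of $V$ and $W$ joined by $d_{WV}$ is acyclic, so it should deformation-retract onto $C'$. First I would write $d$ in block form with respect to the decomposition $C = V\oplus W\oplus C'$, noting that the condition $d^2=0$ gives a collection of relations among the components $d_{VV}, d_{WV}, d_{C'V}, \dots$; the ones I expect to use are those arising from the $(W,C')$, $(C',C')$, $(V,\cdot)$, and $(\cdot, W)$ blocks of $d^2$.

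Next I would define the projection $\pi : C\to C'$ and inclusion $\iota : C'\to C$ that realize the equivalence. The inclusion should not be the naive one; to commute with the differentials it must be corrected by a term involving $d_{WV}^{-1}$, so I would set $\iota(c') = c' - d_{WV}^{-1}d_{WC'}(c')$ (landing in $V\oplus C'$) and let $\pi$ be the projection onto $C'$ along $V\oplus W$ composed with a similar correction. The formula for $d'$ in the statement is exactly $\pi\, d\, \iota$ computed on $C'$: the term $d_{C'C'}$ is the direct contribution, and $-d_{C'V}d_{WV}^{-1}d_{WC'}$ is the correction routing a class up into $V$, across via $d_{WV}^{-1}$, and back down through $d_{C'V}$. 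I would verify that $(C',d')$ is genuinely a complex, i.e. $(d')^2=0$, using the $d^2=0$ relations assembled in the previous step.

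Then I would check that $\pi$ and $\iota$ are chain maps, meaning $d'\pi = \pi d$ and $\iota d' = d\iota$, again by expanding into components and invoking the block relations from $d^2=0$; this is where the corrected (non-naive) forms of $\iota$ and $\pi$ earn their keep, since the correction terms are precisely what cancel the off-diagonal contributions of $d$. Finally I would produce the homotopies: $\pi\iota = \mathrm{id}_{C'}$ should hold on the nose, while $\iota\pi$ differs from $\mathrm{id}_C$ by $dH + Hd$, where the homotopy $H$ is built from $d_{WV}^{-1}$ (it sends the $V$-component isomorphically back through $W$ and vanishes elsewhere). The main obstacle, and the only genuinely nontrivial bookkeeping, is choosing the correction terms in $\iota$, $\pi$, and $H$ with the right signs and placements so that all four identities ($(d')^2=0$, the two chain-map conditions, and $dH+Hd = \iota\pi - \mathrm{id}$) hold simultaneously; once the block relations from $d^2=0$ are written out, each identity reduces to a short algebraic cancellation, but getting a single consistent set of formulas is the part that requires care. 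I note that this lemma is entirely standard (it is the algebraic content of \emph{cancellation} or \emph{Gaussian elimination} on a complex), so I would keep the verification terse and refer the reader to the analogous statement in the Khovanov-homology literature if a fully detailed check is desired.
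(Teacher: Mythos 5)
Your proposal is correct and follows essentially the same route as the paper: you define the same corrected inclusion $\iota_{C'} - d_{WV}^{-1}d_{WC'}$, the same corrected projection $\pi_{C'} - d_{C'V}d_{WV}^{-1}$, and the same homotopy $h = d_{WV}^{-1}$, then verify the same four identities (the two chain-map conditions, $gf = \mathrm{id}_{C'}$, and the homotopy identity) using the relations packaged in $d^2 = 0$. The only cosmetic differences are that you phrase $d'$ as $\pi\, d\, \iota$ rather than as the pullback of $d$ along the inclusion, and your homotopy carries the opposite sign convention, neither of which affects the argument.
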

\begin{proof}
Let $f: C' \rightarrow C$, $g: C \rightarrow C'$, and $h: C \rightarrow C$ be defined by

$$f = \iota_{C'} - d_{WV}^{-1} d_{WC'}, \qquad g = \pi_{C'} -  d_{C'V} d_{WV}^{-1}, \mbox{\;\;\;and\;\;\;} h = d_{WV}^{-1}, $$
where $\iota$ and $\pi$ denote inclusion and projection with respect to the direct sum decomposition of $C$. The map $f$ is an isomorphism onto its image, since the second term in $f$ merely adds a $V$-component. The image of $f$ turns out to be a subcomplex, and the new differential $d'$ is merely the pullback of $d$ along $f$.

We claim that  $f$ and $g$ are mutually inverse chain homotopy equivalences between $(C,d)$ and $(C',d')$. Specifically, the following four equations hold:
$$f d' = d f \qquad g d = d' g \qquad \mathbb{I}_{C'} = gf  \qquad \mathbb{I}_C = fg  + hd + dh$$
Verifying these is a routine exercise in applying the identities contained in the equation $d^2 = 0$, such as
\[ d_{WV}d_{VV} + d_{WC'}d_{C'V}+d_{WW}d_{WV} = 0. \qedhere \]
\end{proof}

If the complex $(C,d)$ is filtered and the cancelled map, $d_{WV}$ above, preserves filtration degree, then $d'$ will respect the induced filtration on $C'$ and the maps $f$ and $g$ will be filtered chain homotopy equivalences. This will be our situation in each of the Reidemeister moves below.

	\begin{proposition}\label{ReidInvProp}
  Let $D$ and $D'$ be two diagrams for a link $L$ related by a Reidemeister move of type I, II, or III. 
  Fix an $R$-weighting $w$ for $L$ and a sign assignment $s$ for the diagram $D$. 
  Then there exists a sign assignment $s'$ for the diagram $D'$ which agrees with the sign assignment for $D$ at all crossings uninvolved in 
  the Reidemeister move, and the complexes $C(D, w,s)$ and $C(D', w,s')$ are chain homotopy
  equivalent as $\ell$-graded, $q$-filtered complexes.
\end{proposition}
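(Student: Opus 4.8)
The plan is to follow the classical Gaussian-elimination proof of invariance of the Khovanov complex, treating the three moves in turn, and to carry along the two pieces of data absent from the undeformed setting: the filtration-lowering terms $d_1$ and the sign assignment. For a move involving $k$ crossings ($k=1,2,3$) I would split $C(D,w,s)$, as an $R$-module, according to the $2^k$ local resolutions of those crossings, and then, exactly as in the undeformed theory, cancel the isomorphic summands paired by the saddle components of $d_0$ that merge or split the trivial circles created by the move, using the cancellation lemma above. The reduced complex should be $C(D',w,s')$ up to the overall relative $\ell$-shift coming from the change of writhe, and the maps $f,g$ produced by the lemma will be the desired filtered chain homotopy equivalences.

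The structural observation that makes the filtered refinement work is that $d_0$ and $d_1$ move oppositely along the cube: $d_0$ raises the homological grading (it points ``up'', from $I$ to $J$ with $|J|=|I|+1$) and preserves $g$, whereas $d_1$ lowers it (it points ``down'', via the reversed cobordism $\calA(J,I)$) and lowers $g$ by one. The isomorphisms we cancel are precisely the $d_0$ saddle maps associated to a trivial circle; these connect two resolutions differing at a single crossing and point up the cube. Choosing the roles so that $V$ is the higher-resolution summand and $W$ the lower one, the cancelled map $d_{WV}$ is exactly this $d_0$ up-map, while the $d_1$ component between the same pair points the other way and so contributes to $d_{VW}$, not to $d_{WV}$. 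Hence $d_{WV}$ is purely a component of $d_0$, preserves the $g$-filtration, and the filtered form of the cancellation lemma applies verbatim. (Should iterated cancellation ever decorate a subsequent $d_{WV}$ with a strictly filtration-lowering correction, its leading term is still the invertible $d_0$ map, so $d_{WV}$ remains a filtered isomorphism and the conclusion is unaffected.)

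What remains is bookkeeping of exactly the kind in the undeformed proof, now with $d_1$ present. First I would produce $s'$: set $s'=s$ at every crossing not involved in the move and extend over the new crossings. Existence and consistency follow from the sign-assignment proposition, since altering the diagram inside a disk changes $\beta$ only locally while the global obstruction in $H^1(S^2)$ still vanishes; propagating the relation $\delta s'=\beta'$ from a single seed outside the disk forces $s'=s$ throughout the shared region (where $\beta'=\beta$) and determines $s'$ inside. Then, using Proposition~\ref{complexprop} to guarantee $d^2=0$ throughout, I would compute the post-cancellation differential from $d' = d_{C'C'}-d_{C'V}d_{WV}^{-1}d_{WC'}$ and check that it agrees with the deformed differential $d_0+d_1$ of $C(D',w,s')$, including the internal-grading shift from the writhe.

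The main obstacle is the third move. For Reidemeister III the local complex has eight resolutions and, when the three strands lie on different components, every internal edge carries a $d_1$ term, so the zig-zag corrections $d_{C'V}d_{WV}^{-1}d_{WC'}$ generate many new arrows; matching these, together with the induced signs $(-1)^{n(I,J)}s(i)(w_\text{over}-w_\text{under})$, against the differential of the reduced diagram is where the real labor lies. I expect this to reduce, as in the classical case, to identifying both $C(D)$ and $C(D')$ after cancellation with a single common complex, the filtration never obstructing the identification because, by the observation above, it is always the filtration-preserving $d_0$ part that is being cancelled.
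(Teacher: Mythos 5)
Your proposal is correct and follows essentially the same route as the paper: decompose the complex by the $2^k$ local resolutions (splitting the circle-bearing summand by divisibility by that circle), cancel the $d_0$ saddle isomorphisms via the cancellation lemma, observe that the cancelled maps preserve the $g$-filtration since $d_1$ only contributes arrows in the opposite direction, and for R3 reduce both $C(D,w,s)$ and $C(D',w,s')$ to a common small complex. The paper carries out the same plan, including the explicit sign/weight bookkeeping for R3 that you correctly identify as the main labor.
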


In Section~\ref{sec:ourconst}, we saw that different sign assignments produce isomorphic complexes. Since any two diagrams for a link are related by a sequence of Reidemeister moves, 
this proposition implies that that the $\ell$-graded $q$-filtered chain homotopy type of the complex $C(D, w, s)$ is also independent of the choice of planar diagram, and hence an invariant of the $R$-weighted link $(L,w)$. This establishes that the associated spectral sequence, called $E_k(L,w)$ in Theorem~\ref{SseqThm}, is an invariant of $(L,w)$.

\begin{proof}

The proof for each of the three Reidemeister moves is similar. We first decompose the complex into
 summands sitting over each of the $2^k$ different resolutions of the crossings implicated in the $k$-th move.
One of these resolutions contains an isolated circle, and we split the complex over that resolution further 
according to whether or not the monomial contains that circle. We then identify two summands $V$ and $W$ for which
$d_{WV}$ is a $q$-grading-preserving isomorphism, and apply the cancellation lemma.

\begin{figure}
  \begin{center}
    \includegraphics{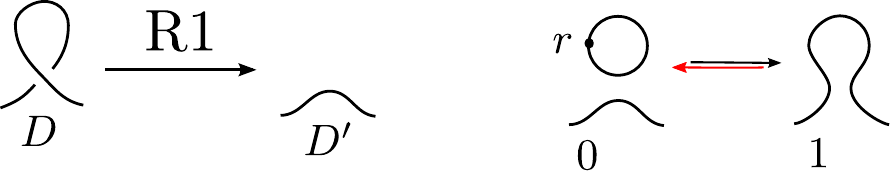}
  \end{center}
  \caption{Left is the first Reidemeister move R1.  Right is chain complex for the diagram $D$, split into two summands corresponding to the two resolutions of the pictured crossing.\label{R1fig}}
\end{figure}

\textbf{R1}
Consider two diagrams $D$ and $D'$ for a link $L$ in Figure~\ref{R1fig}. 
Let $s$ be a sign assignment for $D$. It can be verified easily that the restriction of $s$ to the
vertices of the diagram for $D'$ yields a valid sign assignment $s'$.

Let $(C,d)$ be the complex $C(D,w,s)$, and let $(C',d')$ be the complex $C(D',w,s')$. 
Let $C_0$ be the summand of $C$ corresponding to complete resolutions which include the $0$-resolution at the pictured crossing, and let $C_1$ be the summand of $C$ corresponding to complete resolutions which include the $1$-resolution at the pictured crossing. Let $C_0^-$ and $C_0^+$ be the summands of $C_0$
  spanned by monomials divisible and not divisible, respectively, by
  the circle $x_r$ corresponding to the pictured circle.
  
 Since the component of $d$ mapping from $C_0^+$ to $C_1$ is just merging in the $1$ on the pictured circle, it is an isomorphism. Hence we may apply the cancellation lemma with with $V = C_0^+$ and $W=C_1$. Since $C_0^+$ and $C_0^-$ have the same resolution at the pictured crossing, there is no component of $d$ mapping from one to the other. Hence the new complex is just $C_0^-$ with the restriction of the original differential. Since extra circle never interacts with the remainder of the diagram for $L$, this complex $(C_0^-,d)$ is isomorphic to the post-move complex $(C',d)$.

\begin{figure}
  \begin{center}
    \includegraphics{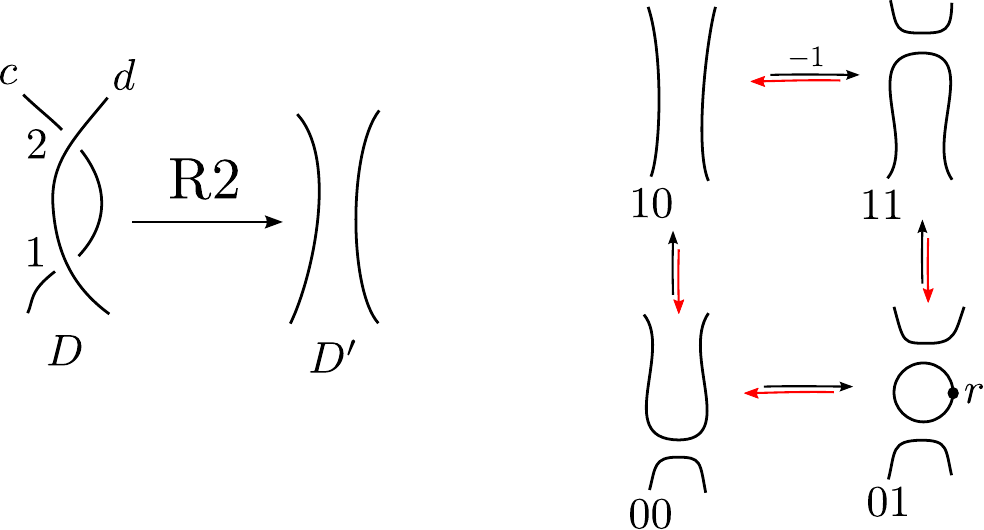}
  \end{center}
  \caption{Left is the second Reidemeister move R2.  Right is chain complex for the diagram $D$, split into four summands corresponding to the resolutions of the pictured crossings.\label{R2fig}}
\end{figure}

\textbf{R2}
Consider two diagrams $D$ and $D'$ for a link $L$ in Figure~\ref{R2fig}. 
Let $s$ be a sign assignment for $D$. It can be verified easily that the restriction of $s$ to the
vertices of the diagram for $D'$ yields a valid sign assignment $s'$.

Let $D_{ij}$ with $i, j\in \{0, 1\}$ denote
  the diagrams obtained by resolving the crossings involved in the
  Reidemeister move in $D$.  Let $C_{ij} = C(D_{ij}, w,s)$. Let
  $C_{01}^-$ and $C_{01}^+$ be the summands of $C_{01}$ spanned by
  generators divisible and not divisible, respectively, by the circle
  $x_r$ corresponding to the pictured circle. The four
  summands $C_{00}, C_{11}, C_{01}^+$ and $C_{01}^-$ are all
  naturally isomorphic, and the summand $C_{10}$ is isomorphic to the post-move complex $C' = C(D',w,s')$.
  
 We will apply the cancellation lemma with $V = C_{00}\oplus C_{01}^+$ and 
  $W = C_{01}^-\oplus C_{11}$. The component of $d$ from $V$ to $W$ is just the original Khovanov differential $d_0$, and it is block diagonal: $C_{01}^+$ maps to $C_{11}$ isomorphically (merging in a $1$) and $C_{00}$ maps to $C_{01}^-$ isomorphically (splitting of an $x$).
  
  The cancelled complex is just $C_{10}$, with differential 
  
  $$d_{C_{10}C_{10}} - d_{C_{10}V}d_{WV}^{-1}d_{WC_{10}}.$$
  
  But $d_{WC_{10}}$ lands on $C_{11}$, which is carried to $C_{01}^+$ by $d_{WV}^{-1}$, and $d$ has no component from $C_{01}^+$ to $C_{10}$. Hence the new differential is just the restriction of the old, and we have
  
  $$(C,d) \cong (C_{10}, d \vert_{C_{10}}) \cong (C',d')$$
  
\begin{figure}
  \begin{center}
    \includegraphics{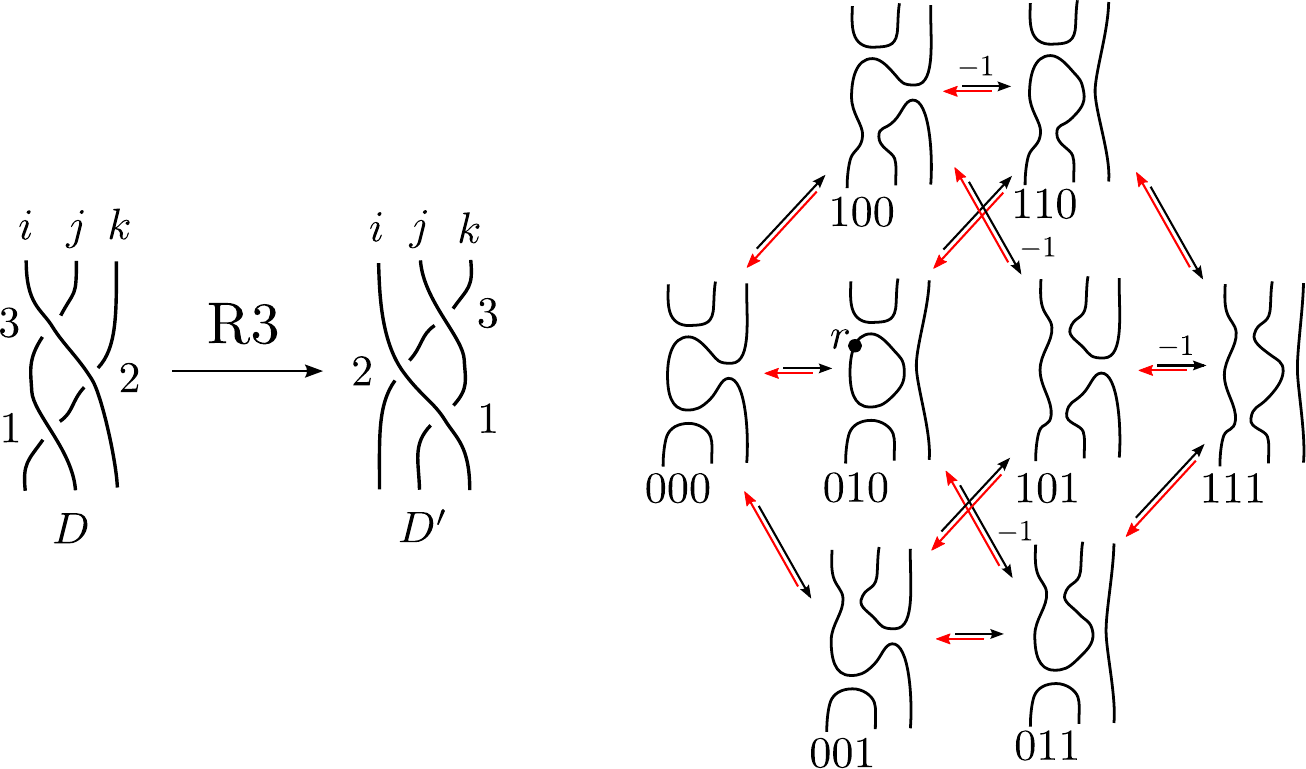}
  \end{center}
  \caption{Left is the third Reidemeister move R3.  Right is chain complex for the diagram $D$, split into eight summands corresponding to the resolutions of the pictured crossings.\label{R3fig}}
\end{figure}

\textbf{R3}
Reidemeister 3 is more complicated, and we must keep track of the signs in Khovanov's cube, the sign assignment $s$, and the weights.

Consider the diagrams $D$ and $D'$ in Figure~\ref{R3fig}. Label the strands $i$, $j$, and $k$, from left to right along the top of $D$. Denote by $w_i, w_j,$ and $w_k$ the weights of their components. Order the crossings up the page $1$, $2$, and $3$. Using Khovanov's sign assignment, the edges in the cube of resolutions for $D$ labeled $-1$ in the figure have a negative sign in the differential: $(-1)^{n(I, J)} = -1$. ($100\leftrightarrow 110, 100\leftrightarrow 101, 010\leftrightarrow 011, 101\leftrightarrow 111$.)

Choose a sign assigment $s$ for $D$ such that
\[ s(1) = s(3) = 1 \text{ and } s(2) = -1. \]
A choice of sign at one crossing determines the sign assignment on that component of the diagram by (\ref{SignEq}).  Take the sign assignment $s'$ for $D'$ which agrees with $s$ on the crossings not implicated in the Reidemeister move. Again, (\ref{SignEq}) implies
\[ s'(1) = s'(3) = -1 \text{ and } s'(2) = 1. \]

Let $(C, d) := C(D, w, s)$ and $(C', d') := C(D', w, s')$.  The weights
$c(j) = s(j)(w_\text{over} - w_\text{under})$ of the reverse edge maps
in $d_1$ evaluate to 
\begin{align*}
  c(1) &= w_j - w_k \\
  c(2) &= w_k - w_i \\
  c(3) &= w_i - w_j,
\end{align*}
at the three pictured crossings, and the weights $c'(j)$ in $d'_1$ are
\begin{align*}
  c'(1) &= w_j - w_i \\
  c'(2) &= w_i - w_k \\
  c'(3) &= w_k - w_j.
\end{align*}

First, we will simplify the complex $(C,d)$. As in the previous parts, let $C_{010}^-$ and $C_{010}^+$ be the summands of $C_{010}$
  spanned by monomials divisible and not divisible, respectively, by
  the circle $x_r$ corresponding to the pictured circle.

We apply the cancellation lemma with

$$V = C_{000} \oplus C_{010}^+ \qquad W = C_{010}^- \oplus C_{011}.$$
The component of $d$ from $V$ to $W$ is just the Khovanov differential $d_0$, and it is block diagonal: $C_{000}$ maps to $C_{010}^-$ isomorphically (splitting off an $x$) and $C_{010}^+$ maps to $C_{011}$ isomorphically (merging in a $1$, with a minus sign from the cube). The reduced complex will have underlying abelian group 

$$C_{\text{red}} = C_{100}\oplus C_{001} \oplus C_{110} \oplus C_{101} \oplus C_{111}.$$

 After chasing the diagram to find the maps into $V$ and the maps out of $W$, you will find that the correction term $d_{C_{\text{red}}V}d_{WV}^{-1}d_{WC_{\text{red}}}$ has four components.

\begin{eqnarray*}
C_{001} &\xrightarrow{-1} C_{110} \\
C_{111} &\xrightarrow{w_k-w_j} C_{110} \\
C_{110} &\xrightarrow{w_j-w_k} C_{100} \\
C_{110} &\xrightarrow{w_j-w_k} C_{001}.
\end{eqnarray*}

Each map is induced by the obvious cobordism relating the resolutions, weighted by some element of $R$. Subtracting these from the restriction of the original differential $d$ to $C_{\text{red}}$ yields the complex pictured in Figure~\ref{R3cancelledfig}.  Here, the edge labels give the total coefficient of the forward or reverse edge maps in $d_{\text{red}}$.  The absence of a label on a forward edge maps the coefficient is $+1$.  The label $i - j$, for example, denotes the coefficient $w_i - w_j$.

\begin{figure}
  \begin{center}
    \includegraphics{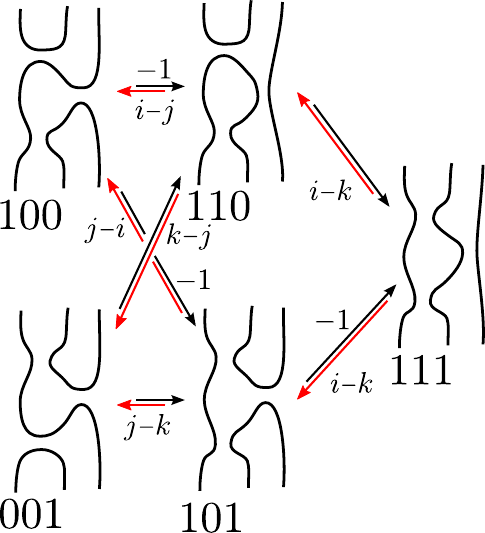}
  \end{center}
  \caption{A reduced chain complex for the diagram $D$. \label{R3cancelledfig}}
\end{figure}

\begin{figure}
  \begin{center}
    \includegraphics{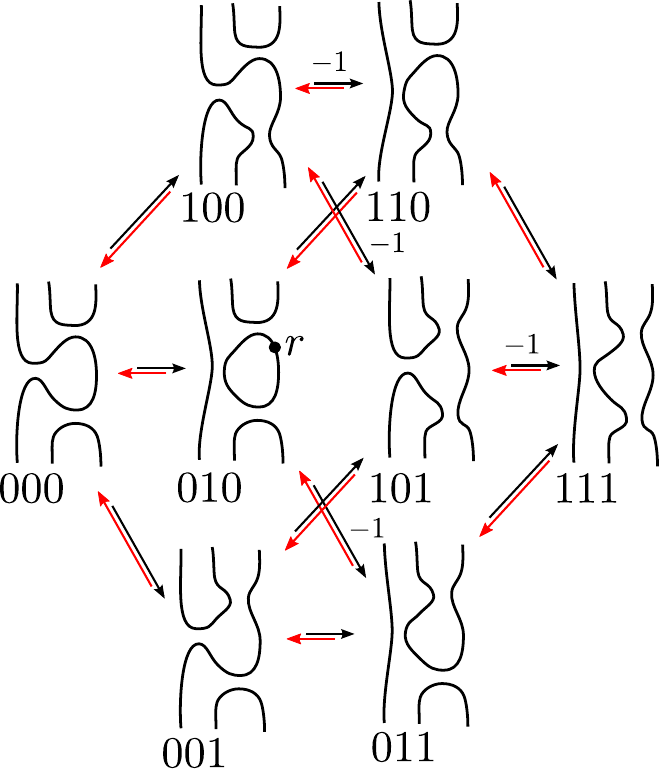}
  \end{center}
  \caption{The chain complex for the post-R3 diagram $D'$. \label{R3postfig}}
\end{figure}

\begin{figure}
  \begin{center}
    \includegraphics{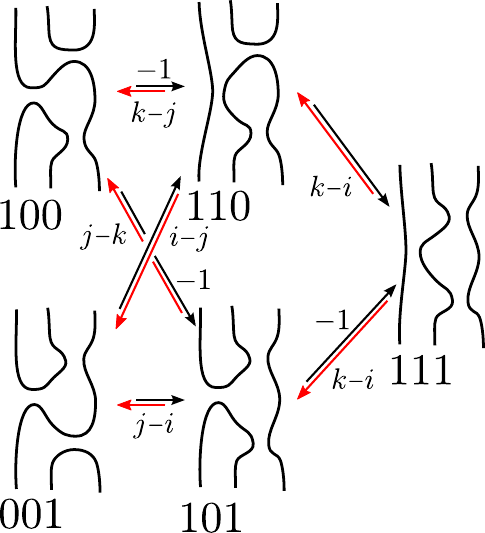}
  \end{center}
  \caption{A reduced chain complex for the post-R3 diagram $D'$. \label{R3postcancelledfig}}
\end{figure}

The complex $(C',d')$ can be simplified using a similar cancellation. The relevant resolutions are drawn in Figure~\ref{R3postfig}. Apply the cancellation lemma with
$$V = C'_{000} \oplus C'^+_{010}  \qquad W = C'^-_{010}  \oplus C'_{011}.$$
The resulting complex, $C'_{\text{red}}$ is pictured in Figure~\ref{R3postcancelledfig}. It contains all the same resolutions as $C_{\text{red}}$, the only difference is that all of the maps between pictured summands have reversed signs. The map $\phi: C_{\text{red}} \rightarrow C'_{\text{red}}$, defined by

\begin{eqnarray*}
C_{100} &\xrightarrow{1} C'_{001} \\
C_{001} &\xrightarrow{1} C'_{100} \\
C_{110} &\xrightarrow{-1} C'_{101} \\
C_{101} &\xrightarrow{-1} C'_{110} \\
C_{111} &\xrightarrow{1} C'_{111}
\end{eqnarray*}

is an invertible chain map. The sequence $C \cong C_{\text{red}} \cong  C''_{\text{red}} \cong C'$ yields the desired isomorphism for diagrams related by Reidemeister 3.
\end{proof}

	We can now prove that the total homology of the complex for a link is just the Khovanov homology of the disjoint union of its components. This completes the proof of Theorem~\ref{SseqThm}

\begin{theorem}
Let $(L,w)$ be an $R$-weighted link, and suppose that for each pair of components $i$ and $j$ with distinct weights, the difference $w_i - w_j$ is invertible in $R$.  Let $D$ be any diagram for $L$. Let $L^{(r)}$ denote the sublink of $L$  consisting of those components with weight $r$. Then the spectral sequence converges to

  \[ H^*(C(D, w)) \iso Kh^*\left ( \coprod_{r\in R} L^{(r)}; R \right ) \]
  
\end{theorem}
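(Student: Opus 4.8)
The plan is to compute the total homology $H^*(C(D,w))$ directly, since the spectral sequence arises from a bounded filtration (by $g$) on a bounded complex and therefore converges automatically to the homology of the total complex; the real content is thus the isomorphism $H^*(C(D,w)) \cong Kh^*(\coprod_r L^{(r)}; R)$. I would build this on the two invariance results already in hand: Proposition~\ref{ReidInvProp} shows that $H^*(C(D,w))$ depends only on the $R$-weighted link $(L,w)$ and not on the chosen diagram, while Proposition~\ref{CrossingChangeProp} shows that the homology is unchanged when a crossing between components $c$ and $d$ is switched, provided $w_c - w_d$ is invertible. The hypothesis supplies invertibility at exactly the crossings between components of distinct weight, so I am free to switch any such crossing and to compose these switches with Reidemeister moves, all while preserving $H^*(C(-,w))$ up to (relatively $\ell$-graded) isomorphism.

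The topological heart of the argument is the claim that $(L,w)$ can be joined to the split weighted link $(\coprod_r L^{(r)}, w)$ using only these permitted moves. I would prove this by a layering argument: order the distinct weights $r_1, r_2, \dots$, and switch every crossing between $L^{(r_1)}$ and its complement so that $L^{(r_1)}$ always passes over. Then $L^{(r_1)}$ can be lifted off the rest of the diagram, separated by an embedded sphere, without altering its own isotopy type or that of the remainder. Iterating over the remaining weight classes splits all the $L^{(r)}$ from one another. Crucially, every crossing switched in this process joins components of \emph{distinct} weight, so each switch falls under Proposition~\ref{CrossingChangeProp}, while the connecting isotopies fall under Proposition~\ref{ReidInvProp}; crossings \emph{within} a single weight class (where $w_c - w_d = 0$ is not invertible) are never touched. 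Hence $H^*(C(D,w)) \cong H^*(C(D_{\mathrm{split}}, w))$ for a genuinely split diagram $D_{\mathrm{split}} = \coprod_r D^{(r)}$, where $D^{(r)}$ is a diagram for $L^{(r)}$.

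It then remains to identify $H^*(C(D_{\mathrm{split}}, w))$. On the split diagram every crossing lies within a single weight class, so at each crossing $w_\text{over} - w_\text{under} = 0$ and the endomorphism $d_1$ vanishes identically; thus $d = d_0$ is precisely Khovanov's differential. Because $\calA$ is monoidal, $C(D_{\mathrm{split}}, w)$ is the tensor product over $r$ of the ordinary Khovanov complexes $(C(D^{(r)}), d_0)$, which is by definition the Khovanov complex of the disjoint union $\coprod_r L^{(r)}$. Its homology is therefore $Kh^*(\coprod_r L^{(r)}; R)$, which closes the chain of isomorphisms and, together with the convergence of the spectral sequence to $H^*(C(D,w))$, proves the theorem.

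I expect the main obstacle to be the second paragraph: verifying cleanly that the full separation can always be realized by distinct-weight crossing switches alone, and that these switches (Proposition~\ref{CrossingChangeProp}) compose correctly with the Reidemeister moves (Proposition~\ref{ReidInvProp}) without disturbing the bookkeeping of weights and of the relative $\ell$-grading under which the isomorphisms are claimed. By contrast, the vanishing of $d_1$ on the split diagram, the monoidality identification, and the convergence of the filtration spectral sequence are all routine.
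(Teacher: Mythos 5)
Your proposal is correct and follows essentially the same route as the paper: both use Proposition~\ref{CrossingChangeProp} to switch distinct-weight crossings until the weight classes are layered (the paper orders the weights and puts higher classes entirely over lower ones in one pass; your peeling-off-one-class-at-a-time variant is the same idea), then use Reidemeister invariance to reach a genuinely split diagram, on which $d_1$ vanishes and the complex is Khovanov's complex for $\coprod_r L^{(r)}$. Your explicit remarks on convergence of the bounded filtration and on monoidality of $\calA$ are minor elaborations of points the paper leaves implicit.
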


\begin{proof}
Choose an arbitrary ordering $\succ$ on the set ${w_1,\dots,w_n}\subset R$ of weights. By Proposition \ref{CrossingChangeProp}, changing a crossing between components with distinct weights will produce a chain complex $C(D',w)$ with the same $l$-graded total homology. So we may change crossings until each component $i$ lies entirely over component $j$ whenever $w_i \succ w_j$. This produces a diagram $D'$ for some link $L'$, whose sublinks are still the $L^{(r)}$, now completely unlinked from one another. By repeated application of Reidemeister moves 1 and 2, we may slide these components off of one another until we get a diagram $D''$ for $L'$ with no crossings between $L^{(r)}$ and $L^{(r')}$ for $r \neq r'$. The differential for $C(D'',w)$ is the same as Khovanov's differential, since $d_1=0$, and $L'$ is just the disjoint union of the sublinks $L^{(r)}$.
\end{proof}

We can now give a stronger version of the rank inequality Corollary~\ref{CorRankBounds}.

\begin{corollary}
Let $\F$ be any field, and let $L$ be a link with components $K_1,\dots,K_m$. Then
\[ \rank^{\ell} Kh^*(L;\F) \geq \rank^{\ell+t} \otimes_{c=1}^m Kh^*(K_c;\F), \]
where each side is $\ell$-graded and the shift $t$ is given by
\[t =  \sum_{c<d}  2 \text{\emph{lk}}(L_c,L_d). \]
\end{corollary}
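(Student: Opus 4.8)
The plan is to run the spectral sequence of Theorem~\ref{SseqThm} with \emph{all} $m$ components assigned distinct weights, so that the abutment computes the Khovanov homology of the totally split link, and then to extract the stated graded rank inequality by comparing the $E_1$ and $E_\infty$ pages. First I would reduce to the case $|\F|\ge m$: if $\F$ is a finite field with fewer than $m$ elements, I pass to a finite extension $\F'$ with $|\F'|\ge m$, noting that $Kh^*(\,\cdot\,;\F')\cong Kh^*(\,\cdot\,;\F)\otimes_\F\F'$ and that the linking numbers, hence $t$, are insensitive to the coefficient field, so the graded ranks on both sides are unchanged. With $|\F|\ge m$ I choose pairwise distinct $w_1,\dots,w_m\in\F$; since $\F$ is a field, every difference $w_c-w_d$ is invertible. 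Because the weights are now all distinct, each sublink $L^{(r)}$ is a single component, so the convergence theorem identifies the total homology $H^*(C(D,w))$ with $Kh^*(\coprod_c K_c;\F)$, which by the K\"unneth formula over a field is $\bigotimes_{c} Kh^*(K_c;\F)$ with the $\ell$-grading adding across factors.

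Next I would extract the rank inequality from the filtration. Both $d_0$ and $d_1$ raise the $\ell$-grading by $1$, so $(C(D,w),d)$ is a cochain complex in the $\ell$-grading whose $g$-filtration is by $\ell$-graded subcomplexes; consequently every page $E_k(L,w)$ is $\ell$-graded. Since $E_{k+1}$ is the homology of $(E_k,d_k)$, its degree-$\ell$ part is a subquotient of the degree-$\ell$ part of $E_k$, so over a field $\rank^\ell E_{k+1}\le\rank^\ell E_k$ for every $\ell$. Therefore
\[ \rank^\ell Kh^*(L;\F)=\rank^\ell E_1 \ge \rank^\ell E_\infty=\rank^\ell H^*(C(D,w)), \]
the last equality because passing to the associated graded of a filtered $\F$-vector space preserves dimension in each $\ell$-degree. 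It then remains to match the $\ell$-grading that $H^*(C(D,w))$ inherits from $D$ against the natural tensor-product $\ell$-grading on $\bigotimes_c Kh^*(K_c;\F)$.

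The crux is this grading shift, and it is the step I expect to be most delicate. The grading $\ell(x)=\deg(x)-p(I)-\mathrm{writhe}(D)$ depends on $\mathrm{writhe}(D)$, and splitting the crossings of $D$ into self-crossings and mixed crossings gives $\mathrm{writhe}(D)=\sum_c\mathrm{writhe}(D_c)+t$, where $D_c$ is the sub-diagram traced out by $K_c$ and the signed count of crossings between $L_c$ and $L_d$ is $2\,\mathrm{lk}(L_c,L_d)$. I would then trace the $\ell$-grading through the chain of equivalences used in the convergence theorem. The Reidemeister equivalences of Proposition~\ref{ReidInvProp} preserve the absolute $\ell$-grading and so contribute no shift. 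Each crossing change of Proposition~\ref{CrossingChangeProp} identifies corresponding monomials but, because the two diagrams differ in writhe by $2$, shifts the absolute $\ell$-grading by $\mathrm{writhe}(D)-\mathrm{writhe}(D')$; this equals the change in $t$, since a crossing change between distinct components alters one linking number by $\pm1$ while leaving all self-writhes fixed. Summing over the crossing changes that unlink the components and using that the final split diagram $D''$ has all pairwise linking numbers zero (its sublinks lie in disjoint balls), the net shift is $\ell_{D''}-\ell_D=t(D)-t(D'')=t$. Hence a class in $\ell_D$-degree $\ell$ corresponds to a class in natural degree $\ell+t$ on the tensor product, so $\rank^\ell H^*(C(D,w))=\rank^{\ell+t}\bigotimes_c Kh^*(K_c;\F)$, and combining this with the page inequality above proves the claim.
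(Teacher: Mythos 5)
Your proposal is correct and follows essentially the same route as the paper: choose pairwise distinct weights (after enlarging $\F$ if necessary, with ranks unchanged under field extension), identify the abutment with the Khovanov homology of the split union via the convergence theorem, compare $\ell$-graded ranks of $E_1$ and $E_\infty$, and compute the grading shift $t$ by tracking how the writhe term in $\ell(x)=\deg(x)-p(I)-\mathrm{writhe}(D)$ changes under the crossing changes that split the link. Your telescoping of the shift through the crossing changes, and the observation that Reidemeister equivalences contribute no shift, match the paper's computation exactly.
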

\begin{proof}
Assume for the moment that the field $\F$ has more elements than $L$ has components, so we
can weight each component by a different element $w_c\in \F$. Then all differences will be invertible, so the above theorem characterizing the abutment of the spectral sequence applies. That would give an inequality of total ranks. To see the $\ell$-gradings, we need to compute the grading shift in the isomorphism relating $C(D,w)$ and $C(D',w)$. Recall the formula for the $\ell$ grading:
\[ \ell(x) = \deg(x) - p(I)  -  \mbox{writhe}(D). \] 
For a fixed monomial $x$ over a fixed resolution $I$, the terms $\deg(x)$ and $p(I)$ are the same before and after a crossing change; only the writhe differs. Each time we change a crossing between components $c$ and $d$, the writhe will shift by $\pm 2$ and the linking number $\mbox{lk}(L_c,L_d)$ will shift by $\pm 1$. (The linking numbers with other components remain unchanged.) Thus
\[ \ell(x) + \sum_{c<d} 2\mbox{lk}(L_c,L_d) = \ell(x') +  \sum_{c<d}  2\mbox{lk}(L'_c,L'_d), \]
where $x'$ is the same monomial viewed as a generator of $C(D',w)$, and $L'_c$ is the component of $L'$ which $L_d$ turns into. But the components of $L'$ are unlinked, so we ultimately have
\[ \ell(x') = \ell(x) +  \sum_{c<d}  2\mbox{lk}(L_c,L_d). \]

Now we address the size of $\F$. Since the differential in the chain complex computing $Kh(L)$ uses only $\pm 1$ coefficients, its rank is the same after a field extension. We may take a suitably large extension $\F'$ of $\F$, run the above argument for some choice of weights, and then note that $\rank_\F' Kh(L;\F') = \rank_\F Kh(L; \F)$.
\end{proof}

\section{Properties of spectral sequences}
\label{sec:ssreview}
	
We offer a quick review of spectral sequences, following
Serre \cite{serre}.  Let $(C,d)$ be a finitely generated chain
complex. A filtration $\calF$ on $C$ is an assignment to each element
$x\in C$ a filtration degree $p(x)\in\Z\cup\{-\infty\}$ such that
$p(x-y)\leq\max(p(x),p(y))$ and $p(dx)\leq p(x)$. We will occasionally
write $C^k$ for the $k^\text{th}$ piece of the filtration
$\calF^kC=\{x\in C\vert p(x)\leq k\}$. Homological algebra usually
concerns cycles and boundaries. The filtration provides notions of
approximate cycles and early boundaries:
\begin{eqnarray*}
Z_{r}^{k}&=&\left\{ x\in C^{k}\vert dx \in C^{k-r} \right \}\\
B_{r}^{k}&=&\left\{ dy\in C^{k}\vert y\in C^{k+r}  \right\}.
 \end{eqnarray*}
The spectral sequence corresponding to the filtration is a sequence
of chain complexes $\left(E_{r}^{k},d_{r}\right)$, called \emph{pages},
defined by 
\[
E_{r}^{k}=\slfrac{Z_{r}^{k}}{Z_{r-1}^{k-1}+B_{r-1}^{k}}.
\]

If $x$ is in $Z_{r}^{k}$, then $dx$ is in $Z_{r}^{k-r}$: by definition $dx\in C^{k-r}$, 
and $d(dx)=0$. The differential on $E_{r}^{k}$
is then given by taking the equivalence class: $d_{r}[x]:=[dx]$.
The remarkable property of this sequence is that each page is the
homology of the previous one: $E_{r+1}^{k}=H_{*}(E_{r}^{k},d_{r})$.

A spectral sequence is said to \emph{collapse on page} $l$ if $d_{r}=0$
for all $r\geq l$.

Since $C$ is finitely generated, there is some integer $N$ such that,
for all $r > N$, $Z_{r}^{k}$ just consists of all cycles in degree
$\leq k$ and $B_{r}^{k}$ consists of all boundaries in degree $\leq k$
(that is, $Z_{r}^{k}=Z^{k}$ and $B_{r}^{k}=B^{k}$). The quotient
$\slfrac{Z^{k}}{B^{k}}$ is not the homology of the $k^\text{th}$
filtered piece $C^{k}$, because $B^{k}$ consists of elements of
$C^{k}$ which are boundaries in $C$, not just boundaries of elements
in $C^{k}$. In fact, the quotient is\[
\slfrac{Z^{k}}{B^{k}}\cong i_{*}H_{*}(C)\]
where $i:C^{k-1}\hookrightarrow C$ denotes the inclusion of the $k^\text{th}$
filtered piece into the total complex. For all $r>N$, then, we have
\begin{eqnarray*}
E_{r}^{k} & = & \slfrac{Z^{k}}{Z^{k-1}+B^{k}}\\
 & = & \left(\slfrac{Z^{k}}{B^{k}}\right)\slash\left(\slfrac{Z^{k-1}}{B^{k-1}}\right)\\
 & = & \slfrac{i_{*}H_{*}(C^{k})}{i_{*}H_{*}(C^{k-1})}.
\end{eqnarray*}

We denote this stable page by $E_{\infty}^{k}$, and observe that it is
the associated graded group of the total homology $H_{*}(C)$ by the
filtration
\[\mathcal{G}^{k}H_{*}(C)=i_{*}H_{*}(C^{k}).\]
In particular, the total rank of the $E_{\infty}$ page is independent
of the choice of filtration: 
\[ \sum_{k}\rank E_{\infty}^{k}=\rank H_{*}(C).\]
In contrast, the time of collapse does depend on the choice of filtration,
though in a controlled way. (We doubt that the following proposition
is original, but were unable to find it in the literature.)

\begin{proposition}\label{filtrationchangeprop}
Let $(C,d)$ be a finitely generated chain complex, with two different
filtrations $\calF$ and $\calF^{\prime}$ which are close in the following
sense: for any $x\in C$, the difference in filtration degree $p'(x)-p(x)$
is either $0$ or $1$. Then the $p$-spectral sequence collapses
at most one page after the $p'$-spectral sequence does.\end{proposition}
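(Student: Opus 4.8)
The plan is to reduce the statement to a concrete containment criterion for collapse, phrased purely in terms of the approximate cycles $Z_r^k$ and early boundaries $B_r^k$, and then to compare these groups across the two filtrations. First I would record the following reformulation. Since $d_r[x]=[dx]$, and since for $x$ ranging over $Z_r^{m+r}$ the elements $dx$ range exactly over $B_r^m$, the differential $d_r\colon E_r^{m+r}\to E_r^m$ vanishes if and only if $B_r^m\subseteq Z_{r-1}^{m-1}+B_{r-1}^m$. Hence the $p$-spectral sequence collapses on page $l$ precisely when
\[ B_r^m\subseteq Z_{r-1}^{m-1}+B_{r-1}^m \qquad \text{for all } m \text{ and all } r\ge l, \]
and likewise for $\calF'$ with the primed groups $Z_r^{\prime k},B_r^{\prime k}$. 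So the goal becomes: assuming the primed containments for all $r\ge l'$, establish the unprimed containments for all $r\ge l'+1$.

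Next, from $p(x)\le p'(x)\le p(x)+1$ — equivalently $C^{\prime k}\subseteq C^k\subseteq C^{\prime k+1}$ — I would deduce the four interleaving containments $Z_r^{\prime k}\subseteq Z_r^k\subseteq Z_r^{\prime k+1}$ and $B_r^{\prime k}\subseteq B_r^k\subseteq B_r^{\prime k+1}$; each is immediate from the definitions, since an upper bound on $p'$ is automatically an upper bound on $p$, while an upper bound on $p$ gives an upper bound on $p'$ with a loss of one. I would then extract the key consequence of $p'$-collapse: iterating the primed criterion from page $s$ down to page $l'$ and absorbing the nested cycle terms (using $Z_{t-1}^{\prime k}\subseteq Z_{l'-1}^{\prime k}$ for $t\ge l'$) yields, for every $s\ge l'$ and every $n$,
\[ B_s^{\prime n}\subseteq Z_{l'-1}^{\prime n-1}+B_{l'-1}^{\prime n}. \]
In words: once the primed sequence has collapsed, any primed boundary can be realized by a primitive only $l'-1$ filtration steps up, at the cost of adding a cycle one internal degree lower.

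Finally I would verify the unprimed criterion for $r\ge l'+1$. Take $z=dy\in B_r^m$. Since $z\in B_r^{\prime m+1}$, the displayed consequence (with $n=m+1$, $s=r$) writes $z=c+b_a$ with $c\in Z_{l'-1}^{\prime m}$ and $b_a=dw_a\in B_{l'-1}^{\prime m+1}$. The remainder $c=z-b_a$ is itself a boundary with $p'(c)\le m$, so it lies in $B_{r+1}^{\prime m}$; applying the consequence a second time (with $n=m$, $s=r+1$) gives $c=c_1+b_b$ with $c_1\in Z_{l'-1}^{\prime m-1}$ and $b_b=dw_b\in B_{l'-1}^{\prime m}$. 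Now $p'(c_1)\le m-1$ forces $p(c_1)\le m-1$, so $c_1\in Z_{r-1}^{m-1}$; and $z-c_1=d(w_a+w_b)$ has primitive filtration $p(w_a+w_b)\le m+l'\le m+r-1$, while $p(z-c_1)\le\max(p(z),p(c_1))\le m$, so $z-c_1\in B_{r-1}^m$. Thus $z\in Z_{r-1}^{m-1}+B_{r-1}^m$, as required.

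The main obstacle will be the off-by-one in internal degree: translating a statement about $p$ into one about $p'$ and back costs one unit of filtration, so a single application of the collapse consequence only lands $z$ in $Z_{r-1}^m+B_{r-1}^m$ rather than the sharp $Z_{r-1}^{m-1}+B_{r-1}^m$. The extra page permitted by the proposition is precisely what lets me apply the consequence twice — first to fix the primitive (producing $b_a$), then to the leftover cycle, which genuinely sits one internal degree lower — and the closing bookkeeping $p(z-c_1)\le\max(p(z),p(c_1))\le m$ is what recovers the sharp internal degree on the boundary term.
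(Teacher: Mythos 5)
Your proof is correct, but it takes a genuinely different route from the paper's. The paper argues by contradiction with a minimality trick: assuming the $p'$-sequence has collapsed by page $r-1$, it picks a hypothetical class $[x]\in E_r^k$ with $d_r[x]\neq 0$ minimizing $p'(x)+p'(dx)$, uses primed collapse to write $dx=w+dz$ with controlled filtrations, and then produces a smaller counterexample ($\overline{x}=z$ or $\overline{x}=x-z$, according to whether $[w]=0$), contradicting minimality. You instead recast collapse as the explicit containment $B_r^m\subseteq Z_{r-1}^{m-1}+B_{r-1}^m$, stabilize the primed version of this criterion down to the fixed page $l'-1$, and then verify the unprimed criterion for all $r\ge l'+1$ by two successive applications of that stable consequence --- once to correct the primitive, once to handle the leftover cycle, which is exactly where the permitted extra page enters. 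What your approach buys: it is constructive (no contradiction, no minimal counterexample), it isolates a reusable collapse criterion, and it never invokes finite generation --- the paper's selection of a minimal counterexample is what needs that hypothesis, whereas your containment chase works for arbitrary filtered complexes. What the paper's approach buys is a more local statement: it works page by page at a single class rather than at the level of whole subgroups. Two of your justifications are terser than they should be, though both claims are true and both repairs are one-liners with objects you already have in hand: membership $c\in B_{r+1}^{\prime m}$ does not follow merely from ``$c$ is a boundary with $p'(c)\le m$'' --- you need the explicit primitive $y-w_a$, whose $p'$-degree is at most $\max(m+r+1,\,m+l')=m+r+1$; and membership $c_1\in Z_{r-1}^{m-1}$ requires $dc_1\in C^{(m-1)-(r-1)}$, which does not follow from the bound $p(c_1)\le m-1$ alone but does follow from the observation that $c_1=d(y-w_a-w_b)$ is a boundary, hence a cycle. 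With those two sentences added, the argument is complete.
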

\begin{proof}
Say that the $p'$-spectral sequence has collapsed by the $(r-1)^{st}$
page. We want to show that any class $[x]\in E_{r}^{k}$ must have
$d_{r}[x]=0\in E_{r}^{k-r}$, for then the $p$-spectral sequence will have collapsed
on page $r$.

Suppose for the sake of contradiction that there is
some $x\in Z_{r}$ such that $[x]\in E_{r}$ has nonzero differential.
Without loss of generality, we may take the chain $x$ with minimal
$p'(x)+p'(dx)$. Let $k$ be the degree $p(x)$, so $x \in Z^k_r$.
If $p(dx)<k-r$, then $dx\in Z_{r-1}^{k-r-1}$ and
$[dx]_r$ would represent $0$ in $E_{r}^{k-r}$. Since $d_r[x] = [dx]$ is
nontrivial, we must have $p(dx) = k - r$.

We now consider the $p'$-degrees of all the elements. Let $k'=p'(x)$
and $r'=p'(x)-p'(dx)$. Note that \begin{eqnarray*}
r' & = & p'(x)-p'(dx)\\
 & = & p'(x)-p(x)-(p'(dx)-p(dx))+p(x)-p(dx)\\
 & \in & \{0,1\}-\{0,1\}+r\\
 & \geq & r-1\end{eqnarray*}
Since the $p'$-spectral sequence, whose pages we will denote $E_{*}^{*}(p')$,
has collapsed by page $r-1$, it has also collapsed by page $r'$.
And by construction, $x$ represents a class in $E_{r'}^{k'}(p')$.
Post-collapse, the differential is identically zero, so $d_{r'}[x]_{p'}$
must represent zero in $E_{r'}^{k'-r'}(p')$. In terms of chains,
this means that
\[ dx=w+dz \]
for some $w\in Z^{k'-r'-1}_{r'-1}$ with with $p'(w) \leq k' - r' - 1$
and some $dz\in B^{k'-r'}_{r'-1}$ with $p'(z) \leq k' - 1$.  Since
$p$-gradings are at most one less than $p'$-gradings, $p(x) \geq k' -
1$ and $p(dx) \geq k' - r' -1$. Consequently, $p(z)\leq p(x)$ and
$p(w)\leq p(dx)$.

Since $dw = ddx - ddz = 0$, we have that $w\in Z_r^{k-r}$.
Since $dz = dx - w$, we have $p(dz) \leq \max(p(dx), p(w))$, and $z \in Z_r^k$.

We break into two cases.

\noindent \textbf{Case 1:} $[w] = 0 \in E_r^{k-r}$.

Set $\overline{x}=z$. Then $[\overline{x}]$ is a class in $E_r^k$ with 
\[ d_r[\overline{x}]=[dz]=[dz]+[w]=[dx]\neq0. \]
But $p'(\overline{x})=p'(z)<p'(x)$ and $p'(d\overline{x})=p'(dx-w)=p'(dx)$,
violating minimality.

\noindent \textbf{Case 2:} $[w] \neq 0 \in E_r^{k-r}$.

Set $\overline{x}=x-z$. Then $[\overline{x}]$ is a class in $E_r^k$ with
\[ d_r[\overline{x}]=[dx - dz]=[w] \neq 0. \]
But $p'(\overline{x})=p'(x - z) \leq p'(x)$ and $p'(d\overline{x})=p'(w)<p'(dx)$, violating minimality.
\end{proof}

\subsection{Endomorphisms of spectral sequences} \label{sseqend}

Suppose that $f$ is an endomorphism of the filtered chain complex
$C$ which shifts filtration degree by $l$,\[
p(fx)=p(x)-l\;\;\forall x\in C.\]
 Then $f$ \emph{acts} on the spectral sequence the following sense
\begin{enumerate}
\item There is an endomorphism $f_{r}$ of the $r^\text{th}$ page given by \begin{eqnarray*}
f_{r}:E_{r}^{k} & \rightarrow & E_{r}^{k-l}\\{}
[x] & \mapsto & [fx]\end{eqnarray*}
This is well-defined: since $f$ shifts $p(dx)$ by the same amount
that it shifts $p(x)$, it takes $Z_{r}^{k}$ into $Z_{r}^{k-l}$
and $B_{r}^{k}$ into $B_{r}^{k-l}$.
\item The action of $f_{r+1}$ on $E_{r+1}$ is the same as the one induced
by $f_{r}$ on the homology of $(E_{r},d_{r})$.
\item The action of $f_{\infty}$ on $E_{\infty}$ is the associated graded
action of \[
f_{*}:H_{*}(C)\rightarrow H_{*}(C)\]
with respect to the filtration $\mathcal{G}$ above. That is, if $[x]\in\mathcal{G}^{k}=i_{*}H_{*}(C^{k})$
is represented by $x\in Z^{k}$, then $fx\in Z^{k-l}$ and the image
$f_{*}[x]=[fx]$ lies in $\mathcal{G}^{k-l}.$ Moreover, $x$ also
serves as a representative of the equivalence class of $[x]_{\infty}\in E_{\infty}^{k}=\slfrac{\mathcal{G}^{k}}{\mathcal{G}^{k-1}}$
and $f_{\infty}[x]_{\infty}=[fx]_{\infty}.$
\end{enumerate}
We will later encounter a spectral sequence where we know the action
of an endomorphism $X$ on $H_{*}(C)$ and investigate the possible
associated graded actions on the $E_{\infty}$ page.

\section{The splitting number}
\label{sec:splitnum}
	The unknotting number of a knot is the minimum number of times the knot must be passed through itself to untie it. It is an intuitive measure of the complexity of a knot, though strikingly difficult to compute. We would like to suggest a similar number measuring the complexity of the linking \emph{between} the components of a link, unrelated to the knotting of the individual components.

\begin{definition}
The splitting number of a link $L$, written $\Sp(L)$, is the minimum number of times the different components of the link must be passed through one another to completely split the link. Equivalently, $\Sp(L)$ is the is the minimum over all diagrams for $L$ of the number of between-component crossings changes required to produce a completely split link.
\end{definition}

\begin{figure}[H]
  \begin{center}
    \includegraphics[scale=0.3]{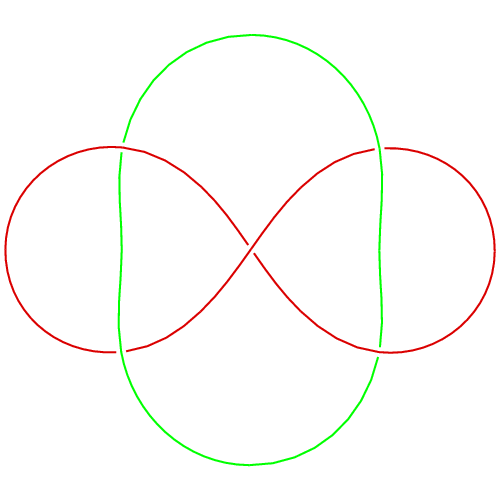}
  \end{center}
  \caption{The Whitehead link has splitting number 2.\label{Lwhitehead}}
\end{figure}

A completely split link has splitting number $0$. The Hopf link has splitting number $1$, as demonstrated by the standard diagram. In general, any diagram for a link $L$ gives an upper bound on $\Sp(L)$, as one may change crossings until the components of the link are layered one atop the next.

The Whitehead link $L_W$ has splitting number $2$---change two diagonally opposite crossings  in the standard diagram (Figure~\ref{Lwhitehead}). While changeing the crossing in the center would split the link,  that crossing is internal to one component so not allowed. To see that $\Sp(L_W)\neq 1$, note that a crossing change between components $K_c$ and $K_d$ of a link $L$ changes the linking number $\lk(K_c,K_d)$ by $\pm 1$. Since the Whitehead link has linking number $0$, an even number of crossing changes will be required. 

If $L$ is a two-component link with components $K_1$ and $K_2$, then the quantity
\[b_{lk}(L) := \begin{cases}
\left \vert \lk(K_1,K_2) \right \vert & \mbox{ if }L\mbox{ is non-split and }\lk(K_1,K_2)>0\\
2 & \mbox{ if }L\mbox{ is non-split and }\lk(K_1,K_2)=0\\
0 & \mbox{ if }L\mbox{ is split}\end{cases} \]
provides a lower bound on $\Sp(L)$. If $L$ has many components, we define
\[ b_{lk}(L) := \sum_{c<d} b_{lk}(L_{cd}),
\]
where $L_{cd}$ denotes the sublink consisting of the $c^\text{th}$ and $d^\text{th}$ components. Since splitting a link certainly requires that one change enough crossings to split each pair of components (and each crossing implicates only one two-component sublink), we conclude that
\[\Sp(L) \geq b_{lk}(L).
\]

Our spectral sequence provides less obvious lower bound for the splitting number: the splitting number plus one is at least the index of the page on which the spectral sequence collapses.

\begin{reptheorem}{SplittingNumThm}
Let $L$ be a link and let $w_c\in R$ be a set of component weights such
  that $w_c - w_d$ is invertible for each pair of components $c$ and $d$. Let $b(L,
  w)$ be largest integer $k$ such that $E_k(L, w) \neq E_\infty(L, w)$.
  Then $b(L, w)\le \Sp(L)$.

\[ \Sp(L) \geq b(L,w) \]
\end{reptheorem}

\begin{proof}
We proceed by induction on splitting number. If $L$ is a split link, then there is a diagram in which $d_1 = 0$ so the spectral sequence collapses immediately: $E_1 = E_\infty$ and $b(L) = 0$.

If $L$ is non-split, then there is a diagram $D$ in which changing exactly $k = \Sp(L)$ crossings produces a diagram for a split link. Consider the diagram $D'$ resulting by changing just one of those crossings, say $i$; the link $L'$ depicted will have splitting number $k-1$.

In the proof of Proposition~\ref{CrossingChangeProp}, we saw that the two filtered chain complexes $C(D,w)$ and $C(D',w)$ can actually be viewed as two filtrations $\calF$ and $\calF'$ on, say, $C(D,w)$, after rescaling the generators of $C(D',w)$ by units in $R$. These two filtrations by gradings $g$ and $g'$, differ in a controlled way.

Recall that for a generator $x$ of $C(D,w)$, the relevant gradings are
\[q(x) = \deg(x) +p(I) + |I| + n_{+} (D) - 2 n_{-} (D) \mbox{\;\;and\;\;} g(x) = \frac{q(x) - |L|}{2}.\]
The monomial degree $\deg(x)$ and circle count $p(I)$ are the same in both $D$ and $D'$. If $x$ sits over the $0$-resolution of the crossing $i$ in $D$, then it sits over the $1$-resolution of $i$ in $D'$, and vice versa.  So the value of $|I|$ differs by $\pm1$ between the two complexes. Finally, the difference $n_{+} (D) - 2 n_{-} (D)$ decreases (increases) by $3$ if $i$ is a positive (negative) crossing in $D$. Thus the difference in filtration degree $g'(x) - g(x)$ is in $\{-1,-2\}$ if the crossing is positive and $\{1,2\}$ if the crossing is negative.

Since a global shift in filtration degree does not affect the page at which the corresponding spectral sequences collapses, Proposition~\ref{filtrationchangeprop} applies. We conclude that the spectral sequence for $L$ collapses at most one page after the spectral sequence for $L'$, so 
\[ b(L,w) \leq b(L',w) + 1 \leq \Sp(L') + 1 = \Sp(L).\qedhere \]
\end{proof}

An interesting example is the link $L = {}^2n^{13}_{8862}$ shown in Figure~\ref{boundexfig}. The two components are a trefoil and the unknot, and they have linking number $1$. There is an obvious way to split the $L$ by changing three crossings, say, pulling the red component on top of the green one. The spectral sequence with the nontrivial $\F_2$ weighting $w$, shown in Table~\ref{boundextab}, collapses on the $E_3$ page, so $\Sp(L) \geq b(L,w) = 2$. Since $\Sp(L)$ must have the same parity as the linking number, we have that $\Sp(L) = 3$.

The calculation of the spectral sequence for ${}^2n^{13}_{8862}$ and many other links is discussed in Section~\ref{sec:ex}.

\footnotesize 
\begin{center}
\begin{longtable}{p{0.15\textwidth}@{}p{0.03\textwidth}p{0.07\textwidth}p{0.75\textwidth}}
\caption{$E_k({}^2n^{13}_{8862}, w)$ over $\F_2$ with non-trivial weight function $w$.  $E_1(L, w) = Kh(L)$
  omitted.}\label{boundextab}\\*
\toprule
Link $L$ & $E_k$ & $\rank E_k$ & $P_k(q, t) = \sum_{i,j} (\rank E_k^{ij})t^iq^j$ \\
\midrule
${}^2n^{13}_{8862}$ & $E_2$ & 20 & $t^{-2}q^{-2} + t^{-2} + q^{2} + q^{4} + t^{1}q^{2} + t^{1}q^{4} + t^{2}q^{4} + t^{2}q^{6} + t^{3}q^{6} + t^{3}q^{8} + 2t^{4}q^{8} + 2t^{4}q^{10} + t^{5}q^{10} + t^{5}q^{12} + t^{6}q^{12} + t^{6}q^{14} + t^{7}q^{14} + t^{7}q^{16}$ \\
 & $E_3$ & 12 & $t^{2}q^{4} + t^{2}q^{6} + 2t^{4}q^{8} + 2t^{4}q^{10} + t^{5}q^{10} + t^{5}q^{12} + t^{6}q^{12} + t^{6}q^{14} + t^{7}q^{14} + t^{7}q^{16}$ \\
\bottomrule
\end{longtable}
\end{center}
\normalsize

\section{Detecting unlinks}
\label{sec:unlinkdetect}
	In this section, we work over a field $\F$ of characteristic $2$. Since our construction relies on choosing different weights for different components, $\F_2$ itself is not large enough to accommodate many-component links. The specific choice of a larger field is unimportant, so we will write $\F$ for some finite field of characteristic $2$ with more elements than there are components of the link under consideration. Since $Kh(L; \F) \cong Kh(L; \F_2) \otimes \F$, the rank of Khovanov homology is independent of the choice of $\F$. For this reason, we will often write $Kh(L)$ for $Kh(L;\F)$.

Kronheimer and Mrowka have shown that Khovanov homology detects the unknot. That is, if a knot $K$ has $Kh(K)$ of rank $2$, then $K$ is the unknot.

Corollary~\ref{CorRankBounds} provides an immediate upgrade.

\begin{proposition}\label{PropUnknotComp}
Let $L$ be an $m$-component link, and suppose that the rank of $Kh(L)$ is $2^m$. Then each component of $L$ is an unknot.
\end{proposition}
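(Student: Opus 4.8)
The plan is to deduce this directly from the rank inequality in Corollary~\ref{CorRankBounds}, combined with two standard facts about the Khovanov homology of a single knot: that it always has rank at least $2$, and (Kronheimer--Mrowka) that rank exactly $2$ forces the knot to be the unknot. The whole argument is a squeeze: the rank inequality bounds a product of component ranks from above by $2^m$, while the universal lower bound bounds it from below by $2^m$, and forcing equality pins down each factor.

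First I would apply Corollary~\ref{CorRankBounds} over the field $\F$ of characteristic $2$ fixed in this section. Writing $K_1,\dots,K_m$ for the components of $L$ and using that the rank of a tensor product of $\F$-vector spaces is the product of the ranks, the corollary gives
\[ 2^m = \rank Kh(L) \;\geq\; \rank \bigotimes_{c=1}^m Kh(K_c;\F) \;=\; \prod_{c=1}^m \rank Kh(K_c;\F). \]
Next I would invoke the fact that $\rank Kh(K_c;\F) \geq 2$ for every knot $K_c$; in characteristic $2$ this follows from the Bar-Natan spectral sequence, whose $E_\infty$ page for a single knot has rank $2$, together with $\rank Kh(K_c;\F)=\rank Kh(K_c;\F_2)$. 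Thus each of the $m$ factors on the right is an integer that is at least $2$, so their product is at least $2^m$, with equality if and only if every factor equals $2$.

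Comparing the two bounds forces equality throughout, so $\rank Kh(K_c;\F) = 2$ for each $c$. Finally I would apply the Kronheimer--Mrowka unknot detection theorem componentwise: since $\rank Kh(K_c) = 2 = \rank Kh(U)$, each $K_c$ is the unknot, which is exactly the claim.

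The only genuinely hard input is the Kronheimer--Mrowka theorem, which is cited rather than reproved; everything else is the elementary arithmetic of ranks. The step to watch is the bookkeeping of the inequalities' directions—one must be sure that Corollary~\ref{CorRankBounds} bounds the product from above while the per-component bound $\rank Kh(K_c)\geq 2$ bounds it from below, since it is precisely this two-sided squeeze that collapses each factor to $2$ and licenses the appeal to unknot detection.
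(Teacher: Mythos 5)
Your proof is correct and follows essentially the same route as the paper's: apply Corollary~\ref{CorRankBounds} to bound the product $\prod_c \rank Kh(K_c)$ above by $2^m$, use the universal lower bound of $2$ per component to force each factor to equal $2$, and finish with Kronheimer--Mrowka. The only difference is that you supply a justification (the Bar-Natan spectral sequence in characteristic $2$) for the fact that every knot has rank at least $2$, which the paper simply asserts.
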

\begin{proof}
Let $K_1,\dots,K_m$ be the components of $L$. By Corollary~\ref{CorRankBounds}, we have a rank inequality 
$$\rank Kh(L) \geq \rank Kh(K_1)\times \rank Kh(K_2) \times \cdots \times \rank Kh(K_m).$$
The left-hand-side is $2^m$. Since every knot has Khovanov homology of rank at least two, the right-hand side is at least $2^m$. Hence every one of the components $K_i$ must have $\rank (Kh(K_i))=2$. By Kronheimer and Mrowka's result, each of those components is an unknot.
\end{proof}

Equality is possible: the Hopf link has rank four, just like the two-component unlink. This generates a family of such examples: iterated connect-sums and disjoint unions of Hopf links and unknots. The resulting links can be described as forests of unknots: given a (planar) forest $F$, form a link $L_F$ by placing an unknot at each vertex then clasping them together along each edge (Figure ~\ref{ForestOfUnknotsFig}). By \cite{shum}, we have $\rank Kh(L_F) = \rank Kh(U^m)$.

\begin{question}
Are forests of unknots the only $m$-component links with Khovanov
homology of rank $2^m$ over $\F_2$?
\end{question}

\begin{figure}
  \begin{center}
    \includegraphics{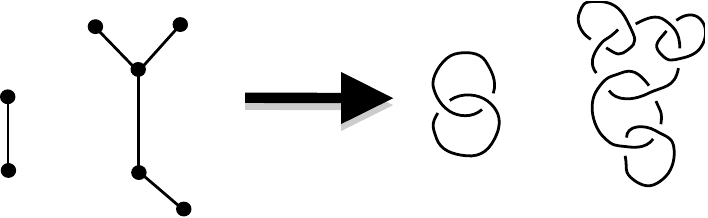}
  \end{center}
  \caption{A forest $F$ gives rise a link $L_F$ whose Khovanov homology has the same rank as that of the unlink. \label{ForestOfUnknotsFig}}
\end{figure}

None of these nontrivial links have the same bigradings at the unlink. As we will show later, this is no coincidence.

	\subsection{The Khovanov module}
		
Khovanov homology is not just an abelian group: $Kh(L)$ a module over the component algebra
\[ A_m = \F_2[X_1,\dots,X_m]/(X_1^2, X_2^2, \dotsc, X_m^2), \]
see \cite{heddenni}.  The module structure is defined by choosing
marked points $p_c$ on each component $c$ of $L$.  Then $X_c$ acts by
$X_{p_c}$.

In fact, this module structure extends to all the pages $E_k$.  The
map $X_{p_c}$ shifts the $g$ gradings by $-1$, so it preserves the
filtration $\calF^p$.  It remains to show that $X_p$ for a marked
point $p$ is a chain map with respect to the total differential $d$
and that the module structure induced on $E_k$ is independent of the
choice of marked points.

\begin{proposition}
  Let $p$ be a marked point on $D$ away from the double points.  Then
  we have that $dX_p = X_pd$.
\end{proposition}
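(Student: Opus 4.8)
The plan is to reduce the statement to the fact, already used for $d_0$, that a dot slides freely across a connected cobordism. Write $d = d_0 + d_1$. Since $X_p$ is a chain map for Khovanov's differential — recorded in Section~\ref{sec:ourconst} — we have $d_0 X_p = X_p d_0$, so it suffices to prove $d_1 X_p = X_p d_1$.

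Both differentials are $R$-linear combinations of edge cobordism maps: $d_0$ is built from the forward maps $\calA(I,J)$ and $d_1$ from the reverse maps $\calA(J,I)$, each weighted by a scalar $(-1)^{n(I,J)} s(i)(w^i_\text{over} - w^i_\text{under})$ that depends only on the edge $(I,J)$ and not on the monomial acted upon. Such scalars are central and commute past the $R$-linear map $X_p$. Writing out the component of $d_1 X_p$ and of $X_p d_1$ from $V(J)$ to $V(I)$, equality therefore reduces to the single identity $\calA(J,I)\circ X_p = X_p \circ \calA(J,I)$, where $X_p$ denotes multiplication by $x_p(J)$ on the source and by $x_p(I)$ on the target. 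The same reduction applied to $d_0$ is exactly the known relation for the forward maps.

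To verify this, recall that the cobordism for an edge is a product away from a neighborhood of the changing crossing $i$, where it is a single saddle; as $p$ lies away from the double points, the marked point sits on the product part. If the circle through $p$ is untouched by the saddle then $x_p(I) = x_p(J)$ and $X_p$ acts as the identity on that tensor factor, so the square commutes trivially. Otherwise the saddle either merges the circle through $p$ with a neighbor, in which case commutativity is the relation $x_p(J)\cdot m(a\otimes b) = m\bigl(x_p(I) a \otimes b\bigr)$ — immediate from the commutativity and associativity of $V$ — or it splits that circle, in which case one checks $\Delta(x_p(I)\, a) = \bigl(x_p(J)\otimes 1\bigr)\Delta(a)$ directly on the basis $\{1, x\}$ of $V$. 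These are precisely the relations expressing that a dot slides across the pair of pants. Since the reverse map $\calA(J,I)$ is the same surface read backwards — a saddle being its own reverse with merge and split interchanged — the computation needed for $d_1$ is word-for-word the one for $d_0$.

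The only genuine content is this local check for $m$ and $\Delta$, a short verification on the two-element basis of $V$; the remainder is bookkeeping with signs and weights. I expect no new difficulty to arise from the deformation, precisely because the reverse cobordisms comprising $d_1$ obey the same dot-sliding relations as the forward cobordisms of $d_0$. The one place demanding care is the case analysis over saddle type (merge versus split) and over whether the circle through $p$ participates in the saddle at all.
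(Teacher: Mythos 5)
Your proof is correct and follows essentially the same route as the paper: the paper likewise observes that $X_p$ commutes with the Khovanov edge maps (by the standard dot-sliding argument for $d_0$) and that $d_1$ is also a sum of edge maps, so commutation with $d = d_0 + d_1$ follows. You merely spell out the local merge/split verifications that the paper cites as standard, so there is no substantive difference.
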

\begin{proof}
  $X_p$ commutes the Khovanov edge maps; this is the standard proof that it commutes
  with $d_0$.  The deformation $d_1$ is also sum of edge maps, so the
  proposition follows.
\end{proof}

\begin{figure}
  \begin{center}
    \includegraphics{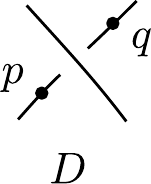}
  \end{center}
  \caption{Moving a marked point across a crossing.\label{moveptsfig}}
\end{figure}

\begin{proposition}
  Let $p$ and $q$ be marked points on either side of crossing $i$ as
  shown in Figure~\ref{moveptsfig}.  Then $X_p$ and $X_q$ are chain
  homotopic.
\end{proposition}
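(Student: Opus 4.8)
The plan is to exhibit an explicit chain homotopy built from the saddle cobordism at crossing $i$ and to evaluate $dH + Hd$ using the neck-cutting relation already recorded in the proof of Proposition~\ref{complexprop}. Since we are working over a field of characteristic $2$, producing $H$ with $dH + Hd = X_p + X_q$ is the same as showing $X_p \simeq X_q$. I would take $H = \bar f$, the \emph{reverse saddle at crossing $i$ with no weight}: on each resolution $J$ with $J(i) = 1$ let $H$ act by the cobordism map $\calA(J, J_i)$ (where $J_i$ flips the $i$-th coordinate to $0$), carrying Khovanov's sign but not the weight, and let $H$ vanish on resolutions with $0$ at crossing $i$. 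Omitting the weight is the key point: the reverse-saddle part of $d_1$ at crossing $i$ is $g = \lambda\bar f$ with scalar $\lambda = s(i)(w_{\text{over}} - w_{\text{under}})$, and $\lambda$ vanishes exactly when $i$ is a self-crossing of the component carrying $p$ and $q$; the weight-free $\bar f$ gives one homotopy that works uniformly, whether or not $i$ is a mixed crossing.

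Next I would decompose the total differential as $d = \partial + f + g$, where $f$ is the forward saddle at $i$ (the crossing-$i$ part of $d_0$), $g = \lambda\bar f$ is the crossing-$i$ part of $d_1$, and $\partial$ collects all edge maps at the remaining crossings. Because $\bar f^2 = 0$ and the cobordism at crossing $i$ commutes with every edge map at the other crossings — these are the commuting squares of Case~1 of Proposition~\ref{complexprop}, so that $\partial\bar f + \bar f\partial = 0$ over characteristic $2$ — the computation collapses:
\[ dH + Hd = (\partial\bar f + \bar f\partial) + (f+g)\bar f + \bar f(f+g) = f\bar f + \bar f f, \]
the weighted terms dropping out since $g\bar f = \bar f g = \lambda\bar f^2 = 0$.

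The final step is the neck-cutting identity from Proposition~\ref{complexprop}. Restricted to resolutions with $0$ at crossing $i$, the composite $\bar f f = \calA(J_i, I)\calA(I, J_i) = X_{p_i} + X_{q_i}$, where $p_i$ and $q_i$ sit on the two saddle arcs of the $0$-resolution; these arcs are precisely the arcs carrying $p$ and $q$, so $\bar f f = X_p + X_q$ there. The symmetric computation gives $f\bar f = X_p + X_q$ on the resolutions with $1$ at crossing $i$. Hence $dH + Hd = X_p + X_q = X_p - X_q$ as a global endomorphism, and $X_p$ is chain homotopic to $X_q$.

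The one genuinely geometric step — and the main thing to get right — is verifying that in \emph{both} the $0$- and the $1$-resolution of crossing $i$ the two local saddle arcs in the neck-cutting relation are exactly the circles through $p$ and through $q$; this is a short case check on the four arc-ends meeting at the crossing, giving $X_{p_i}+X_{q_i} = X_p + X_q$ on each summand with the \emph{same} global $p,q$. I would also note that $H$ lowers the $g$-filtration by exactly $1$, the same shift as $X_p$ and $X_q$, so the homotopy is filtered of matching degree; a routine diagram chase (of the type in Subsection~\ref{sseqend}) then promotes the chain homotopy to the equality $(X_p)_k = (X_q)_k$ on every page $E_k$, which is what makes the $A_m$-module structure on the pages independent of the marked points.
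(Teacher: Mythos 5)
Your proof is correct and uses the same chain homotopy $H$ (the unweighted reverse saddle at crossing $i$) as the paper, with the same two cancellation mechanisms: edge maps at distinct crossings commute and so cancel in pairs over characteristic $2$, while the weighted terms die because $\bar f^2 = 0$. The only real difference is that the paper simply cites Hedden--Ni for the identity $d_0H + Hd_0 = X_p + X_q$, whereas you re-derive it from the neck-cutting computation $\calA(J_i,I)\calA(I,J_i) = X_{p_i}+X_{q_i}$ already recorded in the proof of Proposition~\ref{complexprop} (together with the arc-end case check identifying $X_{p_i}+X_{q_i}$ with $X_p+X_q$), making the argument self-contained.
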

\begin{proof}
  We use the same chain homotopy $H$ as in the proof that the Khovanov
  module is well-defined \cite{heddenni}:
  \[ H = \sum_{\substack{\text{resolutions $I$} \\ I(i) = 1}} \calA(J_i, I), \]
  where $J_i$ differs from $I$ solely at $i$.  Hedden and Ni show $X_p +
  X_q = Hd_0 + d_0H$.  It remains for us to show that $Hd_1 + d_1H =
  0$.  This is an immediate consequence of the facts that $H$ and
  $d_1$ both decrease homological grading and that the reverse edge
  maps commute.
\end{proof}

Since we use $\F$ coefficients to define the complex $C(D,w)$, we will first prove results regarding the action of $A^\F_m := A_m \otimes \F$. The Khovanov module of the unknot is just a copy of $A^\F_1$ viewed as a module over itself: $Kh(U)\cong \F[X]/(X^2)$. Disjoint union of links gives tensor products of modules, over the tensor product algebra; in particular, $Kh(U^m) \cong A^\F_m$.

\begin{proposition}\label{totalhomfreerankone}
Let $L$ be a $m$-component link with $\rank Kh(L) = 2^m$. Then if $D$ is any diagram for $L$, then the total homology $H_*(C(D,w))$ is a free rank-one module over the algebra $A^\F_m$.
\end{proposition}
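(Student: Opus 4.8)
The plan is to weight the $m$ components of $L$ by distinct elements of $\F$, which is possible since $\F$ has more than $m$ elements, so that every difference $w_c - w_d$ is invertible. Theorem~\ref{SseqThm} then identifies the abutment: with distinct weights each nonempty sublink $L^{(r)}$ is a single component, so the total homology $H_*(C(D,w))$ is isomorphic to $Kh(K_1 \sqcup \cdots \sqcup K_m; \F) \cong \bigotimes_{c=1}^m Kh(K_c; \F)$. By hypothesis $\rank Kh(L) = 2^m$, so Proposition~\ref{PropUnknotComp} shows every $K_c$ is an unknot; hence $Kh(K_c; \F) \cong \F[X_c]/(X_c^2) = A^\F_1$, free of rank one over $A^\F_1$. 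Taking the tensor product over $c$ gives $\bigotimes_c A^\F_1 = A^\F_m$, free of rank one over itself. Thus at the level of $\F$-modules the statement is immediate; the content is to check that the isomorphism $H_*(C(D,w)) \cong A^\F_m$ respects the $A^\F_m$-action defined by the marked-point maps $X_{p_c}$.

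First I would verify that the isomorphism of Theorem~\ref{SseqThm} is an isomorphism of $A^\F_m$-modules. That isomorphism is assembled from the crossing-change equivalences of Proposition~\ref{CrossingChangeProp} and the Reidemeister equivalences of Proposition~\ref{ReidInvProp}, composed to reach a split diagram $D''$. The key observation is that $X_{p_c}$ multiplies a monomial of $V(I)$ by the circle through $p_c$ and so preserves each resolution summand, and, after placing the marked points away from the active region of any move, the finer splittings by divisibility used in those proofs. The crossing-change map $f$ of Proposition~\ref{CrossingChangeProp} acts on each summand $V(I)$ as multiplication by a unit scalar $\lambda_I$ depending only on the resolution, so $f X_{p_c} = X_{p_c} f$ on the nose. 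The Reidemeister equivalences are built from inclusions, projections, and blocks of $d$ via the cancellation lemma; since $X_{p_c}$ commutes with $d$ (shown already in this subsection) and preserves the summand decomposition, it commutes with those blocks and their inverses, and hence with the equivalences. Thus the composite is an $A^\F_m$-module isomorphism.

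It then remains to identify the module structure at the split end $D''$. There $d_1 = 0$, so $(C(D''), d)$ is Khovanov's complex for $K_1 \sqcup \cdots \sqcup K_m$, and the $X_{p_c}$-action is the usual Khovanov module structure, namely the tensor product $\bigotimes_c Kh(K_c)$ with $X_c$ acting on the $c$-th factor. Each factor $Kh(K_c) \cong A^\F_1$ is free of rank one, so the tensor product is free of rank one over $A^\F_m = \bigotimes_c A^\F_1$, generated by $\bigotimes_c 1_c$. Pulling this generator back through the module isomorphism of the previous paragraph exhibits $H_*(C(D,w))$ as a free rank-one $A^\F_m$-module.

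I expect the main obstacle to be the bookkeeping of the second paragraph: confirming that the marked-point maps survive the chain of equivalences, and in particular that the marked points can be held away from each Reidemeister move so that $X_{p_c}$ respects the divisibility splittings appearing in the cancellations. As a shortcut one could instead argue by Nakayama: since $A^\F_m$ is local with residue field $\F$ and $\dim_\F H_*(C(D,w)) = 2^m = \dim_\F A^\F_m$, it suffices to show the module is cyclic, i.e. that $\dim_\F H_*(C(D,w))/(X_1,\dots,X_m)H_*(C(D,w)) = 1$, whence a surjection $A^\F_m \to H_*(C(D,w))$ between spaces of equal dimension is an isomorphism. However, computing that quotient still requires identifying the action on the abutment, so the module-compatibility check cannot be avoided.
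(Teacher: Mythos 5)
Your proposal is correct and takes essentially the same approach as the paper's proof: conclude from Proposition~\ref{PropUnknotComp} that every component is an unknot, change mixed crossings to reach an unlink diagram (by Proposition~\ref{CrossingChangeProp} this only rescales generators by units, so it commutes with each $X_{p_c}$ on the nose), then use Reidemeister and marked-point moves to reach the crossingless diagram $D''$, where $H_*(C(D'',w))$ is manifestly a free rank-one $A^\F_m$-module. The only difference is one of detail: where the paper simply asserts that the $A^\F_m$-action is invariant under Reidemeister and marked-point moves, you justify this by observing that $X_{p_c}$ preserves the relevant summand decompositions and commutes with $d$, hence with the cancellation-lemma equivalences.
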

\begin{proof}
  By Proposition~\ref{PropUnknotComp}, the components of $L$ are all
  unknots.  Order the components from $1$ to $m$.  Since we are only
  interested total homology and its module structure, we can ignore
  the $g$-filtration on $C(D, w)$.  We can produce a diagram $D'$ for
  $U^m$ by swapping mixed crossings in $D$ so that at each crossing, the under-strand
  has lower index than the over-strand.  As we saw in the proof of
  Proposition~\ref{CrossingChangeProp}, $C(D, w)$ and $C(D', w)$
  differ only by rescaling generators by elements of $\F$.  The action of $X_{c_p}$
  commutes with rescaling generators. The total homology and $A_m^\F$ action are
  also invariant under Reidemeister and marked point moves.  By such moves, $D'$ can
  be transformed into $D''$, the standard diagram for $U^m$ with no
  crossings: a disjoint collection of circles with marks.  The complex
  for $D''$ has vanishing differential, and $H_*(C(D'', w))$ is
  manifestly a free rank-one $A_m^\F$-module, as desired.
\end{proof}

	\subsection{Proof of Theorem~\ref{ThmUnlinkDetection}}

Hedden and Ni have shown that the module structure of $Kh$ detects the unlink \cite{heddenni}.

\begin{theorem}[Hedden-Ni] Let $L$ be an $m$-component link. If there
is an isomorphism of $A_m$ modules
\[ Kh(L;\F_2) \cong A_m, \]
then $L$ is the unlink.
\end{theorem}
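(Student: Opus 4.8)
The plan is to follow Hedden and Ni in transporting the hypothesis across the Ozsv\'ath--Szab\'o spectral sequence \cite{ozsszo} into a rigidity statement about the double cover $\Sigma(L)$ of $S^3$ branched along $L$, and then to recover $L$ from $\Sigma(L)$. The relevant fact is that the $m$-component unlink has $\Sigma(U^m)\cong\#^{m-1}(S^1\times S^2)$, whose $\widehat{HF}$ has rank $2^{m-1}$ and carries the exterior algebra $\Lambda^*(\F_2^{m-1})$ as its natural module structure; the goal is to force $\Sigma(L)$ to be exactly this manifold.

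First I would set up the spectral sequence whose $E_2$ page is the reduced Khovanov homology of the mirror of $L$ over $\F_2$, abutting to $\widehat{HF}(\Sigma(L);\F_2)$, and check that it intertwines the basepoint maps $X_c$ generating $A_m$ with the action of the corresponding classes $\gamma_c\in H_1(\Sigma(L);\F_2)$. Granting $Kh(L;\F_2)\cong A_m$, the input is free of rank one over $A_m$, so the reduced homology has rank $2^{m-1}$; since a spectral sequence can only shrink total rank, $\widehat{HF}(\Sigma(L))$ has rank at most $2^{m-1}$. The classes $\gamma_c$ descend with a single relation, so the surviving action is that of $\Lambda^*(\F_2^{m-1})$, and its nonvanishing pins the rank from below. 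Matching these bounds forces $\mathrm{rank}\,\widehat{HF}(\Sigma(L))=2^{m-1}$ together with a free exterior action, that is, the Floer data of $\#^{m-1}(S^1\times S^2)$.

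Next I would invoke the Heegaard Floer detection of connected sums of $S^1\times S^2$: a closed oriented $3$-manifold $Y$ with torsion-free $H_1(Y)\cong\Z^{m-1}$ realizing the minimal rank $\mathrm{rank}\,\widehat{HF}(Y;\F_2)=2^{m-1}$ must be $\#^{m-1}(S^1\times S^2)$. This rests on the Thurston-norm and adjunction properties of $\widehat{HF}$ and on the detection of non-separating spheres from the nonvanishing top exterior action, and ultimately on geometrization. Applying it yields $\Sigma(L)\cong\#^{m-1}(S^1\times S^2)$.

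Finally I would recover $L$. By Proposition~\ref{PropUnknotComp}, which applies since $\mathrm{rank}\,Kh(L)=2^m$, every component of $L$ is already an unknot, so it remains only to show the components are unlinked. As $\Sigma(L)$ is reducible, the equivariant sphere theorem of Meeks--Simon--Yau produces an embedded sphere invariant under the covering involution; it descends to a sphere in $S^3$ meeting $L$ in zero or two points, hence either splits $L$ or realizes a connected sum along a single component. Since every summand of $\Sigma(L)$ is an $S^1\times S^2$, no irreducible summand survives, and an induction on $m$ forces $L$ to split completely into unknots, i.e.\ $L=U^m$. The main obstacle is the middle stretch: carrying the $A_m$-module structure faithfully through the spectral sequence so as to conclude not merely the rank equality but the full free exterior action on $\widehat{HF}(\Sigma(L))$, and then feeding this into a $3$-manifold detection theorem sharp enough to identify $\#^{m-1}(S^1\times S^2)$ on the nose; by comparison the equivariant branched-cover rigidity at the end, though delicate, is standard once the target is known.
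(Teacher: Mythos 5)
First, a point of order: the paper never proves this statement. It is the external input from Hedden--Ni, quoted verbatim from \cite{heddenni} (here and as Theorem~\ref{HeddenNiThm}) and used as a black box in the proof of Theorem~\ref{ThmUnlinkDetection}; the paper's only account of its proof is the one-sentence summary in the introduction (a compatible $A_m$-action on the Ozsv\'ath--Szab\'o spectral sequence \cite{ozsszo}, followed by Floer-theoretic detection of $S^1\times S^2$ summands of the branched double cover). So there is no internal proof to compare against, and what you have written is a reconstruction of the argument of \cite{heddenni}. In outline it does track their strategy: transport the module structure across the spectral sequence to $\widehat{HF}(\Sigma(L);\F_2)$, identify $\Sigma(L)$, then descend via equivariant spheres.

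However, the load-bearing middle step is a genuine error. The detection theorem you invoke --- torsion-free $H_1(Y)\cong\Z^{m-1}$ plus $\rank\widehat{HF}(Y;\F_2)=2^{m-1}$ (even together with a free exterior action) forces $Y\cong \#^{m-1}(S^1\times S^2)$ --- is false. Take $Y=\#^{m-1}(S^1\times S^2)\,\#\,P$ with $P$ the Poincar\'e homology sphere: then $H_1(Y)\cong\Z^{m-1}$ is torsion-free, $\rank\widehat{HF}(Y;\F_2)=2^{m-1}$ by the K\"unneth formula, and the $\Lambda^*(\F_2^{m-1})$-module structures agree, yet $Y\not\cong\#^{m-1}(S^1\times S^2)$. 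No Heegaard Floer statement can identify $\#^{m-1}(S^1\times S^2)$ ``on the nose,'' because $\widehat{HF}$ cannot distinguish $S^3$ from $P$; Thurston norm, adjunction, and geometrization do not repair this. What \cite{heddenni} actually proves is the weaker statement that $\Sigma(L)\cong \#^{m-1}(S^1\times S^2)\,\#\,Z$ with $Z$ an integer homology sphere of rank-one $\widehat{HF}$, and the equivariant sphere induction must be run from this weaker conclusion: it yields that $L$ is a \emph{completely split} union of $m$ knots, after which Kronheimer--Mrowka \cite{km} (equivalently the rank argument of Proposition~\ref{PropUnknotComp}, which you do invoke) kills the knotting of each split factor, and only then does $Z=S^3$ follow a posteriori. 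As written, your assertion that ``no irreducible summand survives'' assumes exactly the fact that Floer theory cannot supply, which is why the theorem needs the instanton-theoretic input at all. A further, smaller gap: torsion-freeness of $H_1(\Sigma(L);\Z)$ is not automatic and must itself be extracted from the module structure --- only $H_1(\Sigma(L);\F_2)\cong\F_2^{m-1}$ comes for free (for the Hopf link, $H_1(\Sigma;\Z)=\Z/2$), so the hypothesis of your detection theorem is unverified even before its falsity becomes an issue.
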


We can deduce the module structure from the bigradings.

\begin{reptheorem}{ThmUnlinkDetection} Let $L$ be an $m$-component link, and $U^m$ the $m$-component unlink.  If
\[ \rank Kh^{i,j}(L;\F_2) = \rank Kh^{i,j}(U^m;\F_2) \]
  for all $i, j$, then $L$ is the unlink.
\end{reptheorem}
\begin{proof}

The Khovanov homology of the unlink is supported entirely in homological grading $0$, where it has rank ${m \choose r}$ in quantum grading $2r-m$. Since our spectral sequence is graded by $g = (q-m)/2$, the group
$$E^{-k}_1(L,w) \cong Kh^{0,m-2k}(L)$$
has rank ${m \choose k}$ for $0\leq k \leq m$.

As described in  Section~\ref{sseqend}, there is a filtration $\calG$ on the total homology $H=H_*(C(D,w))$ with respect to which 
$$E^{-k}_\infty \cong \slfrac{\calG^{-k} H}{\calG^{-k-1} H}.$$
Since the spectral sequence collapses with $E_1 = E_\infty$, this determines the rank of each filtered piece,
$$\rank_\F \calG^{-k} H=  {m \choose k}  + {m \choose k+1}  +\cdots + {m \choose m} $$

Let $I $ be the (maximal) ideal in $A_m^\F$ generated by the $X_i$. The top nonvanishing power of the ideal is $I^m$, which is spanned by the element $X_1X_2\cdots X_m$, and we have $I^{m+1}=0$. Consider the filtration  
$$0 \subset I^m \subset I^{m-1} \subset \cdots \subset I \subset A_m^\F.$$

By Proposition~\ref{totalhomfreerankone}, the total homology is a free rank-one module over $A_m^\F$, generated by some $e\in \calG^0 H \cong H$. Moreover, since each endomorphism $X_i$ lowers the $g$-grading by $1$, it takes $\calG^{-k}$ into $\calG^{-k-1}$.Hence
$$I^{k} e \subset \mathcal{G}^{-k}H$$
for every $0\leq k \leq m$.

Since $e$ is the generator of a free $A_m^\F$-module, we know that $I^k e$ actually has the same rank as $I^k$ itself, which is the same as the rank of $\mathcal{G}^{-k}H$ computed above. Hence $I^k e = \calG^{-k}H.$

The associated graded module is
$$\bigoplus_k I^{k} e/I^{k+1} e \cong \bigoplus_k A_m^\F[k] e,$$
where $A_m^\F[k]$ denotes the linear span of the monomials of degree $k$ in the $X_i$. This isomorphic to $A_m^\F$ itself, viewed as an $A_m^\F$-module. But $E_\infty(L,w) \cong E_1 (L,w) \cong Kh(L)$. So $Kh(L)$ is a free, rank-one $A_m^\F$ module.

More precisely, $Kh(L;\F)$ is a free, rank-one $\F[X_1,\dots,X_n]/X_i^2$-module. To apply Hedden-Ni, and conclude that $L$ is the unlink, we need to show that $Kh(L;\F_2)$ is a free, rank-one $\F_2[X_1,\dots,X_n]/X_i^2$-module. In general, extending the ground field can make a free module out of a non-free one \cite{MO}. This cannot happen for $A_m$-modules, essentially because $A_m$ is a local ring.

Indeed, suppose that $M$ is a module over $A_m$ such that $M_\F = M \otimes \F$ is a free rank-one module over $A_m \otimes \F$. Let $a \in M_\F$ be a generator, so the $\F$-span of $A_m a$ is all of $M_\F$. Now pick some element $b$ of the original module  $M$ such that $b \notin I\cdot M_\F$. Then $b = \alpha(1 + X)a$ where $\alpha \in \F$ and $X \in I$. Because $I$ is nilpotent of order $m+1$, the coefficient $\alpha(1+X)$ is a unit with inverse $\alpha^{-1}(1 - X + X^2 + \cdots \pm X^m)$. Thus $b$ is also a generator for $M_\F$ as a free $A_m\otimes \F$-module. In particular, $b$ is not annihilated by any element of $A_m$. This means that 
$$\rank_{\F_2} A_m b = 2^m = \rank_\F M_\F = \rank_{\F_2} M.$$
Hence $A_m b$ is all of $M$, and $M$ is a free $A_m$-module.
\end{proof}

\noindent {\bf Example.} It is instructive to see where this argument breaks down for the Hopf link. There, $Kh(L) = E_\infty$ has total rank four, with rank-one summands in $g$-degrees $-1,0,1,2$. Thus the filtration of the rank-1 free $A_n$-module  $H^*(C(D,w))$ has ranks
\[ 1 < 2 < 3 < 4. \]

Let $e$ be a generator, and write $x_i\dots x_j$ for $X_i \dots X_j e$. The filtration is 
\[ \langle x_1x_2 \rangle \subset  \langle x_1x_2, x_1 + x_2\rangle \subset  \langle x_1x_2, x_1, x_2 \rangle \subset  \langle x_1x_2, x_1, x_2, 1\rangle \]
The associated graded has a nonstandard module structure: 
\[ \langle a,b \vert X_1a=X_2a, X_1b=X_2b \rangle. \]
In contrast, the two component unlink has a filtration of ranks $1 < 3 < 4$, and the associated graded is isomorphic $A_2$ itself.

\section{Connections with symplectic topology}
\label{sec:symp}
	There may be a deformation of the differential in symplectic Khovanov homology similar to our $d_1$,
as suggested to the first author by P. Seidel. In \cite{seidelsmith}, Seidel
and Smith construct a symplectic fibration over
$\mbox{Conf}^{2n}(\C)$; the monodromy along a braid $\beta\in\pi_{1}(\mbox{Conf}^{2n}(\C))$
based at $t_{0}$ yields a symplectic automorphism $\phi$ of the fiber $(\mathcal{Y}_{n,t_{0}}, \Omega_0).$
A crossingless matching specifies both a Lagrangian $\mathcal{L}\subset\mathcal{Y}_{n,t_{0}}$
and a way to close the braid into a link $\widehat{\beta}$. Symplectic
Khovanov homology $Kh_{\text{symp}}(\widehat{\beta})$ is then defined to
be the Lagrangian Floer homology $HF(\mathcal{L},\phi(\mathcal{L}))$.
The conjectured deformation is given by perturbing the symplectic
form away from exactness in such a way that the monodromy
given by braiding strands marked with different weights is symplectically
isotopic to the identity with respect to the perturbed form $\Omega_{s}$.
The best-understood example of such a phenomenon is the square of a generalized
Dehn twist $\tau$ in dimension four. If $(M^4,\w_0)$ contains a Lagrangian sphere $L$, then there are families of self-diffeomorphisms $\phi^{s}$ and symplectic forms $\w_{s}$ for $M$ such
that
\begin{itemize}
\item $\phi^{s}$ is a $\w_{s}$-symplectomorphism

\item $\phi^{0}=\tau_{L}^{2}$

\item $\phi^{s}$ is $\w^{s}$-symplectically isotopic to the identity
for $s\neq0$,
\end{itemize}
as described in \cite{seidel1}. In this construction, $\int_{L}\w_{s}=s$ is nonzero
when $s$ is nonzero; the sphere $L$ is no longer Lagrangian for the perturbed forms. To get from dimension
$4$ to dimension $4n$, we recall Manolescu's observation \cite{manolescu}
that $\mathcal{Y}_{n,t_{0}}$ is an open subscheme of the Hilbert
scheme $\mbox{Hilb}^{n}(S_{n,t_{0}})$, where $S_{n,t}$ is a family
of symplectic four-manifolds over $\mbox{Conf}^{2n}(\C)$. (In fact, the $S_{n,t}$ are Milnor
fibers of the $A_{2n-1}$-singularity, studied in detail in \cite{khseidel}). The cohomology
of $S_{n,t_{0}}$ is generated by Lefschetz thimbles $\Delta_{1},\dots,\Delta_{2n}$,
one for each strand in the braid, and it is possible that a perturbation
like \[
\w_{s}=\w+s(w_{1}\Delta_{1}+\cdots+w_{2n}\Delta_{2n})\]
would have the desired property. Indeed, the monodromy of a braid
group generator $\sigma_{i}$ is a Dehn twist about a sphere $L_{i}$,
and $\int_{L_{i}}\w_{s}=s(w_{i}-w_{j})$. Perhaps a similar perturbation
could be found for $\mathcal{Y}_{n,t}$.

Such a symplectic analog would share many features of our deformation of Khovanov homology.
Firstly, our perturbation $d_{1}$ respects the grading $\ell=h-q$,
which is the single grading on symplectic Khovanov homology. (This
also suggests a reason that a second grading on $Kh_{\text{symp}}$ has been hard to find:
there may be holomorphic disks between generators of different $q$-gradings,
coming in cancelling pairs which pick up different areas under
$\w_{s}$.) Secondly, the relationship between Khovanov's $d_{0}$
and our perturbation $d=d_{0}+d_{1}$ parallels the relationship between
the Fukaya category of an exact symplectic manifold and the curved
Fukaya category of a nonexact symplectic manifold. Since $d_{0}^{2}=0$
for local reasons, Khovanov was able to define a theory for tangles \cite{khovanov2}:
there is a ring $H^{n}$, and a $(2n,2m)$-tangle yields a complex
of $(H^{n},H^{m})$ bimodules. However $d^{2}=0$ for nonlocal reasons,
and a $(2n,2m)$-tangle yields a curved complex of
$(H^{n},H^{m})$-bimodules with curvature $d^2=w \in Z(H^{n}\otimes (H^{m})^{op})$. ($Z$ denotes the center.) We will construct the deformed theory for tangles in the forthcoming paper \cite{forgetfultang}.

\section{Sample computations}
\label{sec:ex}
	The combinatorial definition of the spectral sequence makes it amenable to computer calculation. We use {\tt knotkit}, a C++ software package for knot
homology computations written by the second author, to compute the spectral sequence for thousands of links \cite{knotkit}.

These computations show that the spectral sequence is not determined by the Khovanov homology of the links involved. The links ${}^2n^{12}_{1705}$ and ${}^2n^{14}_{65798}$ have the same Khovanov homology, and each as two unknot components (see Figure~\ref{strongerex2fig}).  Yet the spectral sequences collapse on different pages ($E_2$ vs $E_3$).

\begin{figure}[H]
  \begin{center}
    \includegraphics[scale=0.3]{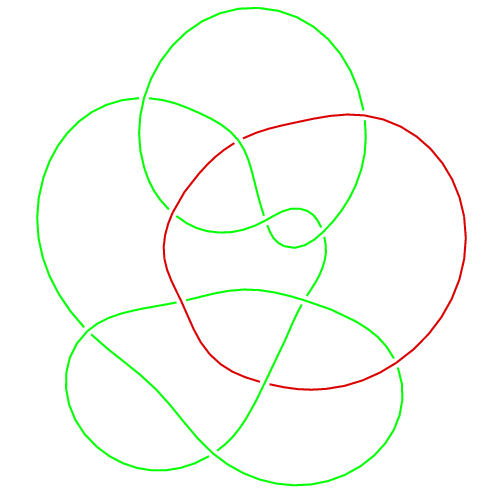}
    \includegraphics[scale=0.3]{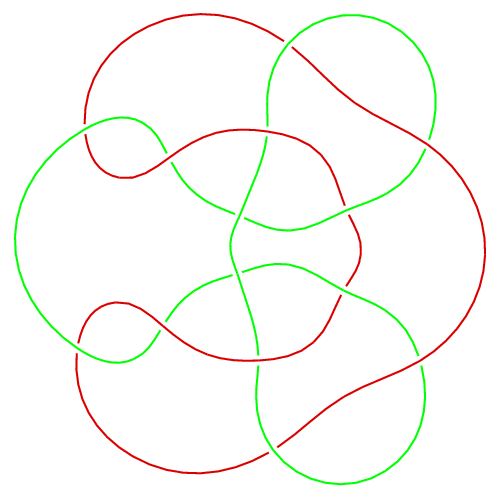}
  \end{center}
  \caption{Links ${}^2n^{12}_{1705}$ (left) and ${}^2n^{14}_{65798}$
    (right). \label{strongerex2fig}}
\end{figure}

\footnotesize 
\begin{center}
\begin{longtable}{p{0.15\textwidth}@{}p{0.03\textwidth}p{0.07\textwidth}p{0.75\textwidth}}
\caption{The link splitting spectral sequence $E_k(L)$ over $R =
  \Z_2(x_1, \dotsc, x_m)$ with weight function $w_c = x_c$ for
  examples in this section.  $E_1(L) = Kh(L)$
  omitted.}\label{strongerexsstab}\\*
\toprule
Link $L$ & $E_k$ & $\rank E_k$ & $P_k(q, t) = \sum_{i,j} (\rank E_k^{ij})t^iq^j$ \\
\midrule
${}^2n^{12}_{1705}$ & $E_2$ & 4 & $q^{-2} + 2 + q^{2}$ \\
${}^2n^{14}_{65798}$ & $E_2$ & 68 & $t^{-5}q^{-10} + t^{-5}q^{-8} + 2t^{-4}q^{-8} + 2t^{-4}q^{-6} + 2t^{-3}q^{-6} + 2t^{-3}q^{-4} + t^{-2}q^{-6} + 4t^{-2}q^{-4} + 3t^{-2}q^{-2} + 2t^{-1}q^{-4} + 5t^{-1}q^{-2} + 3t^{-1} + 3q^{-2} + 6 + 3q^{2} + 3t^{1} + 5t^{1}q^{2} + 2t^{1}q^{4} + 3t^{2}q^{2} + 4t^{2}q^{4} + t^{2}q^{6} + 2t^{3}q^{4} + 2t^{3}q^{6} + 2t^{4}q^{6} + 2t^{4}q^{8} + t^{5}q^{8} + t^{5}q^{10}$ \\
 & $E_3$ & 4 & $q^{-2} + 2 + q^{2}$ \\
\bottomrule
\end{longtable}
\end{center}
\normalsize

We have computed splitting number bounds for all links with 12 or fewer
crossings in the Morwen hyperbolic link tables from
SnapPy \cite{snappy}. Some choices and approximations must be made, which we 
describe before giving the results.

We use two coefficient rings, $\PP = \Z/2(x)$ and $\Q$. For the former, we weight
component $c$ by $w_c = x^c$, and for the latter, we weight component $c$ by the
integer $c$ itself.

Since {\tt knotkit} is not currently able to detect split links, we need
an approximation to the bound coming from the linking number, $b_\text{lk}$.
The link table contains only non-split links, there is no problem for
two-component links. But non-split links with more than two components, such
as the Borromean rings, may have split sublinks. We define $b'_{lk}$ as follows:  If $L$ has two components and is known to
be non-split, we set $b'_{lk}(L) = b_\text{lk}(L)$.  If $L
= K_1\cup K_2$ may be split, then we define
\[ b'_{lk}(L) = \begin{cases}
   |lk(K_1, K_2)| & lk(K_1, K_2)\neq 0 \\
   2 & Kh(L; \Z/2) \not\cong Kh(K_1\amalg K_2; \Z/2) \\
   0 & \text{otherwise} \\
 \end{cases}.
\]
If $L$ has more than two components and is non-split, we define
\[ b'_{lk}(L) = \max\Bigl(\sum_{i < j} b'_{lk}(L_{ij}), 2\Bigr), \]
where $L_{ij}$ is the sublink of $L$ consisting of the $i^\text{th}$
and $j^\text{th}$ components.

Any diagram $D$ for a link $L$ gives an upper bound on the splitting number. 
Number the components of $L$ from $1$ to $m$.
Let $\sigma\in S_m$ be a permutation of the components.  We can
produce a diagram $D'$ for a split link by swapping the
$u(D, \sigma)$ crossings of $D$ where $\sigma(c_\text{upper})
< \sigma(c_\text{lower})$.  Let $u(D)$ be the minimum of
$u(D, \sigma)$ over all $\sigma$, so  $u(D)$ is a upper bound for
$\Sp(L)$.

We computed
\[ b'_{lk}(L), b^\Q(L, w^\Q), b^\PP(L, w^\PP), u(D), \]
for all 5698 links in the Morwen link table with 12 or fewer crossings, where $D$ is the tabulated minimal diagram.  Of those
links, 4770 (83.7\%) have non-trivial lower bounds for $\Sp(L)$ and the lower bound
is known to be tight for 3587 (63\%) links.  Our upper bound is very rough, so the
lower bound is likely to be tight in many more cases.  The bound coming
from the spectral sequence is stronger than the linking number bound for 17 of those
links and equal to it for 2421.  The examples with $b >
b'_{lk}$ are shown Table~\ref{bstrongerextab}.  For those 17 examples,
we verified by hand that $b'_{lk} = b_{lk}$.

We have also tabulated in Table~\ref{spboundcomptab} in the appendix
all links with 10 or fewer crossings for which any of $b'_{lk}, b^\Q$
or $b^\PP$ give a nontrivial lower bound on $\Sp(L)$, .

\footnotesize 
\begin{center}
\begin{longtable}{p{0.15\textwidth}p{0.10\textwidth}p{0.10\textwidth}p{0.10\textwidth}}
\caption{Knots with 12 of fewer crossings for which $b'_{lk}(L) < b(L)$.  $u(D)$ gives the upper bound on $\Sp(L)$.}
\label{bstrongerextab}\\*
\toprule
Link $L$ & $b_\text{lk}(L)$ & $b(L)$ & $\Sp(L)$ \\
\midrule
 ${}^{2}n^{12}_{5538}$  & 1 & 3 & 3 \\
 ${}^{2}n^{12}_{5546}$  & 1 & 3 & 3 \\
 ${}^{2}n^{12}_{5553}$  & 1 & 3 & 3 \\
 ${}^{2}n^{12}_{5559}$  & 1 & 3 & 3 \\
 ${}^{2}n^{12}_{5563}$  & 1 & 3 & 3 \\
 ${}^{2}n^{12}_{5570}$  & 1 & 3 & 3 \\
 ${}^{2}n^{12}_{5600}$  & 1 & 3 & 3 \\
 ${}^{2}a^{11}_{739}$  & 1 & 3 & $3$--$5$ \\
 ${}^{2}a^{12}_{2521}$  & 1 & 3 & $3$--$5$ \\
 ${}^{2}a^{12}_{2552}$  & 1 & 3 & $3$--$5$ \\
 ${}^{2}a^{12}_{2672}$  & 1 & 3 & $3$--$5$ \\
 ${}^{2}n^{12}_{5515}$  & 1 & 3 & $3$--$5$ \\
 ${}^{2}n^{12}_{5516}$  & 1 & 3 & $3$--$5$ \\
 ${}^{2}n^{12}_{5517}$  & 1 & 3 & $3$--$5$ \\
 ${}^{2}n^{12}_{5519}$  & 1 & 3 & $3$--$5$ \\
 ${}^{2}n^{12}_{5522}$  & 1 & 3 & $3$--$5$ \\
 ${}^{3}a^{12}_{2910}$  & 1 & 3 & $3$--$5$ \\
\bottomrule
\end{longtable}
\end{center}
\normalsize

\appendix
\section{Index of notation}
	
\begin{center}
\begin{longtable}{ll}
$R$ & a ring \\
$L$ & a link \\
${}^m[an]^n_k$ & the $k^\text{th}$ $n$-crossing (non-)alternating link in the Morwen \\
  & hyperbolic link table \\
$m = |L|$ & number of components of $L$ \\
$c, d, \dotsc$ & index of components \\
$K_c$ ($1\le c\le m)$ & components of $L$ \\
$A_m$ & the component algebra \\
$\calA$ & Khovanov's $(1 + 1)$-dimensional TQFT \\
$\calA(I, J)$ & Khovanov edge map \\
$\calA(J, I)$ & reverse edge map \\
$Kh(L)$ & Khovanov homology \\
$w$ & component weight function \\
$w_c$ & weight of component $c$ \\
$E_k = E_k(L, w)$ & the link splitting spectral sequence \\
$D$ & a diagram for $L$ \\
$n$ & number of crossings of $D$ \\
$n_+(D)$ & number of positive crossings in $D$ \\
$n_-(D)$ & number of negative crossings in $D$ \\
$i, j, \dotsc$ & general index or index of crossing \\
$I, J, \dotsc$ & (complete) resolution of $D$ \\
$(I, J)$ & edge of the hypercube of resolutions \\
$d_0$ & the Khovanov differential \\
$d_1$ & the deformation of $d_0$ defining $E_k$ \\
$d = d_0 + d_1$ & the total differential for $E_k$ \\
$C = C(D, w)$ & the filtered chain complex which induces $E_k$ \\
$q(x)$ & quantum grading on $C$ \\
$h(x)$ & homological grading on $C$ \\
$\ell(x) = h(x) - q(x)$ & $\ell$ grading \\
$g(x) = (q(x) - m)/2$ & $g$ grading \\
$\mathcal{F}^p$ & filtration on $C$ induced by $g$ inducing $E_k$ \\
$w^i_\text{over}$, $w^i_\text{under}$ & weight of over-, under-strand, respectively, at crossing $i$ \\
$s$ & sign assignment \\
$X$ & CW decomposition of $S^2$ induced by $D$ \\
$h(e, i)$ & height indicator function on endpoints of $1$-cells in $X$ \\
\end{longtable}
\end{center}

\section{Computations}
	\tiny 
\begin{center}
\begin{longtable}{p{0.10\textwidth}@{}p{0.03\textwidth}p{0.07\textwidth}p{0.80\textwidth}}
\caption{Computation of the link splitting spectral sequence for links
  with 8 or fewer crossings from the MT hyperbolic link table with
  non-trivial spectral sequence over $R = \Z_2(x_1, \dotsc, x_m)$ with
  weight function $w_c = x_c$.}
\label{sscomptab}\\*
\toprule
Link $L$ & $E_k$ & $\rank E_k$ & $P_k(q, t) = \sum_{i,j} (\rank E_k^{ij})t^iq^j$ \\
\midrule
${}^2a^{5}_{3}$ & $E_1$ & 16 & $t^{-2}q^{-4} + t^{-2}q^{-2} + t^{-1}q^{-2} + t^{-1} + 2 + 2q^{2} + t^{1}q^{2} + t^{1}q^{4} + 2t^{2}q^{4} + 2t^{2}q^{6} + t^{3}q^{6} + t^{3}q^{8}$ \\
 & $E_2$ & 4 & $t^{-2}q^{-4} + t^{-2}q^{-2} + 1 + q^{2}$ \\
${}^2a^{6}_{4}$ & $E_1$ & 24 & $t^{-4}q^{-10} + t^{-4}q^{-8} + t^{-3}q^{-8} + t^{-3}q^{-6} + 3t^{-2}q^{-6} + 3t^{-2}q^{-4} + 2t^{-1}q^{-4} + 2t^{-1}q^{-2} + 2q^{-2} + 2 + 2t^{1} + 2t^{1}q^{2} + t^{2}q^{2} + t^{2}q^{4}$ \\
 & $E_2$ & 4 & $t^{-4}q^{-10} + t^{-4}q^{-8} + t^{-2}q^{-6} + t^{-2}q^{-4}$ \\
${}^2a^{6}_{5}$ & $E_1$ & 20 & $t^{-6}q^{-16} + t^{-6}q^{-14} + t^{-5}q^{-14} + t^{-5}q^{-12} + 2t^{-4}q^{-12} + 2t^{-4}q^{-10} + 2t^{-3}q^{-10} + 2t^{-3}q^{-8} + 2t^{-2}q^{-8} + 2t^{-2}q^{-6} + t^{-1}q^{-6} + t^{-1}q^{-4} + q^{-4} + q^{-2}$ \\
 & $E_2$ & 4 & $t^{-4}q^{-12} + t^{-4}q^{-10} + t^{-2}q^{-8} + t^{-2}q^{-6}$ \\
${}^2a^{6}_{6}$ & $E_1$ & 12 & $t^{-6}q^{-18} + t^{-6}q^{-16} + t^{-5}q^{-16} + t^{-5}q^{-14} + t^{-4}q^{-14} + t^{-4}q^{-12} + t^{-3}q^{-12} + t^{-3}q^{-10} + t^{-2}q^{-10} + t^{-2}q^{-8} + q^{-6} + q^{-4}$ \\
 & $E_2$ & 4 & $t^{-2}q^{-10} + t^{-2}q^{-8} + q^{-6} + q^{-4}$ \\
${}^3a^{6}_{7}$ & $E_1$ & 32 & $t^{-3}q^{-7} + t^{-3}q^{-5} + 3t^{-2}q^{-5} + 3t^{-2}q^{-3} + 2t^{-1}q^{-3} + 2t^{-1}q^{-1} + 4q^{-1} + 4q^{1} + 2t^{1}q^{1} + 2t^{1}q^{3} + 3t^{2}q^{3} + 3t^{2}q^{5} + t^{3}q^{5} + t^{3}q^{7}$ \\
 & $E_2$ & 8 & $t^{-2}q^{-5} + t^{-2}q^{-3} + 2q^{-1} + 2q^{1} + t^{2}q^{3} + t^{2}q^{5}$ \\
${}^3a^{6}_{8}$ & $E_1$ & 24 & $t^{-6}q^{-15} + t^{-6}q^{-13} + t^{-5}q^{-13} + t^{-5}q^{-11} + 3t^{-4}q^{-11} + 3t^{-4}q^{-9} + t^{-3}q^{-9} + t^{-3}q^{-7} + 3t^{-2}q^{-7} + 3t^{-2}q^{-5} + 2t^{-1}q^{-5} + 2t^{-1}q^{-3} + q^{-3} + q^{-1}$ \\
 & $E_2$ & 8 & $t^{-6}q^{-15} + t^{-6}q^{-13} + 2t^{-4}q^{-11} + 2t^{-4}q^{-9} + t^{-2}q^{-7} + t^{-2}q^{-5}$ \\
${}^3n^{6}_{1}$ & $E_1$ & 12 & $2q^{-1} + 3q^{1} + q^{3} + t^{1}q^{1} + t^{1}q^{3} + t^{2}q^{3} + t^{2}q^{5} + t^{4}q^{7} + t^{4}q^{9}$ \\
 & $E_2$ & 8 & $q^{-1} + 2q^{1} + q^{3} + t^{2}q^{3} + t^{2}q^{5} + t^{4}q^{7} + t^{4}q^{9}$ \\
${}^2a^{7}_{8}$ & $E_1$ & 48 & $t^{-3}q^{-6} + t^{-3}q^{-4} + 3t^{-2}q^{-4} + 3t^{-2}q^{-2} + 3t^{-1}q^{-2} + 3t^{-1} + 5 + 5q^{2} + 4t^{1}q^{2} + 4t^{1}q^{4} + 4t^{2}q^{4} + 4t^{2}q^{6} + 3t^{3}q^{6} + 3t^{3}q^{8} + t^{4}q^{8} + t^{4}q^{10}$ \\
 & $E_2$ & 4 & $t^{-2}q^{-4} + t^{-2}q^{-2} + 1 + q^{2}$ \\
${}^2a^{7}_{9}$ & $E_1$ & 40 & $t^{-7}q^{-18} + t^{-7}q^{-16} + 2t^{-6}q^{-16} + 2t^{-6}q^{-14} + 3t^{-5}q^{-14} + 3t^{-5}q^{-12} + 4t^{-4}q^{-12} + 4t^{-4}q^{-10} + 3t^{-3}q^{-10} + 3t^{-3}q^{-8} + 4t^{-2}q^{-8} + 4t^{-2}q^{-6} + 2t^{-1}q^{-6} + 2t^{-1}q^{-4} + q^{-4} + q^{-2}$ \\
 & $E_2$ & 12 & $t^{-7}q^{-18} + t^{-7}q^{-16} + t^{-6}q^{-16} + t^{-6}q^{-14} + t^{-5}q^{-14} + t^{-5}q^{-12} + 2t^{-4}q^{-12} + 2t^{-4}q^{-10} + t^{-2}q^{-8} + t^{-2}q^{-6}$ \\
${}^2a^{7}_{10}$ & $E_1$ & 32 & $t^{-2}q^{-2} + t^{-2} + t^{-1} + t^{-1}q^{2} + 3q^{2} + 3q^{4} + 2t^{1}q^{4} + 2t^{1}q^{6} + 3t^{2}q^{6} + 3t^{2}q^{8} + 3t^{3}q^{8} + 3t^{3}q^{10} + 2t^{4}q^{10} + 2t^{4}q^{12} + t^{5}q^{12} + t^{5}q^{14}$ \\
 & $E_2$ & 12 & $t^{-2}q^{-2} + t^{-2} + 2q^{2} + 2q^{4} + t^{1}q^{4} + t^{1}q^{6} + t^{2}q^{6} + t^{2}q^{8} + t^{3}q^{8} + t^{3}q^{10}$ \\
${}^2a^{7}_{11}$ & $E_1$ & 32 & $t^{-2}q^{-4} + t^{-2}q^{-2} + t^{-1}q^{-2} + t^{-1} + 3 + 3q^{2} + 3t^{1}q^{2} + 3t^{1}q^{4} + 3t^{2}q^{4} + 3t^{2}q^{6} + 2t^{3}q^{6} + 2t^{3}q^{8} + 2t^{4}q^{8} + 2t^{4}q^{10} + t^{5}q^{10} + t^{5}q^{12}$ \\
 & $E_2$ & 4 & $t^{-2}q^{-4} + t^{-2}q^{-2} + 1 + q^{2}$ \\
${}^2a^{7}_{12}$ & $E_1$ & 36 & $t^{-5}q^{-12} + t^{-5}q^{-10} + 2t^{-4}q^{-10} + 2t^{-4}q^{-8} + 2t^{-3}q^{-8} + 2t^{-3}q^{-6} + 4t^{-2}q^{-6} + 4t^{-2}q^{-4} + 3t^{-1}q^{-4} + 3t^{-1}q^{-2} + 3q^{-2} + 3 + 2t^{1} + 2t^{1}q^{2} + t^{2}q^{2} + t^{2}q^{4}$ \\
 & $E_2$ & 4 & $t^{-2}q^{-6} + t^{-2}q^{-4} + q^{-2} + 1$ \\
${}^2a^{7}_{13}$ & $E_1$ & 28 & $t^{-2}q^{-2} + t^{-2} + t^{-1} + t^{-1}q^{2} + 2q^{2} + 2q^{4} + 2t^{1}q^{4} + 2t^{1}q^{6} + 3t^{2}q^{6} + 3t^{2}q^{8} + 2t^{3}q^{8} + 2t^{3}q^{10} + 2t^{4}q^{10} + 2t^{4}q^{12} + t^{5}q^{12} + t^{5}q^{14}$ \\
 & $E_2$ & 4 & $t^{-2}q^{-2} + t^{-2} + q^{2} + q^{4}$ \\
${}^3a^{7}_{14}$ & $E_1$ & 40 & $t^{-4}q^{-9} + t^{-4}q^{-7} + t^{-3}q^{-7} + t^{-3}q^{-5} + 4t^{-2}q^{-5} + 4t^{-2}q^{-3} + 3t^{-1}q^{-3} + 3t^{-1}q^{-1} + 4q^{-1} + 4q^{1} + 3t^{1}q^{1} + 3t^{1}q^{3} + 3t^{2}q^{3} + 3t^{2}q^{5} + t^{3}q^{5} + t^{3}q^{7}$ \\
 & $E_2$ & 8 & $t^{-4}q^{-9} + t^{-4}q^{-7} + 2t^{-2}q^{-5} + 2t^{-2}q^{-3} + q^{-1} + q^{1}$ \\
${}^2n^{7}_{2}$ & $E_1$ & 16 & $t^{-5}q^{-12} + t^{-5}q^{-10} + t^{-4}q^{-10} + t^{-4}q^{-8} + t^{-3}q^{-8} + t^{-3}q^{-6} + 2t^{-2}q^{-6} + 2t^{-2}q^{-4} + t^{-1}q^{-4} + t^{-1}q^{-2} + 2q^{-2} + 2$ \\
 & $E_2$ & 12 & $t^{-5}q^{-12} + t^{-5}q^{-10} + t^{-4}q^{-10} + t^{-4}q^{-8} + t^{-3}q^{-8} + t^{-3}q^{-6} + 2t^{-2}q^{-6} + 2t^{-2}q^{-4} + q^{-2} + 1$ \\
${}^2a^{8}_{19}$ & $E_1$ & 80 & $t^{-5}q^{-12} + t^{-5}q^{-10} + 3t^{-4}q^{-10} + 3t^{-4}q^{-8} + 5t^{-3}q^{-8} + 5t^{-3}q^{-6} + 7t^{-2}q^{-6} + 7t^{-2}q^{-4} + 7t^{-1}q^{-4} + 7t^{-1}q^{-2} + 7q^{-2} + 7 + 5t^{1} + 5t^{1}q^{2} + 4t^{2}q^{2} + 4t^{2}q^{4} + t^{3}q^{4} + t^{3}q^{6}$ \\
 & $E_2$ & 4 & $q^{-2} + 1 + t^{2}q^{2} + t^{2}q^{4}$ \\
${}^2a^{8}_{20}$ & $E_1$ & 64 & $t^{-4}q^{-8} + t^{-4}q^{-6} + 2t^{-3}q^{-6} + 2t^{-3}q^{-4} + 4t^{-2}q^{-4} + 4t^{-2}q^{-2} + 5t^{-1}q^{-2} + 5t^{-1} + 6 + 6q^{2} + 5t^{1}q^{2} + 5t^{1}q^{4} + 5t^{2}q^{4} + 5t^{2}q^{6} + 3t^{3}q^{6} + 3t^{3}q^{8} + t^{4}q^{8} + t^{4}q^{10}$ \\
 & $E_2$ & 20 & $t^{-4}q^{-8} + t^{-4}q^{-6} + t^{-3}q^{-6} + t^{-3}q^{-4} + 2t^{-2}q^{-4} + 2t^{-2}q^{-2} + 2t^{-1}q^{-2} + 2t^{-1} + 2 + 2q^{2} + t^{1}q^{2} + t^{1}q^{4} + t^{2}q^{4} + t^{2}q^{6}$ \\
${}^2a^{8}_{21}$ & $E_1$ & 56 & $t^{-6}q^{-14} + t^{-6}q^{-12} + 2t^{-5}q^{-12} + 2t^{-5}q^{-10} + 4t^{-4}q^{-10} + 4t^{-4}q^{-8} + 4t^{-3}q^{-8} + 4t^{-3}q^{-6} + 5t^{-2}q^{-6} + 5t^{-2}q^{-4} + 5t^{-1}q^{-4} + 5t^{-1}q^{-2} + 4q^{-2} + 4 + 2t^{1} + 2t^{1}q^{2} + t^{2}q^{2} + t^{2}q^{4}$ \\
 & $E_2$ & 20 & $t^{-6}q^{-14} + t^{-6}q^{-12} + t^{-5}q^{-12} + t^{-5}q^{-10} + 2t^{-4}q^{-10} + 2t^{-4}q^{-8} + 2t^{-3}q^{-8} + 2t^{-3}q^{-6} + 2t^{-2}q^{-6} + 2t^{-2}q^{-4} + t^{-1}q^{-4} + t^{-1}q^{-2} + q^{-2} + 1$ \\
${}^2a^{8}_{22}$ & $E_1$ & 64 & $t^{-5}q^{-12} + t^{-5}q^{-10} + 2t^{-4}q^{-10} + 2t^{-4}q^{-8} + 4t^{-3}q^{-8} + 4t^{-3}q^{-6} + 6t^{-2}q^{-6} + 6t^{-2}q^{-4} + 5t^{-1}q^{-4} + 5t^{-1}q^{-2} + 6q^{-2} + 6 + 4t^{1} + 4t^{1}q^{2} + 3t^{2}q^{2} + 3t^{2}q^{4} + t^{3}q^{4} + t^{3}q^{6}$ \\
 & $E_2$ & 12 & $t^{-5}q^{-12} + t^{-5}q^{-10} + t^{-4}q^{-10} + t^{-4}q^{-8} + t^{-3}q^{-8} + t^{-3}q^{-6} + 2t^{-2}q^{-6} + 2t^{-2}q^{-4} + q^{-2} + 1$ \\
${}^2a^{8}_{23}$ & $E_1$ & 56 & $t^{-4}q^{-8} + t^{-4}q^{-6} + t^{-3}q^{-6} + t^{-3}q^{-4} + 4t^{-2}q^{-4} + 4t^{-2}q^{-2} + 4t^{-1}q^{-2} + 4t^{-1} + 5 + 5q^{2} + 5t^{1}q^{2} + 5t^{1}q^{4} + 4t^{2}q^{4} + 4t^{2}q^{6} + 3t^{3}q^{6} + 3t^{3}q^{8} + t^{4}q^{8} + t^{4}q^{10}$ \\
 & $E_2$ & 12 & $t^{-4}q^{-8} + t^{-4}q^{-6} + 2t^{-2}q^{-4} + 2t^{-2}q^{-2} + t^{-1}q^{-2} + t^{-1} + 1 + q^{2} + t^{1}q^{2} + t^{1}q^{4}$ \\
${}^2a^{8}_{24}$ & $E_1$ & 40 & $t^{-4}q^{-10} + t^{-4}q^{-8} + t^{-3}q^{-8} + t^{-3}q^{-6} + 3t^{-2}q^{-6} + 3t^{-2}q^{-4} + 3t^{-1}q^{-4} + 3t^{-1}q^{-2} + 4q^{-2} + 4 + 3t^{1} + 3t^{1}q^{2} + 2t^{2}q^{2} + 2t^{2}q^{4} + 2t^{3}q^{4} + 2t^{3}q^{6} + t^{4}q^{6} + t^{4}q^{8}$ \\
 & $E_2$ & 4 & $t^{-4}q^{-10} + t^{-4}q^{-8} + t^{-2}q^{-6} + t^{-2}q^{-4}$ \\
${}^2a^{8}_{25}$ & $E_1$ & 72 & $t^{-8}q^{-20} + t^{-8}q^{-18} + 3t^{-7}q^{-18} + 3t^{-7}q^{-16} + 4t^{-6}q^{-16} + 4t^{-6}q^{-14} + 6t^{-5}q^{-14} + 6t^{-5}q^{-12} + 7t^{-4}q^{-12} + 7t^{-4}q^{-10} + 5t^{-3}q^{-10} + 5t^{-3}q^{-8} + 6t^{-2}q^{-8} + 6t^{-2}q^{-6} + 3t^{-1}q^{-6} + 3t^{-1}q^{-4} + q^{-4} + q^{-2}$ \\
 & $E_2$ & 12 & $t^{-7}q^{-18} + t^{-7}q^{-16} + t^{-6}q^{-16} + t^{-6}q^{-14} + t^{-5}q^{-14} + t^{-5}q^{-12} + 2t^{-4}q^{-12} + 2t^{-4}q^{-10} + t^{-2}q^{-8} + t^{-2}q^{-6}$ \\
${}^2a^{8}_{26}$ & $E_1$ & 60 & $t^{-4}q^{-8} + t^{-4}q^{-6} + 2t^{-3}q^{-6} + 2t^{-3}q^{-4} + 4t^{-2}q^{-4} + 4t^{-2}q^{-2} + 4t^{-1}q^{-2} + 4t^{-1} + 6 + 6q^{2} + 5t^{1}q^{2} + 5t^{1}q^{4} + 4t^{2}q^{4} + 4t^{2}q^{6} + 3t^{3}q^{6} + 3t^{3}q^{8} + t^{4}q^{8} + t^{4}q^{10}$ \\
 & $E_2$ & 4 & $t^{-4}q^{-8} + t^{-4}q^{-6} + t^{-2}q^{-4} + t^{-2}q^{-2}$ \\
${}^2a^{8}_{27}$ & $E_1$ & 68 & $t^{-5}q^{-12} + t^{-5}q^{-10} + 3t^{-4}q^{-10} + 3t^{-4}q^{-8} + 4t^{-3}q^{-8} + 4t^{-3}q^{-6} + 6t^{-2}q^{-6} + 6t^{-2}q^{-4} + 6t^{-1}q^{-4} + 6t^{-1}q^{-2} + 6q^{-2} + 6 + 4t^{1} + 4t^{1}q^{2} + 3t^{2}q^{2} + 3t^{2}q^{4} + t^{3}q^{4} + t^{3}q^{6}$ \\
 & $E_2$ & 4 & $t^{-2}q^{-6} + t^{-2}q^{-4} + q^{-2} + 1$ \\
${}^2a^{8}_{28}$ & $E_1$ & 52 & $t^{-8}q^{-20} + t^{-8}q^{-18} + 2t^{-7}q^{-18} + 2t^{-7}q^{-16} + 3t^{-6}q^{-16} + 3t^{-6}q^{-14} + 4t^{-5}q^{-14} + 4t^{-5}q^{-12} + 5t^{-4}q^{-12} + 5t^{-4}q^{-10} + 4t^{-3}q^{-10} + 4t^{-3}q^{-8} + 4t^{-2}q^{-8} + 4t^{-2}q^{-6} + 2t^{-1}q^{-6} + 2t^{-1}q^{-4} + q^{-4} + q^{-2}$ \\
 & $E_2$ & 4 & $t^{-4}q^{-12} + t^{-4}q^{-10} + t^{-2}q^{-8} + t^{-2}q^{-6}$ \\
${}^2a^{8}_{29}$ & $E_1$ & 44 & $t^{-8}q^{-22} + t^{-8}q^{-20} + 2t^{-7}q^{-20} + 2t^{-7}q^{-18} + 3t^{-6}q^{-18} + 3t^{-6}q^{-16} + 4t^{-5}q^{-16} + 4t^{-5}q^{-14} + 4t^{-4}q^{-14} + 4t^{-4}q^{-12} + 3t^{-3}q^{-12} + 3t^{-3}q^{-10} + 3t^{-2}q^{-10} + 3t^{-2}q^{-8} + t^{-1}q^{-8} + t^{-1}q^{-6} + q^{-6} + q^{-4}$ \\
 & $E_2$ & 4 & $t^{-2}q^{-10} + t^{-2}q^{-8} + q^{-6} + q^{-4}$ \\
${}^2a^{8}_{30}$ & $E_1$ & 32 & $t^{-8}q^{-22} + t^{-8}q^{-20} + t^{-7}q^{-20} + t^{-7}q^{-18} + 2t^{-6}q^{-18} + 2t^{-6}q^{-16} + 3t^{-5}q^{-16} + 3t^{-5}q^{-14} + 3t^{-4}q^{-14} + 3t^{-4}q^{-12} + 2t^{-3}q^{-12} + 2t^{-3}q^{-10} + 2t^{-2}q^{-10} + 2t^{-2}q^{-8} + t^{-1}q^{-8} + t^{-1}q^{-6} + q^{-6} + q^{-4}$ \\
 & $E_2$ & 4 & $t^{-4}q^{-14} + t^{-4}q^{-12} + t^{-2}q^{-10} + t^{-2}q^{-8}$ \\
${}^2a^{8}_{31}$ & $E_1$ & 48 & $t^{-8}q^{-20} + t^{-8}q^{-18} + t^{-7}q^{-18} + t^{-7}q^{-16} + 3t^{-6}q^{-16} + 3t^{-6}q^{-14} + 4t^{-5}q^{-14} + 4t^{-5}q^{-12} + 4t^{-4}q^{-12} + 4t^{-4}q^{-10} + 4t^{-3}q^{-10} + 4t^{-3}q^{-8} + 4t^{-2}q^{-8} + 4t^{-2}q^{-6} + 2t^{-1}q^{-6} + 2t^{-1}q^{-4} + q^{-4} + q^{-2}$ \\
 & $E_2$ & 4 & $t^{-6}q^{-16} + t^{-6}q^{-14} + t^{-4}q^{-12} + t^{-4}q^{-10}$ \\
${}^2a^{8}_{32}$ & $E_1$ & 16 & $t^{-8}q^{-24} + t^{-8}q^{-22} + t^{-7}q^{-22} + t^{-7}q^{-20} + t^{-6}q^{-20} + t^{-6}q^{-18} + t^{-5}q^{-18} + t^{-5}q^{-16} + t^{-4}q^{-16} + t^{-4}q^{-14} + t^{-3}q^{-14} + t^{-3}q^{-12} + t^{-2}q^{-12} + t^{-2}q^{-10} + q^{-8} + q^{-6}$ \\
 & $E_2$ & 4 & $t^{-2}q^{-12} + t^{-2}q^{-10} + q^{-8} + q^{-6}$ \\
${}^3a^{8}_{33}$ & $E_1$ & 56 & $t^{-6}q^{-15} + t^{-6}q^{-13} + t^{-5}q^{-13} + t^{-5}q^{-11} + 4t^{-4}q^{-11} + 4t^{-4}q^{-9} + 4t^{-3}q^{-9} + 4t^{-3}q^{-7} + 6t^{-2}q^{-7} + 6t^{-2}q^{-5} + 4t^{-1}q^{-5} + 4t^{-1}q^{-3} + 4q^{-3} + 4q^{-1} + 3t^{1}q^{-1} + 3t^{1}q^{1} + t^{2}q^{1} + t^{2}q^{3}$ \\
 & $E_2$ & 8 & $t^{-6}q^{-15} + t^{-6}q^{-13} + 2t^{-4}q^{-11} + 2t^{-4}q^{-9} + t^{-2}q^{-7} + t^{-2}q^{-5}$ \\
${}^3a^{8}_{34}$ & $E_1$ & 64 & $t^{-3}q^{-5} + t^{-3}q^{-3} + 3t^{-2}q^{-3} + 3t^{-2}q^{-1} + 3t^{-1}q^{-1} + 3t^{-1}q^{1} + 6q^{1} + 6q^{3} + 5t^{1}q^{3} + 5t^{1}q^{5} + 6t^{2}q^{5} + 6t^{2}q^{7} + 4t^{3}q^{7} + 4t^{3}q^{9} + 3t^{4}q^{9} + 3t^{4}q^{11} + t^{5}q^{11} + t^{5}q^{13}$ \\
 & $E_2$ & 8 & $t^{-2}q^{-3} + t^{-2}q^{-1} + 2q^{1} + 2q^{3} + t^{2}q^{5} + t^{2}q^{7}$ \\
${}^3a^{8}_{35}$ & $E_1$ & 56 & $t^{-8}q^{-21} + t^{-8}q^{-19} + 2t^{-7}q^{-19} + 2t^{-7}q^{-17} + 4t^{-6}q^{-17} + 4t^{-6}q^{-15} + 4t^{-5}q^{-15} + 4t^{-5}q^{-13} + 6t^{-4}q^{-13} + 6t^{-4}q^{-11} + 4t^{-3}q^{-11} + 4t^{-3}q^{-9} + 4t^{-2}q^{-9} + 4t^{-2}q^{-7} + 2t^{-1}q^{-7} + 2t^{-1}q^{-5} + q^{-5} + q^{-3}$ \\
 & $E_2$ & 8 & $t^{-6}q^{-17} + t^{-6}q^{-15} + 2t^{-4}q^{-13} + 2t^{-4}q^{-11} + t^{-2}q^{-9} + t^{-2}q^{-7}$ \\
${}^3a^{8}_{36}$ & $E_1$ & 40 & $t^{-2}q^{-1} + t^{-2}q^{1} + t^{-1}q^{1} + t^{-1}q^{3} + 3q^{3} + 3q^{5} + 2t^{1}q^{5} + 2t^{1}q^{7} + 4t^{2}q^{7} + 4t^{2}q^{9} + 3t^{3}q^{9} + 3t^{3}q^{11} + 3t^{4}q^{11} + 3t^{4}q^{13} + 2t^{5}q^{13} + 2t^{5}q^{15} + t^{6}q^{15} + t^{6}q^{17}$ \\
 & $E_2$ & 8 & $t^{-2}q^{-1} + t^{-2}q^{1} + 2q^{3} + 2q^{5} + t^{2}q^{7} + t^{2}q^{9}$ \\
${}^3a^{8}_{37}$ & $E_1$ & 72 & $t^{-4}q^{-9} + t^{-4}q^{-7} + 3t^{-3}q^{-7} + 3t^{-3}q^{-5} + 5t^{-2}q^{-5} + 5t^{-2}q^{-3} + 5t^{-1}q^{-3} + 5t^{-1}q^{-1} + 8q^{-1} + 8q^{1} + 5t^{1}q^{1} + 5t^{1}q^{3} + 5t^{2}q^{3} + 5t^{2}q^{5} + 3t^{3}q^{5} + 3t^{3}q^{7} + t^{4}q^{7} + t^{4}q^{9}$ \\
 & $E_2$ & 8 & $t^{-2}q^{-5} + t^{-2}q^{-3} + 2q^{-1} + 2q^{1} + t^{2}q^{3} + t^{2}q^{5}$ \\
${}^3a^{8}_{38}$ & $E_1$ & 64 & $t^{-4}q^{-9} + t^{-4}q^{-7} + 2t^{-3}q^{-7} + 2t^{-3}q^{-5} + 5t^{-2}q^{-5} + 5t^{-2}q^{-3} + 5t^{-1}q^{-3} + 5t^{-1}q^{-1} + 6q^{-1} + 6q^{1} + 5t^{1}q^{1} + 5t^{1}q^{3} + 5t^{2}q^{3} + 5t^{2}q^{5} + 2t^{3}q^{5} + 2t^{3}q^{7} + t^{4}q^{7} + t^{4}q^{9}$ \\
 & $E_2$ & 8 & $t^{-2}q^{-5} + t^{-2}q^{-3} + 2q^{-1} + 2q^{1} + t^{2}q^{3} + t^{2}q^{5}$ \\
${}^4a^{8}_{39}$ & $E_1$ & 64 & $t^{-8}q^{-20} + t^{-8}q^{-18} + t^{-7}q^{-18} + t^{-7}q^{-16} + 5t^{-6}q^{-16} + 5t^{-6}q^{-14} + 4t^{-5}q^{-14} + 4t^{-5}q^{-12} + 7t^{-4}q^{-12} + 7t^{-4}q^{-10} + 4t^{-3}q^{-10} + 4t^{-3}q^{-8} + 6t^{-2}q^{-8} + 6t^{-2}q^{-6} + 3t^{-1}q^{-6} + 3t^{-1}q^{-4} + q^{-4} + q^{-2}$ \\
 & $E_2$ & 16 & $t^{-8}q^{-20} + t^{-8}q^{-18} + 3t^{-6}q^{-16} + 3t^{-6}q^{-14} + 3t^{-4}q^{-12} + 3t^{-4}q^{-10} + t^{-2}q^{-8} + t^{-2}q^{-6}$ \\
${}^2n^{8}_{4}$ & $E_1$ & 24 & $2t^{-4}q^{-12} + 2t^{-4}q^{-10} + 2t^{-3}q^{-10} + 2t^{-3}q^{-8} + 2t^{-2}q^{-8} + 2t^{-2}q^{-6} + 2t^{-1}q^{-6} + 2t^{-1}q^{-4} + 2q^{-4} + 2q^{-2} + t^{1}q^{-2} + t^{1} + t^{2} + t^{2}q^{2}$ \\
 & $E_2$ & 20 & $t^{-4}q^{-12} + t^{-4}q^{-10} + t^{-3}q^{-10} + t^{-3}q^{-8} + 2t^{-2}q^{-8} + 2t^{-2}q^{-6} + 2t^{-1}q^{-6} + 2t^{-1}q^{-4} + 2q^{-4} + 2q^{-2} + t^{1}q^{-2} + t^{1} + t^{2} + t^{2}q^{2}$ \\
${}^3n^{8}_{6}$ & $E_1$ & 16 & $t^{-6}q^{-19} + 2t^{-6}q^{-17} + t^{-6}q^{-15} + t^{-5}q^{-17} + t^{-5}q^{-15} + t^{-4}q^{-15} + 2t^{-4}q^{-13} + t^{-4}q^{-11} + t^{-3}q^{-13} + t^{-3}q^{-11} + t^{-2}q^{-11} + t^{-2}q^{-9} + q^{-7} + q^{-5}$ \\
 & $E_2$ & 8 & $t^{-6}q^{-17} + t^{-6}q^{-15} + t^{-4}q^{-13} + t^{-4}q^{-11} + t^{-2}q^{-11} + t^{-2}q^{-9} + q^{-7} + q^{-5}$ \\
${}^3n^{8}_{7}$ & $E_1$ & 24 & $t^{-6}q^{-15} + t^{-6}q^{-13} + t^{-5}q^{-13} + t^{-5}q^{-11} + 2t^{-4}q^{-11} + 2t^{-4}q^{-9} + 2t^{-3}q^{-9} + 2t^{-3}q^{-7} + 3t^{-2}q^{-7} + 3t^{-2}q^{-5} + t^{-1}q^{-5} + t^{-1}q^{-3} + 2q^{-3} + 2q^{-1}$ \\
 & $E_2$ & 8 & $t^{-4}q^{-11} + t^{-4}q^{-9} + 2t^{-2}q^{-7} + 2t^{-2}q^{-5} + q^{-3} + q^{-1}$ \\
${}^3n^{8}_{8}$ & $E_1$ & 32 & $t^{-6}q^{-15} + t^{-6}q^{-13} + 2t^{-5}q^{-13} + 2t^{-5}q^{-11} + 3t^{-4}q^{-11} + 3t^{-4}q^{-9} + 2t^{-3}q^{-9} + 2t^{-3}q^{-7} + 4t^{-2}q^{-7} + 4t^{-2}q^{-5} + 2t^{-1}q^{-5} + 2t^{-1}q^{-3} + 2q^{-3} + 2q^{-1}$ \\
 & $E_2$ & 8 & $t^{-4}q^{-11} + t^{-4}q^{-9} + 2t^{-2}q^{-7} + 2t^{-2}q^{-5} + q^{-3} + q^{-1}$ \\
${}^3n^{8}_{9}$ & $E_1$ & 20 & $t^{-8}q^{-19} + t^{-8}q^{-17} + t^{-6}q^{-15} + t^{-6}q^{-13} + t^{-5}q^{-13} + t^{-5}q^{-11} + 2t^{-4}q^{-13} + 3t^{-4}q^{-11} + t^{-4}q^{-9} + t^{-3}q^{-11} + 2t^{-3}q^{-9} + t^{-3}q^{-7} + t^{-2}q^{-9} + t^{-2}q^{-7} + q^{-5} + q^{-3}$ \\
 & $E_2$ & 16 & $t^{-8}q^{-19} + t^{-8}q^{-17} + t^{-6}q^{-15} + t^{-6}q^{-13} + t^{-5}q^{-13} + t^{-5}q^{-11} + t^{-4}q^{-13} + 2t^{-4}q^{-11} + t^{-4}q^{-9} + t^{-3}q^{-9} + t^{-3}q^{-7} + t^{-2}q^{-9} + t^{-2}q^{-7} + q^{-5} + q^{-3}$ \\
 & $E_3$ & 8 & $t^{-8}q^{-19} + t^{-8}q^{-17} + t^{-6}q^{-15} + t^{-6}q^{-13} + t^{-4}q^{-13} + 2t^{-4}q^{-11} + t^{-4}q^{-9}$ \\
${}^4n^{8}_{10}$ & $E_1$ & 36 & $3 + 4q^{2} + q^{4} + 3t^{1}q^{2} + 3t^{1}q^{4} + 4t^{2}q^{4} + 4t^{2}q^{6} + t^{3}q^{6} + t^{3}q^{8} + 4t^{4}q^{8} + 4t^{4}q^{10} + t^{5}q^{10} + t^{5}q^{12} + t^{6}q^{12} + t^{6}q^{14}$ \\
 & $E_2$ & 16 & $1 + 2q^{2} + q^{4} + 2t^{2}q^{4} + 2t^{2}q^{6} + 3t^{4}q^{8} + 3t^{4}q^{10} + t^{6}q^{12} + t^{6}q^{14}$ \\
${}^4n^{8}_{11}$ & $E_1$ & 24 & $t^{-4}q^{-8} + t^{-4}q^{-6} + t^{-2}q^{-4} + t^{-2}q^{-2} + t^{-1}q^{-2} + t^{-1} + 3q^{-2} + 6 + 3q^{2} + t^{1} + t^{1}q^{2} + t^{2}q^{2} + t^{2}q^{4} + t^{4}q^{6} + t^{4}q^{8}$ \\
 & $E_2$ & 16 & $t^{-4}q^{-8} + t^{-4}q^{-6} + t^{-2}q^{-4} + t^{-2}q^{-2} + 2q^{-2} + 4 + 2q^{2} + t^{2}q^{2} + t^{2}q^{4} + t^{4}q^{6} + t^{4}q^{8}$ \\
\bottomrule
\end{longtable}
\end{center}
\normalsize

\footnotesize 
\begin{center}
\begin{longtable}{llllllllll}
\caption{Computations of $\Sp(L)$ for links with 10 or fewer crossings.  In these examples, $b(L) \le b'_{lk}(L)$; ${}^*$ indicates cases where $b'_{lk}(L) = b(L)$.}\label{spboundcomptab}\\*
\toprule
link $L$ & $\Sp(L)$ & link $L$ & $\Sp(L)$ & link $L$ & $\Sp(L)$ & link $L$ & $\Sp(L)$ & link $L$ & $\Sp(L)$ \\
\midrule
 ${}^{2}a^{5}_{3}$  & $2^*$ & ${}^{2}a^{6}_{4}$  & $2^*$ & ${}^{2}a^{6}_{5}$  & $3$ & ${}^{2}a^{6}_{6}$  & $3^*$ & ${}^{3}a^{6}_{7}$  & $2^*$ \\
 ${}^{3}a^{6}_{8}$  & $3$ & ${}^{3}n^{6}_{9}$  & $3$ & ${}^{2}a^{7}_{8}$  & $2^*$ & ${}^{2}a^{7}_{9}$  & $2^*$ & ${}^{2}a^{7}_{10}$  & $2^*$ \\
 ${}^{2}a^{7}_{11}$  & $2^*$ & ${}^{3}a^{7}_{14}$  & $3$ & ${}^{2}n^{7}_{15}$  & $2$ & ${}^{2}n^{7}_{16}$  & $2^*$ & ${}^{2}a^{8}_{19}$  & $2^*$ \\
 ${}^{2}a^{8}_{20}$  & $2^*$ & ${}^{2}a^{8}_{21}$  & $2^*$ & ${}^{2}a^{8}_{22}$  & $2^*$ & ${}^{2}a^{8}_{23}$  & $2^*$ & ${}^{2}a^{8}_{24}$  & $2^*$ \\
 ${}^{2}a^{8}_{25}$  & $2^*$ & ${}^{2}a^{8}_{28}$  & $3$ & ${}^{2}a^{8}_{29}$  & $3^*$ & ${}^{2}a^{8}_{30}$  & $4$ & ${}^{2}a^{8}_{31}$  & $4$ \\
 ${}^{2}a^{8}_{32}$  & $4$ & ${}^{3}a^{8}_{33}$  & $3$ & ${}^{3}a^{8}_{35}$  & $4$ & ${}^{3}a^{8}_{36}$  & $4$ & ${}^{3}a^{8}_{37}$  & $2$--$4^*$ \\
 ${}^{3}a^{8}_{38}$  & $4$ & ${}^{4}a^{8}_{39}$  & $4$ & ${}^{2}n^{8}_{43}$  & $2^*$ & ${}^{2}n^{8}_{44}$  & $2$ & ${}^{3}n^{8}_{45}$  & $4$ \\
 ${}^{3}n^{8}_{46}$  & $4$ & ${}^{3}n^{8}_{47}$  & $2^*$ & ${}^{3}n^{8}_{48}$  & $4$ & ${}^{4}n^{8}_{49}$  & $4$ & ${}^{4}n^{8}_{50}$  & $4$ \\
 ${}^{2}a^{9}_{42}$  & $2^*$ & ${}^{2}a^{9}_{43}$  & $2^*$ & ${}^{2}a^{9}_{44}$  & $2^*$ & ${}^{2}a^{9}_{45}$  & $2^*$ & ${}^{2}a^{9}_{46}$  & $2^*$ \\
 ${}^{2}a^{9}_{47}$  & $2^*$ & ${}^{2}a^{9}_{48}$  & $2^*$ & ${}^{2}a^{9}_{49}$  & $2^*$ & ${}^{2}a^{9}_{50}$  & $2^*$ & ${}^{2}a^{9}_{51}$  & $2^*$ \\
 ${}^{2}a^{9}_{52}$  & $2^*$ & ${}^{2}a^{9}_{53}$  & $2^*$ & ${}^{2}a^{9}_{54}$  & $2^*$ & ${}^{2}a^{9}_{55}$  & $2^*$ & ${}^{2}a^{9}_{56}$  & $2^*$ \\
 ${}^{2}a^{9}_{57}$  & $2^*$ & ${}^{2}a^{9}_{58}$  & $2^*$ & ${}^{2}a^{9}_{59}$  & $2^*$ & ${}^{2}a^{9}_{60}$  & $2^*$ & ${}^{2}a^{9}_{64}$  & $3$ \\
 ${}^{2}a^{9}_{69}$  & $3^*$ & ${}^{2}a^{9}_{73}$  & $3^*$ & ${}^{2}a^{9}_{74}$  & $3^*$ & ${}^{2}a^{9}_{75}$  & $2$--$4^*$ & ${}^{2}a^{9}_{76}$  & $2$--$4^*$ \\
 ${}^{2}a^{9}_{77}$  & $2$--$4^*$ & ${}^{2}a^{9}_{78}$  & $2$--$4^*$ & ${}^{2}a^{9}_{79}$  & $2$--$4^*$ & ${}^{2}a^{9}_{80}$  & $2$--$4^*$ & ${}^{2}a^{9}_{81}$  & $2$--$4^*$ \\
 ${}^{2}a^{9}_{82}$  & $2$--$4^*$ & ${}^{2}a^{9}_{83}$  & $2$--$4^*$ & ${}^{3}a^{9}_{84}$  & $3$ & ${}^{3}a^{9}_{85}$  & $3$ & ${}^{3}a^{9}_{86}$  & $3$ \\
 ${}^{3}a^{9}_{88}$  & $4$ & ${}^{3}a^{9}_{89}$  & $4$ & ${}^{3}a^{9}_{90}$  & $4$ & ${}^{3}a^{9}_{91}$  & $4$ & ${}^{3}a^{9}_{92}$  & $4$ \\
 ${}^{3}a^{9}_{93}$  & $4$ & ${}^{3}a^{9}_{94}$  & $2$--$4^*$ & ${}^{3}a^{9}_{95}$  & $4$ & ${}^{4}a^{9}_{96}$  & $4$ & ${}^{2}n^{9}_{105}$  & $2$ \\
 ${}^{2}n^{9}_{106}$  & $2^*$ & ${}^{2}n^{9}_{107}$  & $2$ & ${}^{2}n^{9}_{108}$  & $2$ & ${}^{2}n^{9}_{109}$  & $2^*$ & ${}^{2}n^{9}_{110}$  & $2^*$ \\
 ${}^{2}n^{9}_{111}$  & $2^*$ & ${}^{2}n^{9}_{112}$  & $2^*$ & ${}^{2}n^{9}_{113}$  & $2$ & ${}^{2}n^{9}_{114}$  & $2^*$ & ${}^{2}n^{9}_{115}$  & $2^*$ \\
 ${}^{2}n^{9}_{116}$  & $2^*$ & ${}^{2}n^{9}_{119}$  & $3$ & ${}^{2}n^{9}_{120}$  & $3$ & ${}^{2}n^{9}_{122}$  & $4^*$ & ${}^{2}n^{9}_{123}$  & $4$ \\
 ${}^{3}n^{9}_{124}$  & $3$ & ${}^{3}n^{9}_{125}$  & $3$ & ${}^{3}n^{9}_{126}$  & $3$ & ${}^{3}n^{9}_{127}$  & $4$ & ${}^{3}n^{9}_{128}$  & $4$ \\
 ${}^{3}n^{9}_{129}$  & $2^*$ & ${}^{3}n^{9}_{130}$  & $4$ & ${}^{3}n^{9}_{131}$  & $4$ & ${}^{3}n^{9}_{132}$  & $4$ & ${}^{2}a^{10}_{124}$  & $2^*$ \\
 ${}^{2}a^{10}_{125}$  & $2^*$ & ${}^{2}a^{10}_{126}$  & $2^*$ & ${}^{2}a^{10}_{127}$  & $2^*$ & ${}^{2}a^{10}_{128}$  & $2^*$ & ${}^{2}a^{10}_{129}$  & $2^*$ \\
 ${}^{2}a^{10}_{130}$  & $2^*$ & ${}^{2}a^{10}_{131}$  & $2^*$ & ${}^{2}a^{10}_{132}$  & $2^*$ & ${}^{2}a^{10}_{133}$  & $2^*$ & ${}^{2}a^{10}_{134}$  & $2^*$ \\
 ${}^{2}a^{10}_{135}$  & $2^*$ & ${}^{2}a^{10}_{136}$  & $2^*$ & ${}^{2}a^{10}_{137}$  & $2^*$ & ${}^{2}a^{10}_{138}$  & $2^*$ & ${}^{2}a^{10}_{139}$  & $2^*$ \\
 ${}^{2}a^{10}_{140}$  & $2^*$ & ${}^{2}a^{10}_{141}$  & $2^*$ & ${}^{2}a^{10}_{142}$  & $2^*$ & ${}^{2}a^{10}_{143}$  & $2^*$ & ${}^{2}a^{10}_{144}$  & $2^*$ \\
 ${}^{2}a^{10}_{145}$  & $2^*$ & ${}^{2}a^{10}_{146}$  & $2^*$ & ${}^{2}a^{10}_{147}$  & $2^*$ & ${}^{2}a^{10}_{148}$  & $2^*$ & ${}^{2}a^{10}_{149}$  & $2^*$ \\
 ${}^{2}a^{10}_{150}$  & $2^*$ & ${}^{2}a^{10}_{151}$  & $2^*$ & ${}^{2}a^{10}_{152}$  & $2^*$ & ${}^{2}a^{10}_{153}$  & $2^*$ & ${}^{2}a^{10}_{154}$  & $2^*$ \\
 ${}^{2}a^{10}_{155}$  & $2^*$ & ${}^{2}a^{10}_{156}$  & $2^*$ & ${}^{2}a^{10}_{157}$  & $2^*$ & ${}^{2}a^{10}_{158}$  & $2^*$ & ${}^{2}a^{10}_{159}$  & $2^*$ \\
 ${}^{2}a^{10}_{160}$  & $2^*$ & ${}^{2}a^{10}_{161}$  & $2^*$ & ${}^{2}a^{10}_{162}$  & $2^*$ & ${}^{2}a^{10}_{163}$  & $2^*$ & ${}^{2}a^{10}_{164}$  & $2^*$ \\
 ${}^{2}a^{10}_{165}$  & $2^*$ & ${}^{2}a^{10}_{166}$  & $2^*$ & ${}^{2}a^{10}_{167}$  & $2^*$ & ${}^{2}a^{10}_{168}$  & $2^*$ & ${}^{2}a^{10}_{169}$  & $2^*$ \\
 ${}^{2}a^{10}_{170}$  & $2^*$ & ${}^{2}a^{10}_{171}$  & $2^*$ & ${}^{2}a^{10}_{172}$  & $2^*$ & ${}^{2}a^{10}_{173}$  & $2^*$ & ${}^{2}a^{10}_{181}$  & $3$ \\
 ${}^{2}a^{10}_{190}$  & $3^*$ & ${}^{2}a^{10}_{195}$  & $3$ & ${}^{2}a^{10}_{196}$  & $3$ & ${}^{2}a^{10}_{197}$  & $3^*$ & ${}^{2}a^{10}_{198}$  & $3^*$ \\
 ${}^{2}a^{10}_{200}$  & $3$ & ${}^{2}a^{10}_{201}$  & $3^*$ & ${}^{2}a^{10}_{204}$  & $3$ & ${}^{2}a^{10}_{206}$  & $3$ & ${}^{2}a^{10}_{208}$  & $3$ \\
 ${}^{2}a^{10}_{210}$  & $3^*$ & ${}^{2}a^{10}_{211}$  & $2$--$4^*$ & ${}^{2}a^{10}_{212}$  & $2$--$4^*$ & ${}^{2}a^{10}_{213}$  & $2$--$4^*$ & ${}^{2}a^{10}_{214}$  & $2$--$4^*$ \\
 ${}^{2}a^{10}_{215}$  & $2$--$4^*$ & ${}^{2}a^{10}_{216}$  & $2$--$4^*$ & ${}^{2}a^{10}_{217}$  & $4$ & ${}^{2}a^{10}_{218}$  & $2$--$4^*$ & ${}^{2}a^{10}_{219}$  & $4$ \\
 ${}^{2}a^{10}_{220}$  & $4$ & ${}^{2}a^{10}_{221}$  & $4$ & ${}^{2}a^{10}_{222}$  & $2$--$4^*$ & ${}^{2}a^{10}_{223}$  & $4$ & ${}^{2}a^{10}_{224}$  & $4$ \\
 ${}^{2}a^{10}_{225}$  & $4$ & ${}^{2}a^{10}_{226}$  & $2$--$4^*$ & ${}^{2}a^{10}_{227}$  & $2$--$4^*$ & ${}^{2}a^{10}_{228}$  & $4$ & ${}^{2}a^{10}_{229}$  & $2$--$4^*$ \\
 ${}^{2}a^{10}_{230}$  & $4$ & ${}^{2}a^{10}_{231}$  & $2^*$ & ${}^{2}a^{10}_{232}$  & $2^*$ & ${}^{2}a^{10}_{233}$  & $2^*$ & ${}^{2}a^{10}_{234}$  & $2$--$4^*$ \\
 ${}^{2}a^{10}_{235}$  & $2$--$4^*$ & ${}^{2}a^{10}_{236}$  & $2$--$4^*$ & ${}^{2}a^{10}_{237}$  & $5$ & ${}^{2}a^{10}_{238}$  & $5$ & ${}^{2}a^{10}_{239}$  & $5$ \\
 ${}^{2}a^{10}_{240}$  & $5$ & ${}^{2}a^{10}_{241}$  & $5$ & ${}^{2}a^{10}_{242}$  & $5$ & ${}^{2}a^{10}_{243}$  & $5$ & ${}^{2}a^{10}_{244}$  & $5$ \\
 ${}^{3}a^{10}_{245}$  & $3$ & ${}^{3}a^{10}_{246}$  & $3$ & ${}^{3}a^{10}_{247}$  & $3$ & ${}^{3}a^{10}_{248}$  & $3$ & ${}^{3}a^{10}_{249}$  & $3$ \\
 ${}^{3}a^{10}_{251}$  & $4$ & ${}^{3}a^{10}_{252}$  & $4$ & ${}^{3}a^{10}_{253}$  & $4$ & ${}^{3}a^{10}_{254}$  & $4$ & ${}^{3}a^{10}_{255}$  & $4$ \\
 ${}^{3}a^{10}_{256}$  & $4$ & ${}^{3}a^{10}_{257}$  & $4$ & ${}^{3}a^{10}_{258}$  & $4$ & ${}^{3}a^{10}_{259}$  & $2$--$4^*$ & ${}^{3}a^{10}_{260}$  & $2$--$4^*$ \\
 ${}^{3}a^{10}_{261}$  & $2$--$4^*$ & ${}^{3}a^{10}_{262}$  & $4$ & ${}^{3}a^{10}_{263}$  & $2$--$4^*$ & ${}^{3}a^{10}_{264}$  & $2$--$4^*$ & ${}^{3}a^{10}_{265}$  & $5$ \\
 ${}^{3}a^{10}_{266}$  & $5$ & ${}^{3}a^{10}_{267}$  & $5$ & ${}^{3}a^{10}_{268}$  & $5$ & ${}^{3}a^{10}_{269}$  & $5$ & ${}^{3}a^{10}_{270}$  & $3$--$5$ \\
 ${}^{3}a^{10}_{271}$  & $3$--$5$ & ${}^{3}a^{10}_{272}$  & $4$ & ${}^{3}a^{10}_{273}$  & $4$ & ${}^{3}a^{10}_{274}$  & $4$ & ${}^{3}a^{10}_{275}$  & $5$ \\
 ${}^{3}a^{10}_{276}$  & $5$ & ${}^{3}a^{10}_{277}$  & $4$ & ${}^{3}a^{10}_{278}$  & $4$ & ${}^{3}a^{10}_{279}$  & $2$--$4^*$ & ${}^{3}a^{10}_{280}$  & $2$--$4^*$ \\
 ${}^{3}a^{10}_{281}$  & $2$--$4^*$ & ${}^{3}a^{10}_{282}$  & $5$ & ${}^{3}a^{10}_{283}$  & $5$ & ${}^{3}a^{10}_{284}$  & $5$ & ${}^{3}a^{10}_{285}$  & $3$--$5$ \\
 ${}^{3}a^{10}_{287}$  & $5$ & ${}^{4}a^{10}_{288}$  & $4$ & ${}^{4}a^{10}_{289}$  & $5$ & ${}^{4}a^{10}_{290}$  & $5$ & ${}^{4}a^{10}_{291}$  & $5$ \\
 ${}^{4}a^{10}_{293}$  & $2$--$4^*$ & ${}^{4}a^{10}_{294}$  & $5$ & ${}^{4}a^{10}_{295}$  & $5$ & ${}^{4}a^{10}_{296}$  & $5$ & ${}^{5}a^{10}_{297}$  & $5$ \\
 ${}^{2}n^{10}_{340}$  & $2$ & ${}^{2}n^{10}_{341}$  & $2^*$ & ${}^{2}n^{10}_{342}$  & $2$ & ${}^{2}n^{10}_{343}$  & $2^*$ & ${}^{2}n^{10}_{344}$  & $2$ \\
 ${}^{2}n^{10}_{345}$  & $2^*$ & ${}^{2}n^{10}_{346}$  & $2^*$ & ${}^{2}n^{10}_{347}$  & $2$ & ${}^{2}n^{10}_{348}$  & $2$ & ${}^{2}n^{10}_{349}$  & $2$ \\
 ${}^{2}n^{10}_{350}$  & $2^*$ & ${}^{2}n^{10}_{351}$  & $2^*$ & ${}^{2}n^{10}_{352}$  & $2$ & ${}^{2}n^{10}_{353}$  & $2^*$ & ${}^{2}n^{10}_{354}$  & $2^*$ \\
 ${}^{2}n^{10}_{355}$  & $2^*$ & ${}^{2}n^{10}_{356}$  & $2^*$ & ${}^{2}n^{10}_{357}$  & $2$ & ${}^{2}n^{10}_{358}$  & $2^*$ & ${}^{2}n^{10}_{359}$  & $2^*$ \\
 ${}^{2}n^{10}_{360}$  & $2^*$ & ${}^{2}n^{10}_{361}$  & $2^*$ & ${}^{2}n^{10}_{362}$  & $2^*$ & ${}^{2}n^{10}_{363}$  & $2^*$ & ${}^{2}n^{10}_{364}$  & $2$ \\
 ${}^{2}n^{10}_{365}$  & $2^*$ & ${}^{2}n^{10}_{366}$  & $2^*$ & ${}^{2}n^{10}_{367}$  & $2^*$ & ${}^{2}n^{10}_{368}$  & $2^*$ & ${}^{2}n^{10}_{369}$  & $2^*$ \\
 ${}^{2}n^{10}_{370}$  & $2^*$ & ${}^{2}n^{10}_{371}$  & $2$ & ${}^{2}n^{10}_{372}$  & $2^*$ & ${}^{2}n^{10}_{373}$  & $2^*$ & ${}^{2}n^{10}_{374}$  & $2^*$ \\
 ${}^{2}n^{10}_{375}$  & $2$ & ${}^{2}n^{10}_{376}$  & $2^*$ & ${}^{2}n^{10}_{377}$  & $2^*$ & ${}^{2}n^{10}_{378}$  & $2^*$ & ${}^{2}n^{10}_{385}$  & $3$ \\
 ${}^{2}n^{10}_{386}$  & $3$ & ${}^{2}n^{10}_{388}$  & $3^*$ & ${}^{2}n^{10}_{390}$  & $3$ & ${}^{2}n^{10}_{392}$  & $3$ & ${}^{2}n^{10}_{393}$  & $2$--$4^*$ \\
 ${}^{2}n^{10}_{394}$  & $4$ & ${}^{2}n^{10}_{395}$  & $2$--$4^*$ & ${}^{2}n^{10}_{396}$  & $2$--$4^*$ & ${}^{2}n^{10}_{397}$  & $2^*$ & ${}^{2}n^{10}_{398}$  & $2^*$ \\
 ${}^{2}n^{10}_{399}$  & $2^*$ & ${}^{2}n^{10}_{400}$  & $2^*$ & ${}^{2}n^{10}_{401}$  & $2^*$ & ${}^{2}n^{10}_{402}$  & $2$--$4^*$ & ${}^{2}n^{10}_{403}$  & $2$--$4^*$ \\
 ${}^{3}n^{10}_{404}$  & $3$ & ${}^{3}n^{10}_{405}$  & $3$ & ${}^{3}n^{10}_{406}$  & $3$ & ${}^{3}n^{10}_{407}$  & $4$ & ${}^{3}n^{10}_{408}$  & $4$ \\
 ${}^{3}n^{10}_{409}$  & $2^*$ & ${}^{3}n^{10}_{410}$  & $4$ & ${}^{3}n^{10}_{411}$  & $4$ & ${}^{3}n^{10}_{412}$  & $2$--$4^*$ & ${}^{3}n^{10}_{413}$  & $5$ \\
 ${}^{3}n^{10}_{414}$  & $5$ & ${}^{3}n^{10}_{415}$  & $3^*$ & ${}^{3}n^{10}_{416}$  & $5$ & ${}^{3}n^{10}_{417}$  & $5$ & ${}^{3}n^{10}_{418}$  & $3^*$ \\
 ${}^{3}n^{10}_{419}$  & $4$ & ${}^{3}n^{10}_{420}$  & $5$ & ${}^{3}n^{10}_{421}$  & $5$ & ${}^{3}n^{10}_{422}$  & $4$ & ${}^{3}n^{10}_{423}$  & $5$ \\
 ${}^{3}n^{10}_{424}$  & $3$--$5$ & ${}^{3}n^{10}_{425}$  & $3$--$5$ & ${}^{3}n^{10}_{426}$  & $5$ & ${}^{3}n^{10}_{427}$  & $4$ & ${}^{3}n^{10}_{428}$  & $4$ \\
 ${}^{3}n^{10}_{429}$  & $4$ & ${}^{3}n^{10}_{430}$  & $4$ & ${}^{3}n^{10}_{431}$  & $5$ & ${}^{3}n^{10}_{432}$  & $5$ & ${}^{3}n^{10}_{433}$  & $5$ \\
 ${}^{3}n^{10}_{434}$  & $5$ & ${}^{4}n^{10}_{435}$  & $5$ & ${}^{4}n^{10}_{436}$  & $5$ & ${}^{4}n^{10}_{437}$  & $5$ & ${}^{4}n^{10}_{438}$  & $5$ \\
 ${}^{4}n^{10}_{439}$  & $3$--$5$ & ${}^{4}n^{10}_{440}$  & $5$ & ${}^{4}n^{10}_{441}$  & $5$ & ${}^{4}n^{10}_{442}$  & $3$--$5$ & ${}^{4}n^{10}_{443}$  & $5$ \\
 ${}^{4}n^{10}_{444}$  & $5$ & ${}^{4}n^{10}_{445}$  & $2$--$4^*$ & ${}^{4}n^{10}_{446}$  & $2^*$ & ${}^{4}n^{10}_{447}$  & $5$ & ${}^{4}n^{10}_{448}$  & $5$ \\
 ${}^{4}n^{10}_{449}$  & $5$ & ${}^{4}n^{10}_{450}$  & $5$ & ${}^{5}n^{10}_{451}$  & $5$ & ${}^{5}n^{10}_{452}$  & $5$ \\
\bottomrule
\end{longtable}
\end{center}
\normalsize

\end{document}